\tikzstyle{var} = [rectangle, minimum width=1cm, minimum height=0.5cm, text centered, draw=black, fill=white!30]
\tikzstyle{output} = [rectangle, text centered, draw=black, fill=gray!20]
\tikzstyle{input} = [rectangle, text centered, draw=black, fill=green!20]
\tikzstyle{arrow} = [thick,->,>=stealth]
\theoremstyle{definition}
\newtheorem{theorem}{Theorem}[subsection]
\newtheorem{corollary}[theorem]{Corollary}
\newtheorem{definition}[theorem]{Definition}
\newtheorem{digression}[theorem]{Digression}
\newtheorem{example}[theorem]{Example}
\newtheorem{exercise}[theorem]{Exercise}
\newtheorem{lemma}[theorem]{Lemma}
\newtheorem{notation}[theorem]{Notation}
\newtheorem{proposition}[theorem]{Proposition}
\newtheorem{remark}[theorem]{Remark}
\newtheorem{slogan}[theorem]{Slogan}
\newcommand{\thistheoremname}{}
\newtheorem{genericthm}[theorem]{\thistheoremname}
\newenvironment{customenvironment}[1]
  {\renewcommand{\thistheoremname}{#1}%
   \begin{genericthm}}
  {\end{genericthm}}
\renewcommand{\setminus}{\smallsetminus}
\newcommand{\minus}{\smallsetminus}
\newcommand{\iso}{\simeq}
\newcommand{\lrangle}[1]{\langle #1 \rangle}
\newcommand{\Aut}{\text{Aut}}
\newcommand{\Bez}{\text{B\'ez}}
\newcommand{\Bl}{\text{Bl}}
\renewcommand{\char}{\text{char}}
\newcommand{\EKL}{\text{EKL}}
\newcommand{\Fix}{\text{Fix}}
\newcommand{\Fun}{\text{Fun}}
\newcommand{\GW}{\text{GW}}
\newcommand{\Gr}{\text{Gr}}
\newcommand{\grad}{\text{grad}}
\newcommand{\Hom}{\text{Hom}}
\providecommand{\ind}{\text{ind}}
\newcommand{\Jac}{\text{Jac}}
\newcommand{\Map}{\text{Map}}
\newcommand{\op}{\text{op}}
\newcommand{\Pre}{\text{Pre}}
\newcommand{\Proj}{\text{Proj}\hspace{0.1em}}
\newcommand{\PSL}{\text{PSL}}
\newcommand{\rank}{\text{rank}}
\newcommand{\s}{\text{s}}
\newcommand{\sgn}{\text{sgn}}
\newcommand{\Sh}{\text{Sh}}
\newcommand{\sig}{\text{sig}}
\newcommand{\Sm}{\textit{Sm}}
\newcommand{\SO}{\text{SO}}
\newcommand{\Spc}{\text{Spc}}
\newcommand{\Spec}{\text{Spec}\hspace{0.1em}}
\newcommand{\Spin}{\text{Spin}}
\providecommand{\spn}{\text{span}}
\newcommand{\sSet}{\textit{sSet}}
\newcommand{\Sym}{\text{Sym}}
\newcommand{\Th}{\text{Th}}
\newcommand{\Tr}{\text{Tr}}
\newcommand{\type}{\text{type}}
\newcommand{\A}{\mathbb{A}}
\newcommand{\C}{\mathbb{C}}
\newcommand{\F}{\mathbb{F}}
\newcommand{\Q}{\mathbb{Q}}
\newcommand{\R}{\mathbb{R}}
\newcommand{\Z}{\mathbb{Z}}
\providecommand{\CP}{\mathbb{C}\mathrm{P}}
\renewcommand{\O}{\mathcal{O}}
\renewcommand{\H}{\textbf{H}}
\let\emptyset\varnothing
\let\phi\varphi
\let\bar\overline
\let\tilde\widetilde
\newcommand{\gw}[1]{\left\langle #1 \right\rangle}
\newcommand{\G}{\mathbb{G}}
\renewcommand{\P}{\mathbb{P}}
\newcommand{\po}{\arrow[ul,phantom,"\ulcorner" very near start]}
\newcommand{\xto}[1]{\xrightarrow{#1}}
\newcommand{\from}{\leftarrow}
\newcommand{\xfrom}[1]{\overset{#1}{\leftarrow}}
\newcommand{\superimpose}[2]{%
  {\ooalign{$#1\@firstoftwo#2$\cr\hfil$#1\@secondoftwo#2$\hfil\cr}}}
\newcommand{\smallslash}{\mbox{\tiny/}}
\newcommand{\clhook}{\mathrel{\raisebox{0.1em}{$\mathrel{\mathpalette\superimpose{{\hspace{0.1cm}\vspace{0.1em}\smallslash}{\hookrightarrow}}}$}}}
\newcommand{\ohook}{\mathrel{\raisebox{0.03em}{$\mathrel{\mathpalette\superimpose{{\hspace{0.1cm}\vspace{0.03em}\mbox{\small$\circ$}}{\hookrightarrow}}}$}}}
\providecommand{\RP}{\mathbb{R}\textbf{P}}
\providecommand{\sPre}{\text{sPre}}
\providecommand{\smashprod}{\wedge}
\providecommand{\Top}{\text{Top}}
\renewcommand{\top}{\text{top}}
\providecommand{\hocolim}{\text{hocolim}}
\providecommand{\PGL}{\text{PGL}}
\providecommand{\Nis}{\text{Nis}}
\begin{document}

\title{An Introduction to $\A^1$-Enumerative Geometry \\ \large Based on lectures by Kirsten Wickelgren delivered at LMS-CMI ``Homotopy Theory and Arithmetic Geometry: Motivic and Diophantine Aspects''}
\author{Thomas Brazelton}
\date{2020}
\maketitle

\begin{abstract}
\noindent We provide an expository introduction to \textit{$\A^1$-enumerative geometry}, which uses the machinery of $\A^1$-homotopy theory to enrich classical enumerative geometry questions over a broader range of fields. Included is a discussion of enriched local degrees of morphisms of smooth schemes, following Morel, $\A^1$-Milnor numbers, as well as various computational tools and recent examples.
\end{abstract}

\tableofcontents{}

\section*{Introduction}
In the late 1990's Fabien Morel and Vladimir Voevodsky investigated the question of whether techniques from algebraic topology, particularly homotopy theory, could be applied to study varieties and schemes, using the affine line $\A^1$ rather than the interval $[0,1]$ as a parametrizing object. This idea was influenced by a number of preceding papers, including work of Karoubi and Villamayor \cite{Karoubi} and Weibel \cite{Weibel} on $K$-theory, and Jardine's work on algebraic homotopy theory \cite{Jardine1,Jardine2}. In work with Suslin developing an algebraic analog of singular cohomology which was $\A^1$-invariant \cite{Suslin-Voevodsky}, Voevodsky laid out what he considered to be the starting point of a homotopy theory of schemes parametrized by the affine line \cite{Voevodsky-ICM}.
This relied upon Quillen's theory of model categories \cite{Quillen}, which provided the abstract framework needed to develop homotopy theory in broader contexts.
Following this work, Morel \cite{Morel-htpy-schemes} and Voevodsky \cite{Voevodsky-ICM} constructed equivalent unstable $\A^1$-homotopy categories, laying the groundwork for their seminal paper \cite{MV99} which marked the genesis of \textit{$\A^1$-homotopy theory}.
Since its inception, this field of mathematics has seen far-reaching applications, perhaps most notably Voevodsky's resolution of the Bloch-Kato conjecture, a classical problem from number theory \cite{Voev-bloch-kato}.

The machinery of $\A^1$-homotopy theory works over an arbitrary field $k$ (in fact over arbitrary schemes, and even richer mathematical objects), allowing enrichments of classical problems which have only been explored over the real and complex numbers. Recent work in this area has generalized classical enumerative problems over wider ranges of fields, forming a body of work which we are referring to as \textit{$\A^1$-enumerative geometry}.

Beginning with a recollection of the topological degree for a morphism between manifolds in Section~\ref{sec:top-deg}, we pursue an idea of Barge and Lannes to produce a notion of degree valued in the Grothendieck--Witt ring of a field $k$, defined in Section~\ref{sec:GW}. We produce such a naive degree for endomorphisms of the projective line in Section~\ref{sec:lannes}, however in order to produce such a degree for smooth $n$-schemes in general, we will need to develop some machinery from $\A^1$-homotopy theory. A brief detour is taken to establish the setting in which one can study motivic spaces, defining the unstable motivic homotopy category in Section~\ref{sec:unstable-cat}, and establishing some basic, albeit important computations involving colimits of motivic spaces in Section~\ref{sec:colimits}. This discussion culminates in the purity theorem of Morel and Voevodsky, stated in Section~\ref{sec:purity}, which is requisite background for defining the local $\A^1$-degree following Morel.

In Section~\ref{sec:enum-geo}, we are finally able to define the local $\A^1$-degree of a morphism of schemes, which is a powerful, versatile tool in enriching enumerative geometry problems over arbitrary fields. At this point, we survey a number of recent results in $\A^1$-enumerative geometry. We discuss the Eisenbud--Khimshiashvili--Levine signature formula in Sections~\ref{sec:EKL} and \ref{sec:EKL-proof}, and we see its relation to the $\A^1$-degree, as proved in \cite{KW-EKL}. An enriched version of the $\A^1$-Milnor number is provided in Section~\ref{sec:Milnor-no}, which provides an enriched count of nodes on a hypersurface, following \cite{KW-Milnor-Number}. The problem of counting lines on a cubic surface, and the associated enriched results \cite{KW-arithmetic-count} are discussed in Section~\ref{sec:lines}. Finally, in Section~\ref{sec:four-lines} we provide an arithmetic count of lines meeting four lines in three-space, following \cite{Padma-Kirsten}.

Throughout these conference proceedings, various exercises (most of which were provided by Wickelgren in her 2018 lectures) are included. Detailed solutions may be found on the author's website.

\section*{Acknowledgements}
This expository paper is based around lectures by and countless conversations with Kirsten Wickelgren, who introduced me to this area of mathematics and has provided endless guidance and support along the journey. I am grateful to Mona Merling, who has shaped much of my mathematical understanding, and to Stephen McKean and Sabrina Pauli for many enlightening mathematical discussions related to $\A^1$-enumerative geometry. I am also grateful to Frank Neumann and Ambrus Pal for their work organizing these conference proceedings. Finally, I would like to thank the anonymous referee for their thoughtful feedback, which greatly improved this paper.

The author is supported by an NSF Graduate Research Fellowship, under grant number DGE-1845298. 

\section{Preliminaries}

\subsection{Enriching the topological degree}\label{sec:top-deg}

A continuous map $f: S^n \to S^n$ from the $n$-sphere to itself induces a homomorphism on the top homology group $f_\ast: H_n (S^n) \to H_n(S^n)$, which is of the form $f_\ast(x) = dx$ for some $d\in\Z$. This integer $d$ defines the \textit{(global) degree} of the map $f$. If $f$ and $g$ are homotopic as maps from the $n$-sphere to itself, they will induce the same homomorphism on homology groups. Therefore, taking $[S^n,S^n]$ to denote the set of homotopy classes of maps, we can consider degree as a function
\[
	\deg^\top: [S^{n},S^{n}] \to \Z.
\]
Throughout these notes, we will use the notation $\deg^\top$ to refer to the topological (integer-valued) degree. 

For any continuous map of $n$-manifolds $f: M\to N$, we could define a naive notion of the ``local degree'' around a point $p\in M$ via the following procedure: suppose that $q\in N$ has the property that $f^{-1}(q)$ is discrete, and let $p\in f^{-1}(q)$. Pick a small ball $W$ containing $q$, and a small ball $V \subseteq f^{-1}(W)$ satisfying $V \cap f^{-1}(q) = \left\{ p \right\}$. Then we may see that the spaces $V \big/(V \setminus \{p\}) \simeq (V\big/ \partial V) \simeq S^n$ are homotopy equivalent. Similarly, we have that $W\big/ (W \minus \{q\}) \simeq S^n$. We obtain the following diagram:
\begin{equation}\label{eqn:local-degree-topology}
	\begin{tikzcd}[ampersand replacement=\&]
	S^n \dar["\simeq" left]\ar[r, "g"] \& S^n \\ 
	V\big/(V \minus \{p\}) \rar["f" below] \& W \big/(W \minus \{q\}) \ar[u,"\simeq" right].
	\end{tikzcd}
\end{equation}
Thus we could define the local (topological) degree of $f$ around our point $p$, denoted $\deg^{\top}_p(f)$, to be the induced degree map on the $n$-spheres, that is, $\deg^\top_p (f) := \deg^\top(g)$ in the diagram above. If $f^{-1}(q) = \{p_1,\ldots,p_m\}$ is a discrete set of isolated points, we may relate the global degree to the local degree via the following formula
\[
	\deg^{\top}(f) = \sum_{i=1}^m \deg_{p_i}^\top(f).
\]
One may prove that the left hand side is independent of $q$, and thus that the choice of $q$ is arbitrary in calculating the global degree from local degrees. In differential topology, when discussing the degree of a locally differentiable map $f$ between $n$-manifolds, we have a simple formula for the local degree at a simple zero. We pick local coordinates $(x_1,\ldots,x_n)$ in a neighborhood of our point $p_i$, and local coordinates $(y_1,\ldots,y_n)$ around a regular value $q$. Then we can interpret $f$ locally as a map $f = (f_1,\ldots, f_n): \R^n \to \R^n$. Suppose that the Jacobian $\Jac(f)$ is nonvanishing at the point $p_i$. Then we define
\begin{align*}
    \deg^\top_{p_i}(f) = \sgn(\Jac(f)(p_i)) = \begin{cases} +1 & \text{if }\Jac(f)(q_i) > 0 \\ -1 & \text{if }\Jac(f)(q_i) < 0.
	\end{cases}
\end{align*}

When working over a field $k$, Barge and Lannes\footnote{Unpublished. See the note by Morel on \cite[p.1037]{Morel-ICM}.} defined a notion of degree for a map $\P^1_k \to \P^1_k$. Their insight was, rather than taking the sign of the Jacobian as in differential topology, to instead remember the value of $\Jac(f)(p_i)$ as a square class in $k^\times / \left( k^\times \right)^2$. Over the reals this recovers the sign, but over a general field we may have vastly more square classes. We encode this value as a rank one symmetric bilinear form over $k$, and we will soon see that this idea can be used to define a local degree at $k$-rational points, and that using field traces we can extend the definition of local degree to hold for points with residue fields a finite separable extension of $k$. These degrees, rather than being integers, are elements of the \textit{Grothendieck--Witt ring of $k$}, denoted $\GW(k)$, defined below.

\subsection{The Grothendieck--Witt Ring}\label{sec:GW}

Over a field $k$, we may form a semiring of isomorphism classes of non-degenerate symmetric bilinear forms (or quadratic forms if we assume $\char(k)\ne 2$) on vector spaces over $k$, using the operations $\otimes_k$ and $\oplus$. Group completing this semiring with respect to $\oplus$, we obtain the Grothendieck--Witt ring $\GW(k)$. For any $a\in k^\times$, we let $\gw{a} \in \GW(k)$ denote the following rank one bilinear form:
\begin{align*}
\gw{a}: k\times k & \to k \\
(x,y) & \mapsto axy.
\end{align*}

Symmetric bilinear forms are equivalent if they differ only by a change of basis. For example, if $b\neq 0$ we can see that $\gw{ab^2}(x,y) = \gw{a}(bx,by)$, so we identify $\gw{a}=\gw{ab^2}$ in $\GW(k)$, since these bilinear forms agree up to a vector space automorphism of $k$. We may describe $\GW(k)$ to be a ring generated by elements $\gw{a}$ for each $a \in k^\times \big/(k^\times)^2$, subject to the following relations \cite[Lemma~4.9]{morel}
\begin{enumerate}
\item $\gw{a}\gw{b} = \gw{ab}$
\item $\gw{a} + \gw{b} = \gw{ab(a+b)} + \gw{a+b}$, for $a+b \neq 0$
\item $\gw{a}+\gw{-a} = \gw{1} + \gw{-1}$. We conventionally denote this element as $\H := \gw{1} + \gw{-1}$, called the \textit{hyperbolic element} of $\GW(k)$.
\end{enumerate}

\begin{exercise} In the statements above, (1) and (2) imply (3).
\end{exercise}

\begin{proposition} We have a ring isomorphism $\GW(\C) \cong \Z$, given by taking the rank.
\end{proposition}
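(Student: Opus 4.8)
The plan is to reduce the statement to two standard facts: that $\C$ is quadratically closed, and that the dimension (``rank'') of a form behaves well under $\oplus$ and $\otimes$. I would first record that since $\C$ is algebraically closed, every element of $\C^\times$ is a square, so $\C^\times/(\C^\times)^2$ is trivial and $\gw{a} = \gw{1}$ in $\GW(\C)$ for all $a \in \C^\times$.

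Next, because $\char(\C) = 0 \neq 2$, every non-degenerate symmetric bilinear form over $\C$ is diagonalizable, hence isometric to $\gw{a_1}\oplus\cdots\oplus\gw{a_n}$; combined with the previous observation, any such form of dimension $n$ is isometric to $n\gw{1} := \gw{1}^{\oplus n}$. Moreover $m\gw{1} \cong n\gw{1}$ forces $m = n$ by comparing dimensions. Therefore the semiring of isomorphism classes of non-degenerate symmetric bilinear forms over $\C$ is, via rank, the semiring $(\Z_{\geq 0}, +, \times)$: it is freely generated as a monoid under $\oplus$ by the class of $\gw{1}$, and the multiplication is pinned down by $\gw{1}\otimes\gw{1} = \gw{1}$. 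Group completing with respect to $\oplus$ then yields a ring isomorphism $\GW(\C) \xto{\sim} \Z$; concretely this is the rank homomorphism, which is therefore both surjective and injective. (Injectivity can also be checked by hand: an element $q_1 - q_2$ of rank $0$ has $\dim q_1 = \dim q_2 = n$, so $q_1 \cong n\gw{1} \cong q_2$ and the element is $0$.)

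I do not expect a genuine obstacle. The one point meriting care is the diagonalizability of symmetric bilinear forms in characteristic $\neq 2$ --- i.e.\ orthogonalization by completing the square --- together with the routine bookkeeping of the group completion needed to upgrade ``rank determines the isometry class over $\C$'' to ``rank is injective on $\GW(\C)$''. One could alternatively argue straight from the presentation of $\GW(k)$ recalled above: with the single generator $\gw{1}$ the relations (1)--(3) become vacuous; but one still needs the rank map to certify that no further collapsing occurs, so the argument above is essentially the shortest complete path.
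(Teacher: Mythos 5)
Your proposal is correct and follows the same route as the paper: since $\C^\times/(\C^\times)^2$ is trivial there is only one rank-one class, so after diagonalization every form is a sum of copies of $\gw{1}$ and rank determines the class. You simply spell out the diagonalization and group-completion steps that the paper's one-line proof leaves implicit.
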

\begin{proof} We remark that $\left\langle a \right\rangle = \left\langle b \right\rangle$ for any $a,b\in \C^\times$, thus we only have one isomorphism class of non-degenerate symmetric bilinear forms in rank one.
\end{proof}
The isomorphism $\GW(\C) \cong \Z$ relates to a general fact that the $\A^1$-degree of a morphism of complex schemes recovers the size of the fiber, counted with multiplicity.

\begin{proposition} The rank and signature provide a group isomorphism $\GW(\R) \cong \Z\times \Z$.
\end{proposition}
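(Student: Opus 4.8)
The plan is to deduce the proposition from Sylvester's law of inertia. Since $\gw{ab^2} = \gw{a}$ in $\GW(\R)$ and $\R^\times/(\R^\times)^2$ is represented by $1$ and $-1$, the only rank-one forms available are $\gw{1}$ and $\gw{-1}$. Moreover every nondegenerate symmetric bilinear form over $\R$ admits an orthogonal diagonalization by Gram--Schmidt, and each diagonal entry, being a nonzero real, equals $1$ or $-1$ up to a square; hence every such form is isomorphic to $p\,\gw{1} \oplus q\,\gw{-1}$ for some integers $p,q \ge 0$. Thus $\gw{1}$ and $\gw{-1}$ generate the monoid of isomorphism classes of nondegenerate symmetric bilinear forms, and therefore generate $\GW(\R)$ as an abelian group under $\oplus$.

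It then remains to rule out relations among these two generators. The uniqueness half of Sylvester's law of inertia says exactly that the pair $(p,q)$ attached to a form above is an isomorphism invariant; equivalently, the monoid of isomorphism classes is the free commutative monoid on $\gw{1}$ and $\gw{-1}$. Group-completing, $\GW(\R)$ is the free abelian group on $\gw{1}$ and $\gw{-1}$, so $\GW(\R) \cong \Z \times \Z$. In terms of the invariants named in the statement, $\rank(p\,\gw{1}+q\,\gw{-1}) = p+q$ and $\sig(p\,\gw{1}+q\,\gw{-1}) = p-q$, so the pair $(\rank,\sig)$ is the injective homomorphism $(p,q) \mapsto (p+q,\,p-q)$, whose matrix has determinant $-2$; it identifies $\GW(\R)$ with the index-two subgroup $\{(a,b) \in \Z\times\Z : a \equiv b \bmod 2\}$, which is again free abelian of rank two. (If one wants the isomorphism onto $\Z\times\Z$ on the nose, one instead reads it off from the basis $\gw{1},\gw{-1}$ directly, with rank and signature recovered as $p+q$ and $p-q$.)

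The single piece of genuine content is the uniqueness part of Sylvester's law of inertia --- the assertion that $\gw{1}$ and $\gw{-1}$ satisfy no unexpected relations --- and this is the step I expect to carry the weight; the existence of a $\pm1$-diagonalization is routine linear algebra. A clean way to deploy the uniqueness without reproving it is to invoke the well-definedness of the signature as a homomorphism $\GW(\R) \to \Z$ (this well-definedness being precisely Sylvester): if $m\,\gw{1} + n\,\gw{-1} = 0$ in $\GW(\R)$, then applying $\rank$ gives $m+n = 0$ and applying $\sig$ gives $m-n = 0$, forcing $m = n = 0$. As a consistency check one can also verify that over $\R$ the relations (1)--(3) in the presentation of $\GW(k)$ recalled above all degenerate to tautologies in $\gw{1}$ and $\gw{-1}$, so that nothing is imposed beyond the free abelian group structure.
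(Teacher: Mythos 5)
Your proposal is correct and follows essentially the same route as the paper: diagonalize to write every nondegenerate form as $p\,\gw{1} \oplus q\,\gw{-1}$, invoke the uniqueness half of Sylvester's law of inertia to make $(\rank,\sig)$ a well-defined injective homomorphism, and identify its image as the index-two subgroup $\{(a+b,a-b) : a,b \in \Z\}$, which is abstractly isomorphic to $\Z\times\Z$. Your explicit remark that the isomorphism is onto this subgroup rather than onto $\Z\times\Z$ itself matches exactly how the paper handles the same point.
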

\begin{proof} The Gram matrix of a symmetric bilinear form on $\R^n$ is an $n\times n$ real symmetric matrix $A$. After diagonalizing our matrix $A$, we can always find a change of basis in which the eigenvalues lie in the set $\{-1,0,1\}$. A non-degenerate symmetric bilinear form guarantees that no eigenvalues will vanish, so all of these eigenvalues will be $\pm 1$. We may define the \textit{signature} of $A$ as the number of 1's appearing on the diagonalized matrix minus the number of -1's, and by \textit{Sylvester's law of inertia} this determines an invariant on our matrix $A$. Thus we obtain an injective map
\begin{align*}
	\GW(\R) & \to \Z\times \Z \\
	A & \mapsto (\rank(A), \sig(A)).
\end{align*}
The image of this map is the subgroup $\left\{ (a+b,a-b) \ : \ a,b\in\Z \right\}$, which one may verify is isomorphic to $\Z \times \Z$.
\end{proof}

The multiplication on $\GW(\R)$ does not agree with that of $\Z \times\Z$, in the sense that $\GW(\R) \cong \Z \times\Z$ is not a ring isomorphism. However one may verify that the map
\begin{align*}
    \GW(\R) &\to \frac{\Z[t]}{(t^2-1)},
\end{align*}
given by sending $\left\langle 1 \right\rangle\mapsto 1$ and $\left\langle -1 \right\rangle\mapsto t$, is in fact a ring isomorphism, and hence provides the multiplicative structure of $\GW(\R)$.

\begin{proposition} The rank and determinant provide a group isomorphism $\GW(\F_q) \cong \Z \times \F_q^\times \big/ (\F_q^\times)^2$.
\end{proposition}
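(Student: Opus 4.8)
The plan is to recognise $(\rank,\det)$ as the group completion of an additive invariant on the monoid of forms, and then to prove injectivity via the classification of symmetric bilinear forms over a finite field. Throughout I assume $q$ is odd; in characteristic $2$ squaring is a bijection of $\F_q^\times$, so the target collapses to $\Z$ and that case is treated separately. Write $C:=\F_q^\times/(\F_q^\times)^2$, a cyclic group of order two whose nontrivial element is the class of a fixed non-square $u$.

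First I would construct the homomorphism. The rank and the discriminant (the class in $C$ of the determinant of a Gram matrix) are invariants of isomorphism classes of non-degenerate symmetric bilinear forms, and each is additive for $\oplus$: for the discriminant this is the block identity $\det(A\oplus B)=\det(A)\det(B)$. Hence $(\rank,\det)$ is a monoid homomorphism from the semiring of forms to $\Z_{\geq 0}\times C$, and passing to the group completion extends it to a group homomorphism $\Phi\colon\GW(\F_q)\to\Z\times C$. Surjectivity is immediate: $\Phi(\gw 1)=(1,\bar 1)$ and $\Phi(\gw u-\gw 1)=(0,\bar u)$, and these two elements generate $\Z\times C$.

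The content is injectivity, which I would deduce from the claim that \emph{a non-degenerate symmetric bilinear form over $\F_q$ is determined up to isomorphism by its rank and its discriminant}. Granting this: every element of $\GW(\F_q)$ is a difference $[\phi]-[\psi]$ of classes of genuine forms, and if it lies in $\ker\Phi$ then $\phi$ and $\psi$ have the same rank and the same discriminant, hence are isomorphic, so the element is $0$. To prove the claim I would first show that every non-degenerate binary form over $\F_q$ is \emph{universal}, i.e. represents every nonzero element: to solve $ax^2+by^2=c$ with $c\neq 0$, the sets $\{ax^2:x\in\F_q\}$ and $\{c-by^2:y\in\F_q\}$ each have $(q+1)/2$ elements and so must intersect, since $(q+1)/2+(q+1)/2>q$. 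Then I would induct on the rank $n$: for $n=1$ the claim is the defining identification $\gw a=\gw{ab^2}$; for $n\geq 2$, diagonalising exhibits a non-degenerate binary subform, which represents $1$, so the whole form represents $1$ and splits off $\gw 1$, leaving a form of rank $n-1$ with the same discriminant to which the inductive hypothesis applies.

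The sole non-formal ingredient, and the step I would flag as the crux, is the universality of binary forms; everything else is bookkeeping with ranks and determinants. Alternatively one can argue directly from the presentation of $\GW(k)$ recalled above: over $\F_q$ the generators are $\gw 1$ and $\gw u$, and the only additive relation they satisfy is $\gw 1+\gw 1=\gw u+\gw u$, equivalently $2(\gw u-\gw 1)=0$ — this identity follows by feeding a representation $x^2+y^2=u$ (with $x,y\neq 0$) into relation (2), and one checks that instances of (1)–(3) produce nothing further. This presents $\GW(\F_q)$ as $\Z\gw 1\oplus(\Z/2)(\gw u-\gw 1)$, on which $\Phi$ is visibly the asserted isomorphism. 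Either route rests on the same arithmetic fact: that $x^2+y^2$ assumes a non-square value over $\F_q$.
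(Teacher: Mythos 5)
Your proposal is correct and takes essentially the same route as the paper, which merely defines the map $(\rank,\det)$, checks that the determinant is well defined modulo squares via $\det(C^TAC)=\det(A)\det(C)^2$, and defers the verification that it is an isomorphism to Lam (II, Theorem 3.5); your surjectivity check and your injectivity argument --- classification of non-degenerate forms over $\F_q$ by rank and discriminant, resting on the counting proof that binary forms are universal --- is precisely the content of that cited result, so you have supplied the details the paper outsources. Like the paper, you set aside the characteristic-two case (where the target collapses and the rank alone does the work), which is harmless since the odd-$q$ classification is the substance of the statement.
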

\begin{proof}[Proof~sketch] We may still use the rank of our matrix as an invariant for $\GW(\F_q)$. Additionally, we might use the determinant of our matrix to distinguish between symmetric bilinear forms. However note that, for any similar matrix $C^T A C$, it has determinant $\det(C^T A C) = \det(A)\det(C)^2$. Therefore, we can view the determinant as a well-defined map $\det:\GW(\F_q) \to \F_q^\times \big/ (\F_q^\times)^2$. After group completion, we obtain a map $\GW(\F_q) \xto{(\text{rank},\det)} \Z\times \F_q^\times \big/ (\F_q^\times)^2$, which we verify is a group isomorphism. For more details, see \cite[II,Theorem~3.5]{Lam}.
\end{proof}

One may use $\GW(\F_q)$ to understand $\GW(\Q_p)$ by applying the following result.

\begin{theorem}{\cite[VI,Theorem~1.4]{Lam}} \textit{(Springer's Theorem)} Let $K$ be a complete discretely valued field, and $\kappa$ be its residue field, with the assumption that $\char(\kappa) \neq 2$. Then there is an isomorphism of groups
\[
	\GW(K) \cong \frac{\GW(\kappa) \oplus \GW(\kappa)}{ \Z[\H,-\H] }.
\]
\end{theorem}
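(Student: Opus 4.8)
The plan is to fix a uniformiser $\pi$ of $K$, with valuation ring $\O_K$ and maximal ideal $\mathfrak{m}$, write down an explicit group homomorphism $\psi\colon\GW(\kappa)\oplus\GW(\kappa)\to\GW(K)$, prove it is surjective, and identify its kernel with the cyclic subgroup $\Z\cdot(\H,-\H)$. On generators, $\psi$ will send $(\langle\bar u\rangle,0)\mapsto\langle u\rangle$ and $(0,\langle\bar v\rangle)\mapsto\langle\pi v\rangle$, where $u,v\in\O_K^\times$ are arbitrary lifts. Since $\char(\kappa)\neq2$ we have $2\in\O_K^\times$ (hence $\char(K)\neq2$ as well), and completeness makes Hensel's lemma available: $1+\mathfrak{m}\subseteq(\O_K^\times)^2$, so reduction induces isomorphisms $\O_K^\times/(\O_K^\times)^2\xrightarrow{\sim}\kappa^\times/(\kappa^\times)^2$ and $K^\times/(K^\times)^2\xrightarrow{\sim}(\Z/2\Z)\times\kappa^\times/(\kappa^\times)^2$. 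In particular $\langle u\rangle\in\GW(K)$ depends only on $\bar u$, so the assignments above at least make sense, and one checks directly that $\psi$ respects the relations defining $\GW(\kappa)$ as an abelian group, using $\pi^2\in(K^\times)^2$ to handle the second summand. Surjectivity of $\psi$ is the normal-form statement: every nondegenerate symmetric bilinear form over $K$ diagonalises, and each diagonal entry can be rescaled by a square so as to lie in $\O_K^\times$ or in $\pi\O_K^\times$ according to the parity of its valuation; hence every form is isometric to $\langle u_1,\dots,u_m\rangle\perp\pi\langle v_1,\dots,v_n\rangle$ with unit entries, which is the image under $\psi$ of $\big(\langle\bar u_1,\dots,\bar u_m\rangle,\langle\bar v_1,\dots,\bar v_n\rangle\big)$.

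The inclusion $\Z\cdot(\H,-\H)\subseteq\ker\psi$ is immediate from relation (3) in $\GW(K)$: one has $\langle\pi\rangle\H=\langle\pi\rangle+\langle-\pi\rangle=\langle1\rangle+\langle-1\rangle=\H$, so $\psi(0,\H)=\langle\pi\rangle+\langle-\pi\rangle=\H=\psi(\H,0)$, hence $\psi(\H,-\H)=0$; a hyperbolic plane migrates freely between the two summands, which is exactly the relation being quotiented out. For the reverse inclusion I would produce a one-sided inverse. Write $\bar\psi$ for the surjection that $\psi$ induces on $\big(\GW(\kappa)\oplus\GW(\kappa)\big)/\Z(\H,-\H)$, and define $\Phi\colon\GW(K)\to\big(\GW(\kappa)\oplus\GW(\kappa)\big)/\Z(\H,-\H)$ by sending a form, written in the above normal form, to the class of $\big(\langle\bar u_1,\dots,\bar u_m\rangle,\langle\bar v_1,\dots,\bar v_n\rangle\big)$. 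Granting that $\Phi$ is well defined, it is additive on orthogonal sums and satisfies $\Phi\circ\bar\psi=\mathrm{id}$, so $\bar\psi$ is injective, hence an isomorphism, which is the assertion.

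The crux, and the step I expect to be the main obstacle, is the well-definedness of $\Phi$: one must show that the residue data $\langle\bar u_1,\dots,\bar u_m\rangle$ and $\langle\bar v_1,\dots,\bar v_n\rangle$ in $\GW(\kappa)$ depend, modulo $\Z(\H,-\H)$, only on the $K$-isometry class of the original form and not on the chosen diagonalisation. The essential input---and the point where completeness of $K$ is genuinely used---is the lemma that a nondegenerate symmetric bilinear form over $\O_K$ with unit discriminant represents zero nontrivially over $K$ if and only if its reduction does so over $\kappa$; the nontrivial implication lifts an isotropic vector from $\kappa$ to $K$ by successive approximation. Feeding this into Witt's cancellation theorem over $K$ and over $\kappa$, one can peel off hyperbolic planes and reduce to the case that the residue forms are anisotropic over $\kappa$; a rank count then pins down the remaining ambiguity, since $\rank\bar q_0+\rank\bar q_1=\rank\bar q_0'+\rank\bar q_1'$ forces any discrepancies $\langle\bar q_0\rangle-\langle\bar q_0'\rangle=a\H$ and $\langle\bar q_1\rangle-\langle\bar q_1'\rangle=b\H$ to satisfy $b=-a$, i.e.\ to lie in $\Z(\H,-\H)$.

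Equivalently, one can route the argument through the Witt ring $W(F):=\GW(F)/\Z\H$: the two residue homomorphisms assemble into the classical Springer isomorphism $W(K)\cong W(\kappa)\oplus W(\kappa)$, and the Grothendieck--Witt statement then follows by comparing the extensions $0\to\Z\H\to\GW(F)\to W(F)\to0$ for $F=K$ and $F=\kappa$ through the rank map, noting that $\psi$ carries $(m\H,n\H)$ to $(m+n)\H$ so the kernels agree up to $\Z(\H,-\H)$. The completeness input is the same isotropy lemma either way.
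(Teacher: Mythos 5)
The paper does not actually prove this statement---it simply cites Lam [VI, Theorem~1.4]---so the only fair comparison is with that classical argument, and your sketch is essentially it: the second residue construction with respect to a chosen uniformiser, Hensel's lemma giving $1+\mathfrak{m}\subseteq(\O_K^\times)^2$ (legitimate since $\char(\kappa)\neq 2$ forces $2\in\O_K^\times$), diagonalisation into a unit part and a $\pi$-part for surjectivity, and the isotropy-lifting lemma plus Witt cancellation for injectivity. The outline is correct, and you have correctly isolated where completeness enters. One remark on emphasis: in your third paragraph, the claim that the discrepancies $\bar q_0-\bar q_0'$ and $\bar q_1-\bar q_1'$ are integer multiples of $\H$ is not supplied by the rank count alone; it is exactly the well-definedness of the two residue maps at the Witt level (requiring both directions of the key lemma: a unit form with anisotropic reduction is anisotropic, and $q_0\perp\pi q_1$ with both reductions anisotropic is anisotropic over $K$, together with uniqueness of the anisotropic kernel). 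Your final paragraph, which first proves Springer's theorem for $W(K)\cong W(\kappa)\oplus W(\kappa)$ and then recovers the $\GW$ statement by comparing with the rank via the extension $0\to\Z\H\to\GW\to W\to 0$, is the cleanest way to get the precise quotient by $\Z(\H,-\H)$ appearing in the statement, and I would make that the main line of the argument rather than an alternative.
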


\begin{corollary} We have a group isomorphism $\GW(\C((t))) = \Z \oplus \Z\big/2$.
\end{corollary}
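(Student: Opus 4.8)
The plan is to apply Springer's Theorem directly, with $K = \C((t))$ and $\kappa = \C$. Indeed $\C((t)) = \Frac(\C[[t]])$ is complete with respect to the $t$-adic valuation, its residue field is $\C$, and $\char(\C) = 0 \neq 2$, so the hypotheses are met. The theorem then gives
\[
	\GW(\C((t))) \cong \frac{\GW(\C)\oplus\GW(\C)}{\Z[\H,-\H]}.
\]

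Next I would feed in the earlier computation $\GW(\C)\cong\Z$ via rank. Under this identification the hyperbolic element $\H = \gw{1}+\gw{-1}$ is sent to $2\in\Z$, since $\gw{-1} = \gw{1}$ in $\GW(\C)$ (there is only one square class). One should read $\Z[\H,-\H]$ as the cyclic subgroup of $\Z\oplus\Z$ generated by the single element $(\H,-\H) = (2,-2)$; this is the one spot where a little care is needed, since the naive reading ``$2\Z\oplus 2\Z$'' would produce the wrong group. So the problem reduces to identifying the abelian group $(\Z\oplus\Z)\big/\Z\cdot(2,-2)$.

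To finish, I would compute a Smith normal form, i.e.\ change basis on $\Z^2$: taking the basis $(1,-1),(0,1)$, whose change-of-basis matrix has determinant $1$, the subgroup $\Z\cdot(2,-2)$ becomes $2\Z\cdot(1,-1)$, so the quotient is $\Z/2\oplus\Z$. Hence $\GW(\C((t)))\cong\Z\oplus\Z/2$, as claimed. There is essentially no hard step once Springer's Theorem is invoked; the only real pitfalls are interpreting the subgroup being quotiented out and correctly tracking the image of $\H$ under the rank isomorphism. If one wants the invariants explicitly: the $\Z$-summand is recorded by the rank of a form over $\C((t))$, and the $\Z/2$-summand by its discriminant, which takes values in $\C((t))^\times\big/(\C((t))^\times)^2 = \{1,t\}$.
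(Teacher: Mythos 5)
Your proposal is correct and is exactly the route the paper intends: the corollary is stated as an immediate consequence of Springer's Theorem with $K=\C((t))$, $\kappa=\C$, combined with the earlier identification $\GW(\C)\cong\Z$ via rank. Your careful reading of $\Z[\H,-\H]$ as the cyclic subgroup generated by $(\H,-\H)=(2,-2)$ (rather than $2\Z\oplus 2\Z$) is the right one --- it is forced by the fact that rank gives a surjection $\GW(\C((t)))\to\Z$ --- and your Smith normal form computation of $(\Z\oplus\Z)/\Z\cdot(2,-2)\cong\Z\oplus\Z/2$ completes the argument the paper leaves implicit.
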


We should see how the Grothendieck--Witt ring interacts with extensions of fields. For a separable field extension $K\subset L$, and an element $\beta \in \GW(L)$, we can view the composition
\[
	V\times V\xto{\beta} L \xto{\Tr_{L/K}} K
\]
as an element of $\GW(K)$ by post-composing with the trace map $L\to K$, and considering $V$ as a $K$-vector space. This provides us a natural homomorphism between Grothendieck--Witt rings\footnote{When the field extension is assumed to be finite but the separability condition is dropped, a more general notion of transfer is given by \textit{Scharlau's transfer} \cite[VII~\S1]{Lam}.}
\begin{align*}
    \Tr_{L/K} : \GW(L) \to \GW(K).
\end{align*}
At the level of $\A^1$-homotopy theory, this trace comes from a transfer on stable homotopy groups --- for more detail see \cite[\S4]{morel}. Now that we have seen some computations involving the Grothendieck--Witt ring, we can develop in detail the notion of degree for maps of schemes.

\subsection{Lannes' formula}\label{sec:lannes}
Let $f: \P^1_k \to \P^1_k$ be a non-constant endomorphism of the projective line over a field of characteristic 0. We can then pick a rational point $q\in \P^1_k$, with fiber $f^{-1}(q) = \{p_1,\ldots, p_m\}$ such that $\Jac(f)(p_i) \neq 0$ for each $i$, where the Jacobian is computed by picking the same affine coordinates on both copies of $\P^1_k$. Since $\Jac(f)(p_i) \in k(p_i)$ is only defined in a residue field, we must precompose with the trace map in order to define the \textit{local $\A^1$-degree}
\begin{equation}\label{eqn:local-deg-P1}
\begin{aligned}
	\deg^{\A^1}_{p_i} f := \Tr_{k(p_i)/k} \gw{\Jac(f)(p_i)}.
\end{aligned}
\end{equation}

We can then define the \text{global $\A^1$-degree} of $f$ as the following sum, which is independent of our choice of rational point $q$ with discrete fiber (this fact is attributable to Lannes and Morel, although a detailed proof may be found in \cite[Proposition~14]{KW-EKL}):
\[
    \deg^{\A^1} f := \sum_{i=1}^m \Tr_{k(p_i)/k} \gw{\Jac(f)(p_i)}.
\]
\begin{exercise}\label{exerc:degree} Compute the $\A^1$-degrees of the following maps:
\begin{enumerate}
\item $\P_k^1 \to \P_k^1$, given by $z \mapsto az$, for $a\in k^\times$.
\item $\P_k^1 \to \P_k^1$, given by $z \mapsto z^2$.
\end{enumerate}
\end{exercise}

Maps of schemes $\P_k^1 \to \P_k^1$ are precisely rational functions $\frac{f}{g}$. Assuming that $f$ and $g$ are relatively prime, we can determine the classical topological (integer-valued) degree of this rational function as
\[
	\deg^\top\left(\frac{f}{g}\right) = \max\{ \deg^\top(f), \deg^\top(g)\}.
\]

To the rational function $\frac{f}{g}$, one may associated a bilinear form, called the \textit{B\'ezout form}, which is denoted $\Bez \left( \frac{f}{g} \right)$. This is done by introducing two variables $X$ and $Y$, and remarking that we have the following equality
\[
	\frac{f(X) g(Y) - f(Y)g(X)}{X-Y} = \sum_{1\leq i,j\leq n} B_{ij} X^{i-1}Y^{j-1},
\]
where $n=\deg^\top\left(\frac{f}{g}\right)$, and where $B_{ij} \in k$. We can see that this defines a symmetric bilinear form $k^n\times k^n \to k$, whose Gram matrix is given by the coefficients $B_{ij}$.

\begin{exercise} Compute the B\'ezout bilinear forms of the maps given in Exercise \ref{exerc:degree}.
\end{exercise}

\begin{theorem} \textit{(Cazanave)} We have that
\begin{align*}
    \Bez\left(\frac{f}{g}\right) = \deg^{\A^1}\left(\frac{f}{g}\right).
\end{align*}
This is stated in \cite[Theorem~2]{KW-rational-function}, but is attributable to \cite{Cazanave}.
\end{theorem}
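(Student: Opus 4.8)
The plan is to reduce to a generic representative of $f/g$ and then recognise both $\Bez(f/g)$ and $\deg^{\A^1}(f/g)$ as one and the same trace form on an \'etale $k$-algebra. Write $\phi$ for the morphism $f/g$ and $n=\deg^\top(f/g)$.

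\textbf{Step 1 (normalisation).} Directly from the defining identity for the B\'ezout form, the terms involving $q$ cancel, so $\Bez\big(\tfrac{f-qg}{g}\big)=\Bez\big(\tfrac{f}{g}\big)$ for every $q\in k$. On the other hand $\deg^{\A^1}(\phi)$ may be computed from \emph{any} rational value $q$ whose fibre is discrete with $\Jac(\phi)$ nonvanishing on it. Since $\char(k)=0$ the field $k$ is infinite, so I can pick a generic $q\in k$ for which $h:=f-qg$ is squarefree, $\deg h=n$, $\gcd(h,g)=1$, and $\phi^{-1}(q)\subset\A^1_k$ consists of $n$ distinct points (a short case split on whether $\deg f$ is $>$, $=$, or $<$ $\deg g$ ensures the degree does not drop and that $\infty\notin\phi^{-1}(q)$). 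Since $\Jac(\phi)(\alpha)=h'(\alpha)/g(\alpha)$ at a root $\alpha$ of $h$, after replacing $f$ by $h$ it suffices to show $\Bez(h/g)=\sum_{h(\alpha)=0}\Tr_{k(\alpha)/k}\gw{h'(\alpha)/g(\alpha)}$ in $\GW(k)$.

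\textbf{Step 2 (the right-hand side as a trace form).} Put $A=k[z]/(h)$; as $h$ is squarefree and $\char(k)=0$, $A$ is a finite \'etale $k$-algebra of dimension $n$ with $A\cong\prod_\alpha k(\alpha)$ over the roots of $h$. Using $\gw{c}=\gw{cd^2}$ I replace $\gw{h'(\alpha)/g(\alpha)}$ by $\gw{h'(\alpha)g(\alpha)}$, so the right-hand side above is the class of the symmetric bilinear form $\Theta(p,q)=\Tr_{A/k}(h'\,g\,p\,q)$ on $A$, where $g,h'$ denote the images of these polynomials in $A$. Thus $\deg^{\A^1}(\phi)=[\Theta]\in\GW(k)$.

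\textbf{Step 3 (the B\'ezout form is the same class).} By construction $\Bez(h/g)$ is the form on $A$ with Gram matrix $(B_{ij})$ in the monomial basis $1,z,\dots,z^{n-1}$, where $\tfrac{h(X)g(Y)-h(Y)g(X)}{X-Y}=\sum_{1\le i,j\le n}B_{ij}X^{i-1}Y^{j-1}$. Let $M=\big(\Tr_{A/k}(z^{i+j-2})\big)_{ij}\in\GL_n(k)$ be the Gram matrix, in the same basis, of the trace form of $A$ itself — invertible precisely because $A$ is \'etale. The crux is the matrix identity $M\cdot(B_{ij})\cdot M=\big(\Tr_{A/k}(h'g\,z^{i+j-2})\big)_{ij}$, i.e. $\Bez(h/g)$ and $\Theta$ are congruent over $k$ via $M\in\GL_n(k)$. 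I would verify this after base change to $\bar k$: there $A\otimes\bar k\cong\bar k^{\,n}$ via evaluation at the roots $\alpha_1,\dots,\alpha_n$, so $M=V^TV$ and $\Theta=V^T\mathrm{diag}(g(\alpha_i)h'(\alpha_i))V$ with $V=(\alpha_i^{\,j-1})$ the Vandermonde matrix, while the partial-fraction expansion $\tfrac{h(X)g(Y)-h(Y)g(X)}{X-Y}=\sum_i\tfrac{g(\alpha_i)}{h'(\alpha_i)}\cdot\tfrac{h(X)}{X-\alpha_i}\cdot\tfrac{h(Y)}{Y-\alpha_i}$, together with $\tfrac{h(X)}{X-\alpha_i}=h'(\alpha_i)\ell_i(X)$ for the Lagrange interpolants $\ell_i$ (so the $h'(\alpha_i)$'s turn the weight $g(\alpha_i)/h'(\alpha_i)$ into $g(\alpha_i)h'(\alpha_i)$), gives $(B_{ij})=V^{-1}\mathrm{diag}(g(\alpha_i)h'(\alpha_i))(V^{-1})^T$; the identity is then immediate. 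Hence $[\Bez(f/g)]=[\Bez(h/g)]=[\Theta]=\deg^{\A^1}(\phi)$.

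\textbf{Main obstacle.} The reduction in Step 1 — finding a good $q$, the degree sub-cases, keeping $\infty$ out of the fibre — is routine but must be handled with care (it uses $\char(k)=0$, hence $k$ infinite, in an essential way). The substantive content is the identity of Step 3: that conjugating the Bezoutian matrix by the trace-form matrix $M$ of $A$ produces the twisted trace matrix $\Theta$. The whole point is that $M$ and all the identifications are $k$-rational — and $M$ is invertible because $A$ is \'etale — so a check over $\bar k$, where the Vandermonde diagonalisation and partial fractions make everything transparent, suffices to upgrade an a priori $\bar k$-congruence to an equality of classes in $\GW(k)$.
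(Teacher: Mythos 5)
Your proof is correct, but it takes a genuinely different route from the paper, which in fact offers no proof at all: the statement is quoted from Theorem~2 of \cite{KW-rational-function} and attributed to \cite{Cazanave}. Cazanave's own argument is homotopy-theoretic (he identifies the monoid of naive classes $[\P^1_k,\P^1_k]_N$ with a monoid of B\'ezout forms and compares with Morel's computation of $[\P^1_k,\P^1_k]_{\A^1}$), while Kass--Wickelgren deduce the formula from their local-degree machinery (Scheja--Storch/EKL forms and Harder's theorem). What you do instead is a self-contained classical computation: using the translation invariance of the Bezoutian and the independence of $\deg^{\A^1}$ from the regular value (Proposition~14 of \cite{KW-EKL} --- your Step~1 quietly relies on this, which is fine since the paper itself invokes it to define the global degree), you reduce to a generic $q$ with $h=f-qg$ squarefree, of degree $n$, and coprime to $g$; you identify $\sum_p \Tr_{k(p)/k}\gw{h'(p)/g(p)}$ with the twisted trace form $\Tr_{A/k}(h'g\,\cdot\,)$ on the \'etale algebra $A=k[z]/(h)$; and you prove the $k$-rational congruence $M B M = \left(\Tr_{A/k}(h'gz^{i+j-2})\right)_{ij}$, with $M$ the trace-form Gram matrix, by the Vandermonde diagonalisation over $\bar k$. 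I checked the key identities ($\Jac(f/g)(\alpha)=h'(\alpha)/g(\alpha)$, $M=V^TV$, $B=V^{-1}D(V^{-1})^T$ with $D$ the diagonal of the values $h'(\alpha_i)g(\alpha_i)$, hence $MBM=V^TDV$), and the passage from a $\bar k$-verification of a matrix identity with entries in $k$ to an equality in $\GW(k)$ is legitimate, with invertibility of $M$ coming from \'etaleness. What your approach buys is an elementary argument whose only non-formal input is the well-definedness of the global degree; what it gives up is the finer content of Cazanave's theorem: it needs $\char(k)=0$ (or at least enough regular rational values) and only computes the class in $\GW(k)$ after moving to a generic fibre, whereas Cazanave's proof works at the level of naive homotopy classes, yielding the monoid structure and the group-completion statement quoted elsewhere in this section, with no choice of $q$ involved.
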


This provides us with an efficient way to compute the $\A^1$-degree of rational maps while circumventing the tedium of computing the local $\A^1$-degree at each point in a fiber.

\subsection{The unstable motivic homotopy category}\label{sec:unstable-cat}

One of the primary ideas in $\A^1$-homotopy theory is to replace the unit interval in classical homotopy theory with the affine line $\A^1_k = \Spec(k[t])$. To this end, one might develop a \textit{naive $\A^1$-homotopy} of maps of schemes $f,g:X\to Y$ as a morphism
\[
	h: X\times \A_k^1 \to Y,
\]
such that $h(x,0) = f(x)$ and $h(x,1) = g(x)$ for all $x\in X$. This was first introduced by Karoubi and Villamayor \cite{Karoubi}. This notion of naive $\A^1$-homotopy is not generally the most effective, partially due to the following observation.

\begin{exercise}\cite{Asok-notes} Prove that naive $\A^1$-homotopy fails to be a transitive relation on hom-sets by considering three morphisms $\Spec k \to \Spec k[x,y]/(xy)$ identifying the points $(0,1)$, $(0,0)$, and $(1,0)$.
\end{exercise}

We will build a model category in which we have a class of $\A^1$\textit{-weak equivalences}, and we will denote by $[-,-]_{\A^1}$ the weak equivalence classes of morphisms. In particular, naive $\A^1$-homotopy equivalences are tractable examples of $\A^1$-weak equivalences. Nonetheless, naive $\A^1$-homotopy generates an equivalence relation, and in practice the naive homotopy classes of maps $[X,Y]_N$ are often easier to compute than their genuine counterparts $[X,Y]_{\A^1}$. In fact, the naive homotopy classes of maps $[\P_k^1,\P_k^1]_N$ are equipped with an addition, induced by pinch maps, which endows this set with a monoid structure. It was demonstrated by Cazanave that the map
\[
	[\P_k^1,\P_k^1]_N \to [\P_k^1,\P_k^1]_{\A^1}
\]
is a group completion \cite{Cazanave}.

In order to study the homotopy theory of schemes, we must develop a model structure which encodes a notion of $\A^1$-weak equivalence. In particular we must force $\A^1$ to be contractible --- as we have remarked, the initial motivation for forming such a model category was to treat $\A^1$ as if it were akin to the interval $[0,1]$ in the category of topological spaces. Morel and Voevodsky initially formulated the theory of the ``homotopy category of a site with an interval''; for this classical treatment see \cite[\S2.3]{MV99}.

We remark that the category of smooth $k$-schemes $\Sm_k$ does not admit all colimits, and therefore cannot be endowed with a model structure. To rectify this issue, we pass to the category of the simplicial presheaves via the Yoneda embedding
\begin{align*}
    \Sm_k &\to \sPre(\Sm_k) = \Fun(\Sm_k^\op, \sSet) \\
    X &\mapsto \Map(-, X).
\end{align*}
This new category is \textit{cocomplete} (it admits all small colimits), and moreover can be equipped with the \textit{projective model structure} arising from the classical model structure on simplicial sets. Given our model structure, we are now permitted to identify a class of morphisms which we would like to call weak equivalences, and perform Bousfield localization in order to formally invert them. For exposition on Bousfield localization and related results, we refer the reader to \cite{lawson2020introduction}.

The analog of open covers in a categorical setting is provided by a Grothendieck topology $\tau$. The category $\Sm_k$ can be equipped with a Grothendieck topology in order to make it a site, after which, we will apply Bousfield localization to render the class of $\tau$-hypercovers (our analog of open covers) into weak equivalences. We remark that by \cite[Theorem~6.2]{Dugger-Hollander}, this localization exists, and we denote it by $L_\tau: \sPre(\Sm_k) \to \Sh_{\tau,k}$. 
The fibrant objects in $\Sh_{\tau,k}$ are those simplicial presheaves which are \textit{homotopy sheaves} in the $\tau$ topology \cite[p.~20]{AE}. We therefore think about the localization $L_\tau$ as a way to encode the topology $\tau$ into the homotopy theory of $\sPre(\Sm_k)$.

Due to the wealth of properties granted to us by simplicial presheaves, the category $\Sh_{\tau,k}$ inherits a left proper combinatorial simplicial model category structure, and in particular we are allowed to perform Bousfield localization again in order to force $\A^1$ to be contractible. We identify a set of maps $\{X\times \A^1 \to X\}$, indexed over the set of isomorphism classes of objects in $\Sm_k$, as our desired weak equivalences, then perform a final Bousfield localization $L_{\A^1}$ with respect to this set. Finally, we define
\[
	\Spc^{\A^1}_{\tau,k} := L_{\A^1} \Sh_{\tau,k} = L_{\A^1} L_\tau \s\Pre(\Sm_k).
\]
This category has a model structure by construction, and we refer to its homotopy category as the \textit{unstable motivic homotopy category}. Throughout these notes and in much of the literature, it is assumed we are using the \textit{Nisnevich topology} (which is defined and contrasted with other choices of topologies below), and we will write $\Spc_k^{\A^1} := \Spc_{\Nis,k}^{\A^1}$. Our primary objects of study in $\Spc_k^{\A^1}$ will be the fibrant objects of this category, which we refer to as $\A^1$\textit{-spaces}. These admit a tangible recognition as precisely those presheaves which are valued in Kan complexes, satisfy Nisnevich descent, and are $\A^1$-invariant \cite[Remark~3.58]{AE}. For more detail, see \cite[\S3]{AE}.

There are many equivalent constructions of $\Spc^{\A^1}_k$, one notable one arising from the \textit{universal homotopy theory} on the category of smooth schemes, as described by \cite{Dugger-universal}. By freely adjoining homotopy colimits, we obtain a universal category $U(\Sm_k)$ which we may localize at the collections of maps
\begin{align*}
	&\left\{ \hocolim U_{\bullet} \to X \ : \ \{U_\alpha\} \text{ is a hypercover of $X$}\right\} \\
	&\{ X \times \A^1 \to X\}.
\end{align*}
This procedure produces a model category $U(\Sm_k)_{\A^1}$ which is Quillen equivalent to $\Spc_k^{\A^1}$.

\begin{remark} In more modern language, one may build $\Spc^{\A^1}_k$ using $(\infty,1)$-categories rather than model categories. Such a perspective may be found throughout the literature, for example in \cite{BachmannHoyois, Robalo}.
\end{remark}

One may study the categories $\Spc_{k,\tau}^{\A^1}$ arising from other choices of Grothendieck topologies, and indeed the homotopy theories arising from each selection behave quite differently and merit individual study. A small inexhaustive list of possible topologies includes the Zariski, Nisnevich, and \'etale topologies.

\begin{definition} Suppose that $X$ and $Y$ are smooth over a field $k$. Then we say $f:X\to Y$ is \textit{\'etale} at $x$ if the induced map on cotangent spaces
\[
	(f^\ast \Omega_{Y/k})_x \xto{\iso} \Omega_{X/k, x}
\]
is an isomorphism \cite[\S2.2,~Corollary~10]{Neron-models}. If we have the additional structure of coordinates on our base and target spaces, this is equivalent to the condition that $\Jac(f)\neq 0$ in $k(x)$.
\end{definition}
For example, any open immersion $X\ohook Y$ is a local isomorphism, and is therefore an \'etale map.

\begin{definition} Let $\left\{ f_\alpha : U_\alpha \to X \right\}$ be a family of \'etale morphisms. We say that it is
\begin{enumerate}
    \item an \textit{\'etale cover} if this is a cover of $X$, that is the underlying map of topological spaces is surjective
    \item a \textit{Nisnevich cover} if this is a cover of $X$, and for every $x\in X$ there exists an $\alpha \in A$ and $y\in U_\alpha$ such that $y\mapsto x$ and it induces an isomorphism on residue fields $k(y)\xto{\cong} k(x)$
    \item a \textit{Zariski cover} if this is a cover of $X$, and each $f_\alpha$ is an open immersion.
\end{enumerate}
\end{definition}

\begin{remark} Every Zariski cover is a Nisnevich cover, and every Nisnevich cover is an \'etale cover, however the converses of these statements do not hold.
\end{remark}

In the Nisnevich topology, we are also able to retain some of the advantages that the Zariski topology offers. One of the primary advantages is that algebraic $K$-theory satisfies Nisnevich descent. Additionally we are able to compute the Nisnevich cohomological dimension as the Krull dimension of a scheme \cite[p.94]{MV99}. Finally, we refer the reader to \cite[Proposition~7.2]{AE}, which allows us to treat morphisms of schemes locally as morphisms of affine spaces, analogous to charts of Euclidean space in differential topology.

\subsection{Colimits}\label{sec:colimits}

Recall that the primary motivation in passing from $\Sm_k$ to $\sPre(\Sm_k)$ was the existence of colimits. Despite the fact that $\Sm_k$ does not admit all small colimits, it still admits some --- as a class of examples, consider colimits of schemes arising from Zariski open covers. The problem is that the Yoneda embedding $y: \Sm_k \to \sPre(\Sm_k)$ does not preserve colimits in general, thus in our efforts to rectify the failure of $\Sm_k$ to admit colimits, we have essentially forgotten about the colimits that it did in fact possess. This is part of the motivation to localize at $\tau$-hypercovers --- we see that colimits of schemes correspond to hypercovers on the associated representable presheaves. By our discussion in the previous section, the localization $L_\tau$ can be considered as the localization precisely at the class of maps $\hocolim U_\bullet \to X$ for any $\tau$-hypercover $U_\bullet \to X$. Thus colimits of schemes are recorded in the category $\Spc_k^{\A^1}$ as homotopy colimits corresponding to hypercovers. For ease of reference, we summarize this in the following slogan.

\begin{slogan} Colimits of smooth schemes along $\tau$-covers yield homotopy colimits of motivic spaces.
\end{slogan}

To illustrate this point, we consider the following example, where $\G_m := \Spec k \left[ x, \frac{1}{x} \right]$ denotes the multiplicative group scheme.

\begin{example}\label{ex:P1-cover} Let $f:\G_m \to \A_k^1$ be given by $z\mapsto z$, and $g:\G_m \to \A_k^1$ be given by $z\mapsto \frac{1}{z}$. Then the diagram
\[
	\begin{tikzcd}
	\G_m\rar["f" above]\dar["g" left] & \A_k^1\dar \\
	\A_k^1\rar & \P_k^1\po 
	\end{tikzcd}
\]
is a homotopy pushout of motivic spaces.
\end{example}
\begin{proof} We see that the two copies of the affine line form a Zariski open cover of $\P_k^1$, and hence a Nisnevich open cover of schemes. This corresponds to a hypercover on the representable simplicial presheaves, and after localization at Nisnevich hypercovers, we see that the homotopy pushout of $\left( \A_k^1 \from \G_m \to \A_k^1 \right)$ is precisely $\P^1_k$.
\end{proof}

For based topological spaces, recall we have a smash product, defined as
\[
	X \smashprod Y = X\times Y \big/\left( (X\times\{y\})\cup(\{x\}\times Y)\right).
\]
We can think about the category of based topological spaces as the slice category $\ast/\Top$, where $\ast$ denotes the one-point space, i.e. the terminal object. By similarly taking the slice category under the terminal object $\ast := \Spec k$, we obtain a pointed version of $\Spc_k^{\A^1}$, which is often denoted by $\Spc_{k,\ast}^{\A^1}$.\footnote{We note that such a slice category must be taken at the level of model categories rather than homotopy categories in order to have a tractable pointed homotopy theory.}
We can then define the smash product as the homotopy cofiber of the canonical map between the coproduct of two pointed motivic spaces into their product:
\[
	\begin{tikzcd}
	X\vee Y\rar\dar & X\times Y\dar \\
	\ast\rar &  X\smashprod Y.\po
	\end{tikzcd}
\]
One may define the \textit{suspension} as $\Sigma X := S^1 \smashprod X$, which we may verify is the same as the homotopy cofiber of $X \to \ast$. One may see that, since $\A^1_k \simeq \Spec k$ is contractible, we have that Example~\ref{ex:P1-cover} implies that $\P_k^1$ is the homotopy cofiber of the unique map $\G_m \to \Spec k$. Concisely, this example tells us that
\begin{align*}
    \P^1_k \simeq \Sigma \G_m.
\end{align*}

Recall from topology that the spheres satisfy $S^n \smashprod S^m \cong S^{n+m}$. In developing a homotopy theory of schemes, we would like to search for a class of objects satisfying an analogous property. From this motivation, we uncover two types of spheres in $\Spc^{\A^1}_k$. The first, denoted $S^1$, is called the \textit{simplicial sphere}, and can be thought of as the union of three copies of the affine line, enclosing a triangle. As a simplicial presheaf, we think of it as the constant presheaf at $S^1 = \Delta^1/\partial \Delta^1$. Our second sphere, often called the \textit{Tate sphere}, is taken to be the projective line $\P_k^1\simeq S^1\smashprod \G_m$.

There are various conventions for the notation on spheres in $\A^1$-homotopy theory, and in the literature one may see $S^{p+q\alpha}$, $S^{p,q}$ or $S^{p+q,q}$ to mean the same thing, depending on the context. In these notes, we will use the convention that
\[
	S^{p+q\alpha}:= (S^1)^{\smashprod p} \smashprod (\G_m)^{\smashprod q}.
\]

\begin{exercise} Show that the diagram
\[
	\begin{tikzcd}
	X\times Y\rar\dar & X\dar \\
	Y\rar & \Sigma (X\smashprod Y)\po
	\end{tikzcd}
\]
is a homotopy pushout diagram. The context for this example is left ambiguous as the result holds in $\Spc^{\A^1}_{k,\ast}$ just as well as it does for pointed topological spaces.
\end{exercise}

\begin{example}\label{ex:affine-space-minus-0} There is an $\A^1$-homotopy equivalence $\A_k^n \minus\{0\} \simeq (S^1)^{\smashprod (n-1)} \smashprod (\G_m)^{\smashprod n}$.
\end{example}
\begin{proof} Note that we may construct $\A_k^n\minus\{0\}$ as a homotopy pushout
\[
	\begin{tikzcd}
	(\A_k^1\minus\{0\})\times(\A_k^{n-1}\minus\{0\})\rar\dar & \A_k^1\times(\A_k^{n-1}\minus\{0\})\dar \\
	(\A_k^1 \minus \{0\})\times \A_k^n\rar & \A_k^n\minus \left\{ 0 \right\} \po.
	\end{tikzcd} 
\]
Applying the exercise above, we see that 
\begin{align*}
	\A_k^n \minus\{0\} &\simeq \Sigma (\A_k^{n-1} \minus\{0\})\smashprod (\A_k^1 \minus \{0\}) = S^1 \smashprod (\A_k^{n-1} \minus\{0\}) \smashprod \G_m.
\end{align*}
The result follows inductively.
\end{proof}

\begin{notation} For a morphism of motivic spaces $f: X \to Y$, denote by $Y/X$ the homotopy cofiber of the map $f$, that is, the homotopy pushout
\[ \begin{tikzcd}
    X\rar["f" above]\dar & Y\dar\\
    \ast\rar & Y/X.\po\\
\end{tikzcd} \]
\end{notation}

\begin{example}\label{ex:excision} \textit{(Excision)} Suppose that $X$ is a smooth scheme over $k$, that $Z \clhook X$ is a closed immersion, and that $U \supseteq Z$ is a Zariski open neighborhood of $Z$ inside of $X$. Then we have a Nisnevich weak equivalence (that is, a weak equivalence in the category $\Sh_{\Nis,k}$)
\begin{align*}
    \frac{U}{U\minus Z} \xto{\sim} \frac{X}{X \minus Z}.
\end{align*}
We refer to this result informally as \textit{excision} (not to be confused with excision in the sense of \cite[Proposition~3.53]{AE}), as we regard this weak equivalence as excising the closed subspace $X \minus U$ from the top and bottom of the cofiber $X/(X\minus Z)$.
\end{example}
\begin{proof} We remark that $(X\minus Z)$ and $U$ form a Zariski open cover of $X$, and that their intersection is $(X\minus Z) \cap U = U\minus Z$. As Zariski covers are Nisnevich covers, one remarks that we have a homotopy pushout diagram of motivic spaces
\[ \begin{tikzcd}
    U\minus Z\rar\dar & X\minus Z\dar\\
    U\rar & X.\po\\
\end{tikzcd} \]
The fact that the homotopy cofibers of the vertical maps in the diagram above are $\A^1$-weakly equivalent follows from the following diagram: 
\[ \begin{tikzcd}
    U\minus Z\rar\dar & X\minus Z\dar\rar & \ast\dar\\
    U\rar & X\rar\po & \frac{X}{X\minus Z}.\po\\
\end{tikzcd} \]
As the left and right squares are homotopy cocartesian, it follows formally that the entire rectangle is homotopy cocartesian.
\end{proof}

\begin{example}\label{ex:cofiber-proj-space} There is an $\A^1$-homotopy equivalence $\P_k^n \big/ \P_k^{n-1} \simeq (S^1)^{\smashprod n}\smashprod (\G_m)^{\smashprod n}$
\end{example}
\begin{proof} As $\P^n_k \minus \left\{ 0 \right\}$ is the total space of $\O(1)$ on $\P^{n-1}_k$, we have an $\A^1$-equivalence $\P^n_k \minus \left\{ 0 \right\} \simeq \P^{n-1}_k$. Therefore, one sees $\P_k^n \big/ \P_k^{n-1} \simeq \P_k^n \big/ (\P_k^n - \{0\})$. Via excision, we are able to excise everything away from a standard affine chart, from which we may see that $\P_k^n \big/ (\P_k^n - \{0\}) \simeq \A_k^n \big/(\A_k^n - \{0\})$. Contracting $\A_k^n$, we obtain $\ast\big/(\A_k^n - \{0\}) \simeq \Sigma (\A_k^n - \{0\})$. Therefore $\P_k^n \big/ \P_k^{n-1} \simeq \Sigma(\A_k^n - \{0\}) \simeq (S^1)^{\smashprod n}\smashprod (\G_m)^{\smashprod n}$ after applying Example~\ref{ex:affine-space-minus-0}.
\end{proof}

This last example is of particular interest, as it exhibits the cofiber $\P_k^n \big/ \P_k^{n-1}$ as a type of sphere in $\A^1$-homotopy theory. Given an endomorphism of such a motivic sphere, Morel defined a \textit{degree homomorphism}
\begin{align*}
    \deg^{\A^1}: \left[ \P^n_k/\P^{n-1}_k , \P^n_k/ \P^{n-1}_k \right]_{\A^1} \to \GW(k),
\end{align*}
which he proved was an isomorphism in degrees $n\ge 2$ \cite[Corollary~4.11]{Morel-ICM}.

Recall that to define a local Brouwer degree of an endomorphism between $n$-manifolds, we first had to pick a ball containing a point $p$, and then identify the cofiber $W/(W\minus \left\{ p \right\})$ with the $n$-sphere $S^n$. This allowed us to construct Diagram~\ref{eqn:local-degree-topology}, after which we could apply the degree homomorphism $[S^n,S^n] \to \Z$ to define a local degree. An analogous procedure will be available to us in $\A^1$-homotopy theory if, for a Zariski open neighborhood $U$ around a $k$-rational point $x$, we are able to associate a canonical $\A^1$-weak equivalence between $U/(U\minus\{x\})$ and $\P^n_k/\P^{n-1}_k$. Indeed this is possible via the theorem of \textit{purity}.

\subsection{Purity}\label{sec:purity}

One of the major techniques in $\A^1$-homotopy theory comes from the purity theorem. In manifold topology, the tubular neighborhood theorem allows us to define a diffeomorphism between a tubular neighborhood of a smooth immersion and an open neighborhood around its zero section in the normal bundle. In $\A^1$ homotopy theory, the Nisnevich topology isn't fine enough to define such a tubular neighborhood, however we can still get an analog of the tubular neighborhood theorem which will allow us to define, among other things, local $\A^1$-degrees of maps.

\begin{definition} A \textit{Thom space} of a vector bundle $V \to X$ is the cofiber
\[
	V \big/ (V\minus X),
\]
where $V\minus X$ denotes the vector bundle minus its zero section. In the literature, this may be denoted by
\[
	\text{Thom}(V,X) = \Th(V) = X^V.
\]
\end{definition}

\begin{remark} We may also describe the Thom space of a vector bundle via an $\A^1$-weak equivalence
\[ \Th(V) \simeq \frac{\Proj(V \oplus \O)}{\Proj(V)}.\]
\end{remark}
\begin{proof} We have a map $V \to V \oplus \O$ sending $v \mapsto (v,1)$, and we may view this inside of projective space via the inclusion $V \oplus \O \subseteq \Proj \left( V \oplus \O \right)$. Via excision (Example~\ref{ex:excision}), we have a Nisnevich weak equivalence
\begin{align*}
    \frac{\Proj(V \oplus \O)}{\Proj(V \oplus \O) \minus 0} \simeq \frac{V}{V\minus 0},
\end{align*}
where $0$ denotes the image of the zero section. We remark that $\Proj(V\oplus\O)\minus 0$ is the total space of $\O(-1)$ on $\Proj(V)$, thus we have an $\A^1$-weak equivalence $\Proj(V \oplus\O)\minus 0 \simeq \Proj(V)$. The result follows from observing $\frac{\Proj(V \oplus \O)}{\Proj(V \oplus \O) \minus 0} \simeq \frac{\Proj(V\oplus\O)}{\Proj(V)}$.
\end{proof}

\begin{theorem} \textit{(Purity theorem)} Let $Z\clhook X$ be a closed immersion in $\Sm_k$. Then we have an $\A^1$-equivalence
\[
	\frac{X}{X\minus Z} \simeq \Th(N_Z X),
\]
where $N_Z X \to Z$ denotes the normal bundle of $Z$ in $X$.
\end{theorem}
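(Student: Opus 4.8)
The plan is to deduce purity from the \emph{deformation to the normal cone}, reducing the global statement to a computation that is purely local in the Nisnevich topology. Two observations make this reduction legitimate. First, by excision (Example~\ref{ex:excision}) the motivic space $X/(X\minus Z)$ is unchanged, up to Nisnevich weak equivalence, if $X$ is replaced by any Zariski -- or even Nisnevich -- neighbourhood of $Z$. Second, applying the same reasoning to the total space of $N_Z X$, the Thom space $\Th(N_Z X)$ depends only on the restriction of the normal bundle to such a neighbourhood. Hence it suffices to establish the equivalence after passing to a suitable Nisnevich cover of $X$ adapted to $Z$, provided the resulting local equivalences can be glued.

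Concretely, I would form the deformation space $D$: the blow-up $\Bl_{Z\times\{0\}}(X\times\A^1_k)$ with the strict transform of the zero fibre $X\times\{0\}$ removed. This is a smooth $k$-scheme carrying a map $D\to\A^1_k$ together with a closed immersion $Z\times\A^1_k\clhook D$ over it, whose fibre over any nonzero point of $\A^1_k$ is the pair $(X,Z)$ and whose fibre over $0$ is $(N_Z X, Z)$ with $Z$ embedded as the zero section. Taking relative quotients over $\A^1_k$ and restricting to the fibres over $1$ and $0$ yields
\[
    \frac{X}{X\minus Z}\ \longrightarrow\ \frac{D}{D\minus(Z\times\A^1_k)}\ \longleftarrow\ \frac{N_Z X}{N_Z X\minus Z} = \Th(N_Z X),
\]
and the theorem follows once both arrows are shown to be $\A^1$-weak equivalences.

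To see this I would invoke the local structure theorem for closed immersions of smooth schemes: Nisnevich-locally on $X$ one can choose \'etale coordinates identifying $(X,Z)$ with $(\A^n_k,\A^d_k)$ for the linear subspace $\A^d_k\subseteq\A^n_k$. In this model the normal bundle is trivial, the deformation $D$ becomes the constant family $\A^n_k\times\A^1_k$ containing $\A^d_k\times\A^1_k$ in the evident way, and the two fibre inclusions are obtained by taking products of the inclusion of a $k$-point $\Spec k\hookrightarrow\A^1_k$ with $\A^n_k$ and with $\A^n_k\minus\A^d_k$. Since $\A^1_k$ is $\A^1$-contractible these section maps are $\A^1$-weak equivalences, hence so are the induced maps of homotopy cofibres, which are precisely the two maps in the local model.

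The main obstacle is the passage from these local equivalences to a global one: the pointed quotient $X/(X\minus Z)$ is a homotopy cofibre, hence a colimit, and does not commute with the limits underlying Nisnevich descent, so a naive sheafification argument is unavailable. The remedy is to induct on the number of charts in a finite Nisnevich cover of a neighbourhood of $Z$, gluing one chart at a time along an elementary distinguished square and the associated Mayer--Vietoris homotopy cofibre sequence for the pointed quotients -- the very mechanism used in the proof of excision above. What makes the induction close up is, once more, excision: shrinking to a Nisnevich neighbourhood of $Z$ leaves $X/(X\minus Z)$ unchanged; and reassembling the locally trivialised pieces of $N_Z X$ over the cover is precisely what upgrades the local (trivial) Thom spaces to $\Th(N_Z X)$ for the possibly nontrivial bundle $N_Z X$.
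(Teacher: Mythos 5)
Your proposal is correct and follows essentially the same route as the paper: the deformation to the normal cone $\Bl_{Z\times\{0\}}(X\times\A^1_k)\minus\Bl_{Z\times\{0\}}(X\times\{0\})$, comparison of the fibres over $0$ and $1$ via their pointed quotients, and reduction to the linear local model $(\A^n_k,\A^d_k)$ by Nisnevich-local arguments. The only difference is one of packaging: the chart-by-chart gluing you carry out by hand via elementary distinguished squares is exactly the content of the smooth-pairs locality lemma (Lemma~\ref{lem:smooth-pairs}) that the paper cites to close the argument.
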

\begin{proof} The proof uses the deformation to the normal bundle of Fulton and MacPherson \cite{Fulton}. Let $f$ denote the composition of the maps
\begin{align*}
	\Bl_{Z\times \{0\}} (X\times \A_k^1) \to X\times \A_k^1 \to \A_k^1.
\end{align*}

We define $D_Z X$ to be the scheme $\Bl_{Z\times\{0\}} (X\times \A_k^1) \minus \Bl_{Z\times\{0\}} (X\times\{0\})$, and note that $f$ restricts to a map $f\Big|_{D_Z X} : D_Z X \to \A_k^1$. We may compute the fiber of $\left. f \right|_{ D_Z X }$ over 0 as 
\begin{align*}
	f\Big|_{D_Z X}^{-1}(0) &= \Proj \left( N_{Z\times\{0\}} (X\times \A_k^1) \right) \minus \Proj \left( N_{Z\times\{0\} } (X\times \{0\})  \right) \\
	&= \Proj (N_Z X \oplus \O) \minus \Proj \left( N_Z X \right) \\
	&= N_Z X,
\end{align*}
and the fiber over 1 as $f\Big|_{D_Z X}^{-1}(1) = X$. Since $Z\times \A_k^1$ determines a closed subscheme in $D_Z X$, we have that the fiber over 0 is $Z\subseteq N_Z X$ and the fiber over 1 is $Z\subseteq X$. Thus we obtain morphisms of pairs

\begin{equation}\label{eqn:normal-bundle-purity}
\begin{aligned}
    (Z, N_Z X) &\xto{i_0} (Z\times \A_k^1, D_Z X) \\
	(Z,X) &\xto{i_1} (Z\times \A_k^1, D_Z X),
\end{aligned}
\end{equation}

corresponding to the inclusions of the fibers over the points 0 and 1, respectively. To prove the purity theorem, it now suffices to show that the induced morphisms on cofibers are weak equivalences:
\begin{align*}
	\frac{N_Z X}{N_Z X \minus Z} &\to \frac{D_Z X}{D_Z X \minus Z\times \A_k^1} \\
	\frac{X}{X\minus Z} &\to \frac{D_Z X}{D_Z X \minus Z\times \A_k^1}.
\end{align*}

\begin{lemma}\label{lem:smooth-pairs}\cite[Lemma 7.3]{AE} Suppose that \textbf{P} is a property of smooth pairs of schemes such that the following properties hold:
\begin{enumerate}
\item If $(Z,X)$ is a smooth pair of schemes and $\{U_\alpha \to X\}_{\alpha \in A}$ is a Zariski cover of $X$ such that \textbf{P} holds for the pair 
\[
	(Z\times_X U_{\alpha_1}\times_X \cdots \times_X U_{\alpha_n}, U_{\alpha_1}\times_X \cdots \times_X U_{\alpha_n})
\]
for each $(\alpha_1,\ldots,\alpha_n)$, then \textbf{P} holds for $(Z,X)$
\item If $(Z,X)\to (Z,Y)$ is a morphism of smooth pairs inducing an isomorphism on $Z$ such that $X\to Y$ is \'etale, then \textbf{P} holds for $(Z,X)$ if and only if \textbf{P} holds for $(Z,Y)$
\item \textbf{P} holds for the pair $(Z,\A_k^n\times Z)$,
\end{enumerate}
then \textbf{P} holds for all smooth pairs.
\end{lemma}

To conclude the proof of purity, we let \textbf{P} be the property on the pair $(Z,X)$ that the morphisms in Equation \ref{eqn:normal-bundle-purity} induce homotopy pushout diagrams\footnote{Equivalently, one may say that $i_0$ and $i_1$ are \textit{weakly excisive} morphisms of pairs \cite[Definition~3.17]{Hoyois-six}.}
\[ \begin{tikzcd}
    Z\rar\dar & \frac{N_Z X}{N_Z X \minus Z}\dar \\
    Z\times \A^1_k\rar & \frac{D_Z X}{D_Z X \minus Z\times \A^1_k}\po
\end{tikzcd} \qquad \begin{tikzcd}
    Z\rar\dar & \frac{X}{X \minus Z}\dar \\
    Z\times \A^1_k\rar & \frac{D_Z X}{D_Z X \minus Z\times \A^1_k}\po.
\end{tikzcd} \]

One may check that Lemma \ref{lem:smooth-pairs} holds for this property, and therefore since $Z \to Z\times \A^1_k$ is a weak equivalence, a homotopy pushout along this map is also a weak equivalence. Thus we obtain a sequence of $\A^1$-weak equivalences
\begin{align*}
    \frac{X}{X\minus Z} \xto{\sim} \frac{D_Z X}{D_Z X \minus Z\times \A^1_k} \xfrom{\sim} \frac{N_Z X}{N_Z X \minus Z} = \Th(N_Z X).
\end{align*}
\end{proof}

\section{$\A^1$-enumerative geometry}\label{sec:enum-geo}
As discussed above, Morel exhibited the global degree of maps of motivic spheres as
\[
	\deg^{\A^1} : [\P_k^n/\P_k^{n-1}, \P_k^n/\P_k^{n-1}]_{\A^1} \to \GW(k).
\]
Recall that, for a scheme $X$, we have functors to the category of topological spaces obtained by taking real and complex points, that is, $X\mapsto X(\R)$ and $X \mapsto X(\C)$. Morel's degree map satisfies a compatibility diagram with the degree maps we recognize from algebraic topology\footnote{The commutativity of this diagram is one of the key features of Morel's $\A^1$-degree and is attributable to him \cite[p.~1037]{Morel-ICM}. We can provide an alternative justification of this fact following the discussion of the EKL form in Section~\ref{sec:EKL}.}

\begin{equation}\label{fig:compatibility-diagram}
\begin{tikzcd}[ampersand replacement=\&]
	{[S^n,S^n]}\dar["\deg^\top" left] \& {[\P_\R^n/\P_\R^{n-1}, \P_\R^n/\P_\R^{n-1}]_{\A^1}}\rar["\C\text{-pts}"] \lar["\R\text{-pts}" above]\dar["\deg^{\A^1}"] \& {[S^{2n},S^{2n}]}\dar["\deg^\top" right] \\
	\Z \& \GW(\R)\rar["\text{rank}" below]\lar["\text{sig}" below] \& \Z. 
	\end{tikzcd}
\end{equation}

We can apply the purity theorem to develop a notion of local degree for a general map between schemes of the same dimension. Suppose that $f:\A_k^n \to \A_k^n$, and $x\in\A_k^n$ is a $k$-rational preimage of a $k$-rational point $y = f(x)$. Further suppose that $x$ is an isolated point in $f^{-1}(y)$, meaning that there exists a Zariski open set $U\subseteq \A_k^n$ such that $x\in U$ and $f^{-1}(y) \cap U = x$.

\begin{definition}\label{def:local-a1-deg} In the conditions above, the \textit{local $\A^1$-degree} of $f$ at $x$ is defined to be the degree of the map
\[
	U\Big/ (U\minus \{x\}) \xto{\overline{f}} \A_k^n \Big/(\A_k^n \minus\{y\}),
\]
under the $\A^1$-weak equivalences $U\Big/ (U\minus \{x\}) \cong \Th(T_x \A_k^n) \cong \P_k^n\Big/\P_k^{n-1}$ and $\A_k^n \Big/(\A_k^n \minus\{y\}) \cong \P_k^n \Big/\P_k^{n-1}$ provided to us by purity and by the canonical trivialization of the tangent space of affine space.
\end{definition}

Dropping the assumption that $k(x)=k$, but still assuming that $y$ is $k$-rational, we may equivalently define $\deg^{\A^1}_x f$ as the degree of the composite
\begin{align*}
	 \P_k^n\Big/\P_k^{n-1} \to  \P_k^n\Big/(\P_k^{n}\minus\{x\})\cong U\Big/ (U\minus \{x\}) \xto{\overline{f}} \A_k^n \Big/(\A_k^n \minus\{y\}) \cong \P_k^n\Big/\P_k^{n-1}.
\end{align*}

\begin{proposition} These definitions of the local degree are equivalent. This was proven in \cite[Prop.~12]{KW-EKL}, which is a generalization of a proof of Hoyois \cite[Lemma~5.5]{Hoyois-lefschetz}.
\end{proposition}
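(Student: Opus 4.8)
The plan is to show that the two constructions of $\deg^{\A^1}_x f$ agree by exhibiting them as two ways of evaluating the degree homomorphism on the same morphism of motivic spheres, the only genuine difference being the choice of how one presents the source sphere $\P^n_k/\P^{n-1}_k$. Concretely, in the first definition we fix a $k$-rational point $x$, choose a Zariski open $U$ with $f^{-1}(y)\cap U=\{x\}$, and use purity together with the canonical trivialization of $T_x\A^n_k$ to identify $U/(U\minus\{x\})\simeq \Th(T_x\A^n_k)\simeq \P^n_k/\P^{n-1}_k$; in the second definition we instead use the collapse map $\P^n_k/\P^{n-1}_k\to \P^n_k/(\P^n_k\minus\{x\})$ to reach a model that excision identifies with $U/(U\minus\{x\})$. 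So the heart of the argument is a compatibility statement: the composite $\P^n_k/\P^{n-1}_k\to \P^n_k/(\P^n_k\minus\{x\})\cong U/(U\minus\{x\})$ agrees, up to the purity identification, with a fixed $\A^1$-weak equivalence $\P^n_k/\P^{n-1}_k\xto{\sim}U/(U\minus\{x\})$.

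First I would reduce to the case where $U$ is a standard affine chart containing $x$, using the excision equivalence of Example~\ref{ex:excision} to replace $U$ by any smaller Zariski open neighborhood of $x$ contained in $U$; since excision is natural in the pair, this does not change the induced map on cofibers. After this reduction everything takes place inside a single $\A^1_k\cong\Spec k[t_1,\dots,t_n]$, and by translating we may assume $x=0$. Next I would make the purity identification explicit in this affine chart: the deformation-to-the-normal-bundle argument from the proof of the purity theorem, applied to the closed immersion $\{0\}\clhook \A^n_k$, produces the zig-zag $\A^n_k/(\A^n_k\minus\{0\})\xto{\sim}D_0\A^n_k/(D_0\A^n_k\minus\{0\}\times\A^1_k)\xfrom{\sim}\Th(T_0\A^n_k)$, and the point is that for the trivial bundle $T_0\A^n_k$ this zig-zag is, up to $\A^1$-homotopy, the identity on $\A^n_k/(\A^n_k\minus\{0\})$ under the canonical trivialization — this is exactly the content of Hoyois' lemma \cite[Lemma~5.5]{Hoyois-lefschetz} and of \cite[Prop.~12]{KW-EKL}. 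With purity thus normalized, both definitions become: take the collapse $\P^n_k/\P^{n-1}_k\to\P^n_k/(\P^n_k\minus\{x\})\cong U/(U\minus\{x\})$ (in the second definition it is written out; in the first it is hidden inside the chain of equivalences $U/(U\minus\{x\})\simeq\Th(T_x\A^n_k)\simeq\P^n_k/\P^{n-1}_k$, whose inverse is precisely this collapse map), then apply $\bar f$, then apply the analogous identification on the target.

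The final step is to observe that on the target side there is no ambiguity at all: $y$ is $k$-rational and we use the same canonical trivialization and the same collapse $\P^n_k/\P^{n-1}_k\cong \A^n_k/(\A^n_k\minus\{y\})$ in both definitions, so the two composites $\P^n_k/\P^{n-1}_k\to \P^n_k/\P^{n-1}_k$ literally coincide as morphisms in $\Spc^{\A^1}_{k,\ast}$. Applying Morel's degree homomorphism $\deg^{\A^1}\colon[\P^n_k/\P^{n-1}_k,\P^n_k/\P^{n-1}_k]_{\A^1}\to\GW(k)$ to this common morphism then yields equal elements of $\GW(k)$, which is the claim.

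I expect the main obstacle to be the precise bookkeeping in the purity normalization, i.e. checking that the deformation-to-the-normal-bundle zig-zag really does reduce to the identity (and not merely to some self-equivalence) of $\A^n_k/(\A^n_k\minus\{0\})$ for the canonically trivialized tangent bundle, compatibly with the collapse maps. This requires knowing how the blow-up $\Bl_{\{0\}\times\{0\}}(\A^n_k\times\A^1_k)$ and the identifications of its fibers over $0$ and $1$ behave with respect to the linear coordinates, and it is the step where one genuinely uses that affine space has a global chart; for $n\geq 2$ one can also lean on the fact that $\deg^{\A^1}$ is an isomorphism onto $\GW(k)$, so it suffices to match the two morphisms after composing with it, which one can sometimes do by a rank-and-discriminant computation or by base change to $\C$ and $\R$ via Diagram~\ref{fig:compatibility-diagram}. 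Beyond that, the remaining verifications — naturality of excision, invariance under shrinking $U$, and $k$-rationality of the target identification — are routine.
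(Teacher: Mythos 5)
Your framing of the problem is the right one: both definitions evaluate Morel's degree homomorphism on morphisms that differ only in how $\P^n_k/\P^{n-1}_k$ is identified with $U/(U\minus\{x\})$, the target-side identification $\A^n_k/(\A^n_k\minus\{y\})\cong\P^n_k/\P^{n-1}_k$ is literally shared, and after shrinking $U$ (Example~\ref{ex:excision}) and translating $x$ to the origin the whole question becomes whether the purity zig-zag for $\{0\}\clhook\A^n_k$, with the canonical trivialization of $T_0\A^n_k$, agrees up to $\A^1$-homotopy with the collapse-plus-excision map $\P^n_k/\P^{n-1}_k\to\P^n_k/(\P^n_k\minus\{0\})\cong\A^n_k/(\A^n_k\minus\{0\})$. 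For what it is worth, the paper itself supplies no argument beyond citing \cite[Prop.~12]{KW-EKL} and \cite[Lemma~5.5]{Hoyois-lefschetz}, and you have correctly located the crux in exactly the place those references address.

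The gap is that this crux is the proposition, and your proposal establishes it only by declaring it to be ``exactly the content'' of the works being cited; nothing in your write-up actually produces the required $\A^1$-homotopy identifying the deformation-to-the-normal-bundle zig-zag with the collapse map for the trivialized tangent space. That step is where the real work lives (in Hoyois and Kass--Wickelgren it is carried out by explicit coordinate bookkeeping in the blow-up together with explicit naive homotopies, e.g.\ showing the relevant linear automorphisms of the Thom space act trivially up to $\A^1$-homotopy), and a blind proof must supply it rather than name it. Your suggested fallback does not repair this: invoking that $\deg^{\A^1}$ is an isomorphism for $n\ge 2$ merely rephrases the desired equality of degrees; rank and discriminant do not determine an element of $\GW(k)$ over a general field; and real/complex realization via Diagram~\ref{fig:compatibility-diagram} detects only rank and signature and is unavailable over, say, finite fields or $p$-adic fields, so it cannot certify equality in $\GW(k)$. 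A smaller point: your reduction ``translate $x$ to $0$'' presumes $x$ is $k$-rational, which is fine for comparing the two definitions on their common domain, but note that the cited Proposition~12 also treats non-rational closed points, which requires an additional argument beyond what you sketch.
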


Equation~\ref{eqn:local-deg-P1} admits the following generalization to endomorphisms of affine space.
\begin{proposition}\label{prop:local-deg-tr-Jac}\cite[Proposition~15]{KW-EKL} Let $f: \A^n_k \to \A^n_k$, assume that $f$ is \'etale at a closed point $x \in \A^n_k$, and assume that that $f(x) = y$ is $k$-rational and that $x$ is isolated in its fiber. Then the local degree is given by
\begin{align*}
    \deg_x^{\A^1}(f) = \Tr_{k(x)/k} \left\langle \Jac(f)(x) \right\rangle.
\end{align*}
\end{proposition}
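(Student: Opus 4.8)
The plan is to reduce to the two extreme cases: the case where $x$ is $k$-rational, and the case where $f$ behaves like the separable field extension $\Spec k(x) \to \Spec k$, and then glue them together using the trace. First I would handle the $k$-rational case $k(x) = k$. Here the point is that \'etaleness of $f$ at $x$ means, after choosing coordinates, that $\Jac(f)(x) \neq 0$, so $f$ is a local isomorphism at $x$ in the Nisnevich topology. Using the second definition of the local degree from Definition~\ref{def:local-a1-deg} (the composite through purity), we may choose the Zariski open $U$ small enough that $\bar f \colon U/(U \setminus \{x\}) \to \A^n_k/(\A^n_k \setminus \{y\})$ is modelled, after the purity identifications $U/(U\setminus\{x\}) \simeq \Th(T_x \A^n_k)$ and $\A^n_k/(\A^n_k \setminus \{y\}) \simeq \Th(T_y \A^n_k)$, by the Thom space of the linear map $df_x \colon T_x \A^n_k \to T_y \A^n_k$. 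Morel's computation of the degree homomorphism on $[\P^n_k/\P^{n-1}_k, \P^n_k/\P^{n-1}_k]_{\A^1} \xrightarrow{\sim} \GW(k)$ identifies the class of $\Th(A)$ for an invertible matrix $A \in \GL_n(k)$ with $\gw{\det A}$, which here is $\gw{\Jac(f)(x)}$; this is exactly where I invoke the functoriality of purity and Morel's degree isomorphism in degrees $n \geq 2$ (with the $n=1$ case handled by the Lannes/Cazanave picture of Section~\ref{sec:lannes}).

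Next I would treat the general closed point $x$ with residue field $L = k(x)$ a finite separable extension of $k$. The idea is to factor the relevant neighborhood: since $x$ is isolated in $f^{-1}(y)$, we can shrink to a Zariski open $U$ with $f^{-1}(y) \cap U = \{x\}$, and then base change along $\Spec L \to \Spec k$. Over $L$, the point $x$ acquires an $L$-rational point $\tilde x$ lying over it (together with its conjugates, which split off because $L/k$ is separable), and the previous case applies to give $\deg^{\A^1}_{\tilde x}(f_L) = \gw{\Jac(f)(x)} \in \GW(L)$, noting that the Jacobian is unchanged by the base change. The content is then that the local degree at $x$ over $k$ equals the transfer $\Tr_{L/k}$ applied to the local degree at $\tilde x$ over $L$. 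This is a transfer/base-change compatibility for the $\A^1$-local degree; at the level of stable homotopy it comes from the fact that the collapse map $\P^n_k/\P^{n-1}_k \to \P^n_k/(\P^n_k \setminus \{x\}) \cong U/(U \setminus \{x\})$, when $x$ has residue field $L$, is the stable transfer associated to $\Spec L \to \Spec k$ (precomposed with the $L$-rational collapse), so that the composite defining $\deg^{\A^1}_x(f)$ factors through $\Tr_{L/k}$.

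The main obstacle is precisely this last compatibility: identifying the collapse map onto the closed point of residue field $L$ with the stable transfer for $L/k$, and checking that the trace map $\Tr_{L/k} \colon \GW(L) \to \GW(k)$ on Grothendieck--Witt rings that appears in the statement is the one induced by this stable transfer. The cleanest route is to cite the transfer formalism on stable homotopy groups referenced after the definition of $\Tr_{L/k}$ in Section~\ref{sec:GW}, together with Morel's identification of the endomorphisms of the motivic sphere with $\GW$; alternatively one can bypass the stable-homotopy transfer entirely and argue on the $\A^1$-degree directly, using that the \'etale local structure of $f$ near $x$ (via the local structure theorem for maps of smooth schemes, \cite[Proposition~7.2]{AE}) lets us replace $f$ near $x$ by the map $\A^n_L \to \A^n_k$ built from a single polynomial whose local degree was already computed by Morel to be $\Tr_{L/k}\gw{\cdot}$ of the relevant Jacobian. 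I would also remark that the two definitions of $\deg^{\A^1}_x f$ are interchangeable here by the preceding proposition, and that the independence of all choices (the open set $U$, the coordinates) is subsumed in the fact that the degree homomorphism is well-defined on $[\P^n_k/\P^{n-1}_k, \P^n_k/\P^{n-1}_k]_{\A^1}$.
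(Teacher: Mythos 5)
The paper itself offers no proof of this proposition: it is quoted directly from \cite[Proposition~15]{KW-EKL}, so the only comparison available is with the cited literature, and your outline does follow that standard route (linearize at a rational \'etale point via purity, then handle a separable residue field by a transfer). However, two steps carry essentially all of the content and are only gestured at. In the $k$-rational case, ``choose $U$ small enough that $\bar{f}$ is modelled by the Thom space of $df_x$'' is not something shrinking a Zariski open can accomplish, since $f$ never literally equals its linearization on a Zariski neighborhood. What is needed is either the naturality of the purity equivalence with respect to \'etale morphisms of smooth closed pairs, applied to $f\colon (U,x)\to(\A^n_k,y)$ (a morphism of pairs because $f^{-1}(y)\cap U=\{x\}$), or an explicit $\A^1$-homotopy from $f$ to $df_x$ through maps of pairs $(U,U\minus\{x\})\to(\A^n_k,\A^n_k\minus\{y\})$, which requires keeping the zero locus isolated throughout the homotopy. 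Granting that, your appeal to Morel's computation that $A\in\GL_n(k)$ acts on $\A^n_k/(\A^n_k\minus\{0\})$ through $\gw{\det A}$ correctly finishes this case.

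The more serious issue is the non-rational case. The identity $\deg^{\A^1}_x(f)=\Tr_{k(x)/k}\deg^{\A^1}_{\tilde{x}}(f_{k(x)})$ is not a formal base-change statement; it is a theorem, and it is precisely what the remark following the proposition cites \cite{aws-paper} for. The route through stable transfers needs two nontrivial inputs that you would have to prove or cite precisely: that the collapse map $\P^n_k/\P^{n-1}_k\to\P^n_k/(\P^n_k\minus\{x\})$ onto a closed point with residue field $L=k(x)$ is the stable transfer attached to $\Spec L\to\Spec k$, and that under Morel's identification of endomorphisms of the motivic sphere with $\GW$, this stable transfer induces the trace $\Tr_{L/k}\colon\GW(L)\to\GW(k)$ of Section~\ref{sec:GW} (Morel's comparison of cohomological and geometric transfers, the same input used by Hoyois and by \cite{KW-EKL}). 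Your fallback --- the \'etale local structure theorem plus ``a computation of Morel'' --- is too vague to substitute: within this paper the trace formula of Equation~\ref{eqn:local-deg-P1} is the \emph{definition} of the naive local degree at a non-rational point of $\P^1_k$, not a computation of Morel's local $\A^1$-degree, so appealing to it there would be circular. In short, the skeleton is right and matches how the result is established in the literature, but the heart of the proposition --- collapse equals transfer, and the transfer realizes $\Tr_{k(x)/k}$ --- remains to be supplied rather than asserted.
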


\begin{remark} At a non-rational point $p$ whose residue field $k(p)|k$ is a finite separable extension of the ground field, the local $\A^1$-degree can be computed by base changing to $k(p)$ to compute the local degree rationally, and applying the field trace $\Tr_{k(p)/k}$ to obtain a well-defined element of $\GW(k)$ \cite{aws-paper}.
\end{remark}

\subsection{The Eisenbud--Khimshiashvili--Levine signature formula}\label{sec:EKL}
Given a morphism $f = (f_1, \ldots, f_n): \A^n_k \to \A^n_k$ with an isolated zero at the origin, we may associate to it a certain isomorphism class of bilinear forms $w_0^\EKL(f)$, called the \textit{Eisenbud--Levine--Khimshiashvili (EKL) class}. This was studied by Eisenbud and Levine, and independently by Khimshiashvili, in the case where $f$ is a smooth endomorphism of $\R^n$ \cite{Eisenbud-Levine,Khimshiashvili}. They ascertained that the degree $\deg^\top_0 f$ can be computed as the signature of the form $w_0^\EKL(f)$. If $f$ is furthermore assumed to be real analytic, the rank of this form recovers the degree of the complexification $f_\C$ \cite{Pal67}. This bilinear form $w_0^\EKL(f)$ can be defined over an arbitrary field $k$, and in this setting Eisenbud asked the following question: does $w_0^\EKL(f)$ have any topological interpretation? We will see that the answer is yes, via work of Kass and Wickelgren \cite{KW-EKL}.

Suppose that $f = (f_1, \ldots, f_n): \A^n_k \to \A^n_k$ has an isolated zero at the origin, and define the local $k$-algebra
\begin{align*}
    Q_0(f) := \frac{k[x_1, \ldots, x_n]_{(x_1, \ldots, x_n)}}{(f_1, \ldots, f_n)}.
\end{align*}

We may pick polynomials $a_{ij}$ so that, for each $i$, we have the equality
\begin{align*}
    f_i(x_1, \ldots, x_n) &= f_i(0) + \sum_{j=1}^n a_{ij} \cdot x_j.
\end{align*}
By taking their determinant, we define $E_0(f) := \det(a_{ij})$ as an element of $Q_0(f)$, which we refer to as the \textit{distinguished socle element} of the local algebra $Q_0(f)$. We remark that when $\Jac(f)$ is a nonzero element of $Q_0(f)$, one has the equality \cite[4.7~Korollar]{SchejaStorch}
\begin{align*}
    \Jac(f) = \dim_k(Q_0(f)) \cdot E_0(f).
\end{align*}

We then pick $\eta$ to be any $k$-linear vector space homomorphism $\eta: Q_0(f) \to k$ satisfying $\eta(E_0(f)) = 1$. One may check that the following bilinear form
\begin{align*}
    Q_0(f) \times Q_0(f) &\to k \\
    (u,v) &\mapsto \eta(u\cdot v)
\end{align*}
is non-degenerate and its isomorphism class is independent of the choice of $\eta$ \cite[Propositions~3.4,3.5]{Eisenbud-Levine}, \cite[\S3]{KW-EKL}. The class of this form in $\GW(k)$ is referred to as the EKL class, and denoted by $w_0^\EKL(f)$.

\begin{example} If $f: \A_k^1 \to \A_k^1$ is given by $z\mapsto z^2$, we may see that $Q_0(f) = k[z]_{(z)}\Big/(z^2)$. We see that $f$ has an isolated zero at the origin, and that
\begin{align*}
    f = f(0) + x\cdot x,
\end{align*}
hence $E_0(f) = x$. We determine $\eta: Q_0(f)\to k$ on a basis for $Q_0(f)$ by setting $\eta(x) =1$ and $\eta(1) =0$. Then we compute the EKL form via its Gram matrix as:
\begin{align*}
    \begin{pmatrix} \eta(1\cdot 1) & \eta(1\cdot x) \\ \eta(x\cdot 1) & \eta(x\cdot x) \end{pmatrix} &= \begin{pmatrix} 0 & 1 \\ 1 & 0 \end{pmatrix} = \H.
\end{align*}
\end{example}

\begin{theorem}\label{kw-deg-ekl} If $f:\A^n_k \to \A^n_k$ is any endomorphism of affine space with an isolated zero at the origin, there is an equality $\deg^{\A^1}_0 f = w_0^{\EKL}(f)$ in $\GW(k)$ \cite{KW-EKL}.
\end{theorem}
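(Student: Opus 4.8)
The plan is to prove the two quantities agree by showing each is computed by the same local recipe, and leveraging the cases where one side is easier to pin down. The key structural input is that both $\deg^{\A^1}_0$ and $w_0^{\EKL}$ are well-behaved under base change along separable extensions and under deformation: specifically, the local $\A^1$-degree satisfies $\deg^{\A^1}_0 f = \sum_i \deg^{\A^1}_{x_i} f$ when one perturbs $f$ so that the origin splits into a collection of simple zeros $x_i$ (this is the analog of the topological local-to-global principle, and it holds because the cofiber construction and purity are compatible with Nisnevich-local deformations), and similarly the EKL class is stable under such deformations by the Scheja--Storch theory of the socle form. So the strategy is: (1) reduce to the case where $k$ is a perfect (or even infinite) field by a base-change argument, since both sides commute with $\Tr_{k'/k}$ and with field extensions; (2) deform $f$ to a map $\tilde f$ with only simple (étale) zeros near the origin over a suitable extension; (3) at each simple zero $x_i$, invoke Proposition~\ref{prop:local-deg-tr-Jac} to compute $\deg^{\A^1}_{x_i}\tilde f = \Tr_{k(x_i)/k}\langle \Jac(\tilde f)(x_i)\rangle$, and observe that the EKL form at a simple zero is exactly the rank-one form $\langle\Jac(\tilde f)(x_i)\rangle$ (since $Q_{x_i}(\tilde f) \cong k(x_i)$ and the socle element is $\Jac$ itself up to the relation $\Jac = \dim_k Q \cdot E_0$, which gives $\dim = 1$); (4) sum over $i$ and use that both $\deg^{\A^1}_0$ and $w_0^{\EKL}$ are invariant under the deformation to conclude $\deg^{\A^1}_0 f = w_0^{\EKL}(f)$.

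The delicate point is establishing the deformation invariance of $\deg^{\A^1}_0$ in a way that is genuinely local and does not secretly require the theorem being proved. Here I would work over the ring $k[[t]]$ (or a henselian base), forming a family $f_t$ with $f_0 = f$ and $f_t$ having simple zeros for $t\neq 0$; one then needs that the local degree is constant in the family, which follows from the fact that the total space of the zero-locus is finite and flat over the base, so the cofiber $U/(U\setminus Z)$ fits into a family over $\A^1$, and $\A^1$-invariance of the homotopy category forces the degree to be constant. Alternatively — and this is probably the cleaner route, and likely the one Kass--Wickelgren take — one can avoid deformation entirely on the EKL side by directly identifying the local $\A^1$-degree with the Scheja--Storch form: the purity isomorphism $U/(U\setminus\{0\}) \simeq \Th(T_0\A^n_k)$ together with the algebraic description of the Thom class lets one write $\deg^{\A^1}_0 f$ as a pushforward in a suitable cohomology theory (e.g. Hermitian $K$-theory or a Chow--Witt group), and the Scheja--Storch construction is precisely the explicit formula for such a pushforward along a finite flat map. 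Then $w_0^{\EKL}$ being the class of the Scheja--Storch form finishes the proof without any perturbation argument.

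I expect the main obstacle to be making the comparison in step (3)/(4) — or in the alternative route, the cohomological pushforward identification — fully rigorous, because it requires knowing that Morel's degree map is compatible with the six-functor formalism (proper pushforward, purity, Thom isomorphisms) in a way that matches the classical commutative-algebra construction of Scheja--Storch. The well-definedness of $w_0^{\EKL}$ (independence of the choice of $\eta$, and of the presentation $f_i = f_i(0) + \sum a_{ij} x_j$) is already granted in the excerpt, so that is not the issue; the issue is that the $\A^1$-degree is defined homotopy-theoretically while the EKL class is defined by explicit linear algebra, and bridging them requires either a traceability/base-change reduction to the étale case (step 1--3 above) or a direct duality computation. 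I would pursue the reduction-to-étale-points argument as the primary line, since it uses only tools already in hand (Proposition~\ref{prop:local-deg-tr-Jac}, the socle relation $\Jac = \dim_k Q_0(f)\cdot E_0(f)$, and additivity of the local degree), relegating the cohomological pushforward perspective to a remark.
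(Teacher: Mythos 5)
Your overall skeleton (get to \'etale points, where $\deg^{\A^1}$ and the EKL form both become $\Tr_{k(x)/k}\langle \Jac(x)\rangle$, then transfer back by an invariance statement) is in the right spirit, but your primary route hinges on a step that is left unproved and is genuinely hard: the claim that if you perturb $f$ so that the zero at the origin splits into simple zeros $x_i$, then $\deg^{\A^1}_0 f=\sum_i\deg^{\A^1}_{x_i}\tilde f$. Over a general field there is no small metric ball confining the split zeros; the local degree is defined via a Zariski neighborhood, purity, and a chosen rational point of the target, and there is no off-the-shelf ``conservation of number'' for local $\A^1$-degrees under deformation. Your proposed justification (a family over $k[[t]]$ or a henselian base, with ``$\A^1$-invariance of the homotopy category forcing the degree to be constant'') does not work as stated: the cofibers $U/(U\setminus Z_t)$ do not form a family you can identify fiberwise with motivic spheres without exactly the kind of finiteness/flatness-plus-specialization argument you are trying to avoid, and a degree theory over the base $k[[t]]$ valued in $\GW$ of the base is not among the tools in hand. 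In effect, the deformation invariance you need is of comparable difficulty to the theorem itself. (Your ``alternative cleaner route'' via pushforwards in Hermitian $K$-theory or Chow--Witt groups is also not what Kass--Wickelgren do; that duality-theoretic comparison belongs to later literature.)

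The paper's proof avoids splitting the zero at the origin altogether. It uses only \emph{finite determinacy}: adding $g$ with $g_i\in\mathfrak m_0^N$ for $N\gg0$ changes neither $\deg^{\A^1}_0$ nor $w_0^{\EKL}$, and this much weaker invariance is provable directly. One then arranges (after such a modification) that $f$ extends to a finite flat $F:\P^n_k\to\P^n_k$ with $F$ \'etale on $F^{-1}(0)\setminus\{0\}$, and compares two \emph{whole fibers} rather than deforming the map: the Scheja--Storch construction packages the EKL forms at all points of a fiber into a family of symmetric bilinear forms over the target, Harder's theorem gives $\sum_{x\in F^{-1}(y)}w^{\EKL}_x(F)=\sum_{x\in F^{-1}(0)}w^{\EKL}_x(F)$, while on the homotopy side the identity $\sum_{x\in F^{-1}(y)}\deg^{\A^1}_xF=\deg^{\A^1}F=\sum_{x\in F^{-1}(0)}\deg^{\A^1}_xF$ (independence of the global degree from the chosen fiber) plays the corresponding role. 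Choosing $y$ with \'etale fiber, both sums over $F^{-1}(y)$ agree termwise by the trace--Jacobian formula; the \'etale points of $F^{-1}(0)$ then cancel from both sides, leaving $w_0^{\EKL}(F)=\deg^{\A^1}_0F$, and finite determinacy transfers this back to $f$. So the two invariance statements you tried to establish homotopy-theoretically are instead supplied by Harder's theorem (algebra side) and by the already-available global-degree formalism (homotopy side); if you want to salvage your argument, you should replace your deformation-invariance step with this pair of inputs.
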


In particular we observe that the compatibility stated in Diagram~\ref{fig:compatibility-diagram} is justified by this theorem, combined with the results of Eisenbud--Khimshiashvili--Levine and Palamodov. Moreover we remark that the EKL form can be defined at any $k$-rational point, and an analogous statement to Theorem~\ref{kw-deg-ekl} holds in this context.

\begin{exercise}\label{exercise-wekl} $\ $
\begin{enumerate}

	\item Compute the degree of $f : \A_k^2 \to \A_k^2$, given as $f(x,y) = (4x^3, 2y)$ in the case where $\char(k)\neq 2$.
	
	\item Supposing $f$ is \'etale at the origin 0, show that $w_0^\EKL(f) = \left\langle \Jac(f)(0) \right\rangle$ is an equality in $\GW(k)$. Show furthermore that an analogous equality holds at any $k$-rational point $x$.
\end{enumerate}
\end{exercise}
As a generalization of Exercise~\ref{exercise-wekl}(2), one may show that if $f$ is \'etale at a point $x$, one has the following equality in $\GW(k)$
\begin{equation}\label{eqn:etale-EKL-local-deg}
\begin{aligned}
    w_x^\EKL(f) = \Tr_{k(x)/k} \left\langle \Jac(f)(x) \right\rangle.
\end{aligned}
\end{equation}
This is shown using Galois descent, as in \cite[Lemma~33]{KW-EKL}.

\subsection{Sketch of proof for Theorem~\ref{kw-deg-ekl}}\label{sec:EKL-proof}

\textit{Step 1}: We can see that $\deg^{\A^1}_0 f$ and $w_0^\EKL(f)$ are finitely determined in the sense that they are unchanged by changing $f$ to $f+g$, with $g = (g_1, \ldots, g_n)$, and $g_i \in \mathfrak{m}_0^N$ for sufficiently large $N$, where $\mathfrak{m}_0:= (x_1, \ldots, x_n)$ denotes the maximal ideal at the origin \cite[Lemma~17]{KW-EKL}.

\textit{Step 2}: By changing $f$ to $f+g$, we may assume that $f$ extends to a finite, flat morphism $F:\P_k^n \to \P_k^n$, where $F^{-1}(\A_k^n) \subseteq \A_k^n$ and $\left. F\right|_{F^{-1}(0) \minus \{ 0 \}}$ is \'etale \cite[Proposition~23]{KW-EKL}.

\begin{proposition}\label{prop:SchejaStorch} \textit{(Scheja-Storch)} \cite[\S3, pp.180---182]{SchejaStorch} We have that $w_0^\EKL(f)$ is a direct summand of the fiber at 0 of a family of bilinear forms over $\A_k^n$, which we construct below.
\end{proposition}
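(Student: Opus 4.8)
The plan is to make explicit the \emph{relative} form of the EKL construction, due to Scheja and Storch \cite[\S3]{SchejaStorch} (compare \cite[\S4]{KW-EKL}), of which the pairing defining $w_0^\EKL(f)$ is a fibrewise restriction. By Step~2 we may assume $F\colon \A^n_k \to \A^n_k$ is finite and flat; write $x = (x_1,\dots,x_n)$ for coordinates on the source, $y = (y_1,\dots,y_n)$ for coordinates on the target, and set $R := k[y]$, $A := k[x]$, so that the ring map $R \to A$ induced by $F$ sends $y_i \mapsto f_i(x)$. Finiteness and flatness make $A$ a locally free $R$-module of finite rank, and a symmetric nondegenerate $R$-bilinear pairing $A \times A \to R$ is precisely a family of symmetric bilinear forms over $\A^n_k = \Spec R$, whose fibre at a point $y_0$ is its base change along $R \to \kappa(y_0)$.

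First I would construct the family using the Scheja--Storch relative Frobenius structure on $A$ over $R$. Since $A \otimes_R A \cong k[x, x']/(f_1(x) - f_1(x'), \dots, f_n(x) - f_n(x'))$, and since the polynomial $f_i(x) - f_i(x') \in k[x, x']$ vanishes on the diagonal, one may choose $a_{ij} \in k[x, x']$ with
\[
 f_i(x) - f_i(x') \;=\; \sum_{j=1}^n a_{ij}(x, x')\,(x_j - x'_j),
\]
and let $\Delta \in A \otimes_R A$ be the image of $\det(a_{ij})$. Scheja and Storch show that $\Delta$ gives rise to a canonical isomorphism of $A$-modules $\Hom_R(A, R) \cong A$; let $\eta\colon A \to R$ be the $R$-linear functional corresponding to $1 \in A$ under this isomorphism. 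Then $\beta\colon A \times A \to R$, $\beta(u, v) := \eta(uv)$, is a symmetric, $R$-bilinear, nondegenerate pairing --- equivalently, $A$ is a Frobenius $R$-algebra with Frobenius trace $\eta$ --- and this $\beta$ is the asserted family of symmetric bilinear forms over $\A^n_k$.

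It then remains to compute the fibre of $\beta$ over the origin and to locate $w_0^\EKL(f)$ inside it. Base change along $R \to R/(y_1,\dots,y_n) = k$ yields the Artinian $k$-algebra $A \otimes_R k \cong k[x]/(f_1,\dots,f_n)$ equipped with the pairing $\bar\beta(u, v) = \bar\eta(uv)$, where $\bar\eta$ denotes the reduction of $\eta$. By Step~2 this algebra is supported on $F^{-1}(0) \cap \A^n_k$, so its primitive idempotents split it as a product $\prod_p Q_p(f)$ over the points $p$ of $F^{-1}(0) \cap \A^n_k$; if $e$ is the idempotent supported at the origin, then $\bar\beta(eu, (1-e)v) = \bar\eta\big(e(1-e)uv\big) = 0$, so this decomposition is $\bar\beta$-orthogonal and $\bar\beta$ is the orthogonal direct sum of the forms $(u,v) \mapsto \bar\eta_p(uv)$ on $Q_p(f)$, where $\bar\eta_p := \bar\eta|_{Q_p(f)}$. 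The key compatibility --- part of the Scheja--Storch package --- is that the origin-component functional $\bar\eta_0$ is a \emph{distinguished} EKL functional for $f$, i.e.\ $\bar\eta_0(E_0(f)) = 1$. Since the EKL class is independent of the choice of such a functional, the origin-summand of $\bar\beta$ is exactly $w_0^\EKL(f)$, which proves the proposition.

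The main obstacle is establishing this last compatibility, i.e.\ the Scheja--Storch isomorphism $\Hom_R(A, R) \cong A$ together with the assertion that the distinguished functional $\eta$ it singles out reduces, along $R \to k$, to a functional whose restriction to the origin-component pairs $E_0(f)$ to $1$. This rests on the $f_i(x) - y_i$ forming a regular sequence --- so that $A$ is a relative complete intersection, hence Gorenstein, over $R$ --- and on a careful tracking of how the relative socle datum $\Delta$ specializes to the absolute distinguished socle element $E_0(f)$; both are treated in \cite[\S3]{SchejaStorch}. The remaining ingredients --- local freeness of $A$ over $R$, the idempotent decomposition of $A \otimes_R k$, and independence of $w_0^\EKL(f)$ from the functional --- have already been recorded above or in Section~\ref{sec:EKL}.
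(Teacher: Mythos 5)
Your proposal is correct and follows essentially the same route as the paper: you build the Scheja--Storch family via the regular sequence, the element $\Delta=\det(a_{ij})$, and the resulting isomorphism $\Hom_R(A,R)\cong A$, then specialize at the origin, split the Artinian fiber along its idempotents into an orthogonal direct sum over the points of $F^{-1}(0)$, and identify the summand at the origin with $w_0^{\EKL}(f)$. If anything, you are more explicit than the paper's sketch about the one substantive point --- that the specialized functional $\bar\eta_0$ sends the distinguished socle element $E_0(f)$ to $1$, so that the independence of the EKL class from the choice of functional applies --- which the paper leaves implicit in its citation of Scheja--Storch.
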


We will prove Proposition~\ref{prop:SchejaStorch} following the construction of this family of bilinear forms.

\begin{customenvironment}{The Scheja-Storch construction} Let $F: \Spec(P) \to \Spec(A)$, where
\begin{align*}
	P &= k[x_1, \ldots, x_n]\\
	A &= k[y_1, \ldots, y_n].
\end{align*}

One may show that the collection $\{t_1,\ldots, t_n\}$ is a regular sequence in $A[x_1, \ldots, x_n]$, where $t_i := y_i - F_i (x_1, \ldots, x_n)$. Then
\begin{align*}
	B = A[x_1, \ldots, x_n] \big/ \lrangle{ t_1, \ldots, t_n}
\end{align*}
is a relative complete intersection, which parametrizes the fibers of $F$. This regular sequence determines a canonical isomorphism \cite[Satz 3.3]{SchejaStorch}
\begin{align*}
	\theta : \Hom_A(B,A) \xto{\cong} B,
\end{align*}
via the following procedure: we may first express
\begin{align*}
	t_j \otimes 1 - 1 \otimes t_j = \sum_{i=1}^n a_{ij} \left( x_i \otimes 1 - 1 \otimes x_i \right),
\end{align*}
where each $a_{ij}$ is an element of $A[x_1, \ldots, x_n] \otimes_A A[x_1, \ldots, x_n]$. Under the projection map $A[x_1,\ldots, x_n] \otimes_A A[x_1,\ldots, x_n] \to B\otimes_A B$, we have that $\det\left( a_{ij} \right)$ is mapped to some element $\Delta$. We now consider the bijection
\begin{align*}
	B \otimes_A B &\to \Hom_A( \Hom_A(B,A), B) \\
	b \otimes c &\mapsto \left( \phi \mapsto \phi(b) c \right),
\end{align*}

and define $\theta$ to be the image of $\Delta$. We remark that a priori $\theta$ is an $A$-module homomorphism between $\Hom_A(B,A)$ and $B$, which both have $B$-module structures. It is in fact a $B$-module homomorphism, and is moreover an isomorphism by \cite[Satz~3.3]{SchejaStorch}. Defining $\eta = \theta^{-1}(1)$, we have that $\eta$ determines a bilinear form, which we denote by $w$
\begin{align*}
	B \otimes_A B &\to A \\
	b \otimes c &\xmapsto{w} \eta(bc).
\end{align*}
\end{customenvironment}

\textit{Proof of Proposition \ref{prop:SchejaStorch}}: We note that, when $y_1 = \ldots = y_n = 0$, $\Spec(B)$ is the fiber of $F$ over 0, consisting of a discrete set of points. This corresponds to a disjoint union of schemes. If $b$ and $c$ lie in different components, then their product is zero. This implies that the bilinear form $w$ decomposes into an orthogonal direct sum of forms over each factor in $F^{-1}(0)$. These factors correspond to EKL forms at each point in the fiber $F^{-1}(0)$, and in particular over $0 \in F^{-1}(0)$, we recover the EKL form $w_0^\EKL(F)$.

\hfill\qed

The following theorem will allow us to relate the EKL forms at various points in the fiber $F^{-1}(0)$.

\begin{theorem} \textit{(Harder's Theorem)} \cite[VII.3.13]{Lam-serre-conj} A family of symmetric bilinear forms over $\A^1_k$ is constant (respectively, has constant specialization to $k$-points) for characteristic not equal to 2 (resp. any $k$). In particular when $\char(k) \ne 2$, for any finite $k[t]$-module $M$, we have that the family of bilinear forms $M\times_{k[t]} M \to k[t]$ is pulled back from some bilinear form $N\times_k N \to k$ via the unique morphism of schemes $\A^1_k \to \Spec(k)$.
\end{theorem}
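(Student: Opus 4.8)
The plan is to reduce the statement about constancy of a family of symmetric bilinear forms over $\A^1_k$ to an assertion about the Grothendieck--Witt group $\GW(k[t])$, and then invoke the well-known homotopy invariance of $\GW$ (equivalently, of the Witt group) for polynomial rings over a field. Concretely, a family of symmetric bilinear forms over $\A^1_k = \Spec k[t]$ supported on a finite $k[t]$-module is exactly a class in $\GW(k[t])$, and the specialization to a $k$-point $t = a$ is the image under the ring map $k[t] \to k$, $t \mapsto a$. The first step is therefore to set up this dictionary carefully: identify the family with an element $\beta \in \GW(k[t])$, and observe that the two evaluation maps $\mathrm{ev}_0, \mathrm{ev}_1 : \GW(k[t]) \to \GW(k)$ both split the structure map $\GW(k) \to \GW(k[t])$ induced by $k \hookrightarrow k[t]$.

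The second step is to prove that $\GW(k) \to \GW(k[t])$ is an isomorphism when $\char(k) \neq 2$. This is Harder's classical result on the Witt group of a polynomial ring over a field; combined with the rank homomorphism (which is visibly homotopy invariant) it upgrades to $\GW$. I would either cite this directly from \cite[VII.3.13]{Lam-serre-conj} or sketch the argument: surjectivity is clear since every form over $k$ extends; injectivity uses that a form over $k$ becoming metabolic over $k[t]$ must already be metabolic over $k$, which one proves by a degree/leading-coefficient argument on the hyperbolic sublattice, or by specializing at a $k$-point and using Witt cancellation over $k$. Once $\GW(k) \xrightarrow{\sim} \GW(k[t])$ is established, any $\beta \in \GW(k[t])$ is pulled back from a unique $N \in \GW(k)$ along $\A^1_k \to \Spec k$, which is precisely the claimed statement, and then automatically $\mathrm{ev}_a(\beta) = N$ for every $k$-point $a$, giving constant specialization.

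The third step handles the characteristic $2$ case, where one only claims constant specialization to $k$-points rather than constancy of the whole family. Here I would argue directly: the forms $\mathrm{ev}_a(\beta)$ for varying $a \in k$ all have the same rank (rank is locally constant, hence constant over the connected scheme $\A^1_k$), and I would show their classes in $\GW(k)$ agree by a continuity/specialization argument — for instance, by exhibiting the isomorphism class as being detected by invariants (rank, and discriminant-type data) that are regular functions on $\A^1_k$ and hence constant, or by directly producing an isomorphism between the fibers at any two $k$-points using that the family is defined over $k[t]$ and the locus where two adjacent fibers fail to be isomorphic is closed of codimension $\geq 1$, hence empty once it misses the generic point.

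The main obstacle I anticipate is the second step — genuinely proving (rather than citing) that $\GW(k[t]) \cong \GW(k)$, since the injectivity half is the substance of Harder's theorem and requires the degree-lowering argument on isotropic vectors in $k[t]^n$; the characteristic $2$ subtlety in step three is a secondary but real complication, since Witt cancellation and the usual diagonalization arguments break down there, forcing the weaker "constant specialization" formulation. In an expository account it is entirely reasonable to quote \cite[VII.3.13]{Lam-serre-conj} for the hard input and spend the proof energy on the dictionary of step one, which is what is actually needed downstream for relating the EKL forms at the points of $F^{-1}(0)$.
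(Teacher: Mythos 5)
The paper offers no proof of this theorem: it is imported wholesale from \cite[VII.3.13]{Lam-serre-conj}, and what is actually used downstream (Step 3 of the EKL argument) is only the consequence that the specializations of the Scheja--Storch family at the $k$-points $0$ and $y$ agree in $\GW(k)$. So your overall plan --- cite the hard input and spend the effort on the dictionary between families on finite free $k[t]$-modules and classes over $k[t]$ --- is consistent with the paper's treatment, and your observation that both evaluation maps $\GW(k[t])\to\GW(k)$ are retractions of the base-change map, hence coincide once that map is an isomorphism, is exactly the form of the statement needed for the application.

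Two points need repair, however. First, the logical order of your Step 2 is backwards as a proof of the theorem \emph{as stated}: an isomorphism $\GW(k)\cong\GW(k[t])$ only says that a unimodular symmetric bilinear space over $k[t]$ is \emph{stably} extended from $k$; to remove the auxiliary summand you need cancellation over $k[t]$, which in practice is deduced from Harder's extension theorem (which is precisely the content of \cite[VII.3.13]{Lam-serre-conj}) rather than the other way around. If you cite that result you already have the ``pulled back'' clause and the $\GW$ detour is unnecessary; if instead you insist on proving the $\GW$ statement first, the injectivity argument you gesture at is essentially Harder's degree-lowering argument anyway, so nothing is saved. Second, your characteristic~$2$ step is a genuine gap: rank together with ``discriminant-type data'' does not determine the isomorphism class of a symmetric bilinear form over a general field, and the ``locus where adjacent fibers fail to be isomorphic is closed and misses the generic point'' heuristic is not an argument --- isomorphism loci of fibers over a non-algebraically-closed field are not controlled this way, and diagonalization and Witt cancellation both degrade in characteristic~$2$, which is exactly why the theorem only asserts constant specialization there; that case needs its own argument or its own citation. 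Finally, in setting up the dictionary you should make explicit that ``finite $k[t]$-module'' means finitely generated projective (hence free) and the form unimodular: for a degenerate family such as $\gw{t}$ the conclusion fails, since its specializations at $k$-points $a\neq 0$ vary with the square class of $a$.
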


\textit{Step 3}: We choose $y$ so that $\left. F \right|_{F^{-1} (y)}$ is \'etale. One may use the generalization of Exercise~\ref{exercise-wekl}(2) as stated in Equation~\ref{eqn:etale-EKL-local-deg}, combined with Proposition~\ref{prop:local-deg-tr-Jac} to see that
\begin{align*}
	\sum_{x\in F^{-1}(y)} w_x^\EKL(F) = \sum_{x \in F^{-1} (y)} \deg^{\A^1}_x F.
\end{align*}

By Harder's theorem, we have that $\sum_{x\in F^{-1}(y)} w_x^\EKL(F) = \sum_{x\in F^{-1}(0)} w_x^\EKL(F)$, and by the local formula for degree, we see that
\begin{align*}
	\sum_{x \in F^{-1} (y)} \deg^{\A^1}_x F = \deg^{\A^1} F = \sum_{x\in F^{-1}(0)} \deg^{\A^1}_x F.
\end{align*}

Thus $\sum_{x\in F^{-1}(0)} w_x^\EKL(F) = \sum_{x \in F^{-1} (0)} \deg^{\A^1}_x F$. Since $\left. F \right|_{F^{-1}(0) \minus \{ 0 \} }$ is \'etale, we may iteratively apply the equality in Equation~\ref{eqn:etale-EKL-local-deg} to cancel terms, leaving us with the local degree and EKL form at the origin:
\begin{align*}
	w_0^\EKL(F) = \deg^{\A^1}_0 F.
\end{align*}
Therefore by finite determinacy we recover the desired equality $w_0^\EKL(f) = \deg_0^{\A^1}(f)$. This concludes the proof of Theorem \ref{kw-deg-ekl}.

\subsection{$\A^1$-Milnor numbers}\label{sec:Milnor-no}

The following section is based off of joint work by Jesse Kass and Kirsten Wickelgren \cite[\S8]{KW-EKL}. A variety over a perfect field is generically smooth, although it may admit a singular locus where the dimension of the tangent space exceeds the dimension of the variety, for example a self-intersecting point on a singular elliptic curve. Singularities are generally difficult to study, although certain classes are more tractable than others. There is a particular class of singularities, called \textit{nodes}, which are in some sense the most generic. If $k$ is a field of characteristic not equal to 2, then a node is given by an equation $x_1^2 + \ldots + x_n^2 = 0$ over a separable algebraic closure $\bar{k}$.

Consider a point $p$ on a hypersurface $\{f(x_1,\ldots,x_n) = 0\} \subseteq \A_k^n$. Fix values $a_1, \ldots, a_n$, and consider the family
\begin{align*}
	f(x_1, \ldots, x_n) + a_1 x_1 + \ldots + a_n x_n = t,
\end{align*}
parametrized over the affine $t$-line. This hypersurface bifurcates into nodes over $\bar{k}$. Given any hypersurface $g(x_1, \ldots, x_n)$ with a node at a $k$-rational point $p$, we define the \textit{type} of the node as the element in $\GW(k)$ corresponding to the rank one form represented by the Hessian matrix at $p$:
\begin{align*}
    \type(p) := \left\langle \frac{\partial^2 g}{\partial x_i \partial x_j}(p) \right\rangle.
\end{align*}
In particular, we see that:
\begin{align*}
	\type(x_1^2 + a x_2^2 = 0) &:= \gw{a} \\
	\type\left( \sum_{i=1}^n a_i x_i^2 = 0\right) &:= \gw{ 2^n \prod_{i=1}^n a_i}.
\end{align*}
In the case where we have a node at $p$ with $k(p) = L$, then $L$ is separable over $k$ \cite[Expos\'e XV, Th\'eor\`eme 1.2.6]{Deligne-Katz}, and we define the type of the node as the trace of the type over its residue field. In the examples above, this gives:
\begin{align*}
	\type\left( \sum_{i=1}^n a_i x_i^2 = 0\right) := \Tr_{L/k}\gw{ 2^n \prod_{i=1}^n a_i}.
\end{align*}
Thus the type encodes the field of definition of the node, as well as its tangent direction. In the case where $k=\R$, we can visualize the possible $\R$-rational nodes in degree two as:

\begin{figure}[H]
	\begin{tikzpicture}
        \draw (0,2) -- (2,0);
		\draw (0,0) -- (2,2);
	    \filldraw (1,1) circle[radius=1.5pt];
	    \node [below] at (1,0) {$x_1^2 - x_2^2 = 0$};
        \node[below] at (1,-1) {(a) split};

		\filldraw (4,1) circle[radius=1.5pt];
	    \node [below] at (4,0) {$x_1^2 + x_2^2 = 0$};
        \node[below] at (4,-1) {(b) non-split};
    \end{tikzpicture}
    \centering
\end{figure}

Here we may think of \textit{split} as corresponding to the existence of rational tangent directions, while \textit{non-split} refers to non-rational tangent directions. Over fields that aren't $\R$, it is possible to have many different split nodes.

In the case where $k = \C$, for any $(a_1, \ldots, a_n)$ sufficiently close to 0, it is a classical result that the number of nodes in this family is a constant integer, equal to $\deg_0^{\text{top}} \grad f =: \mu$, which is called the \textit{Milnor number}. This admits a generalization as follows.

\begin{theorem}\label{thm:A1-Milnor-number}\cite[Corollary~45]{KW-Milnor-Number} Assume that $f$ has a single isolated singularity at the origin. Then for a generic $(a_1,\ldots,a_n)$, we have that the sum over nodes on the hypersurface $f + a_1 x_1 + \ldots + a_n x_n = t$ is
\begin{align*}
	\sum_{\substack{\text{nodes } p \\ \text{in family}}} \type(p) = \deg_0^{\A^1} \grad f =: \mu^{\A^1}_0 f.
\end{align*}
We refer to this as the \textit{$\A^1$-Milnor number}. We remark that the classical Milnor number can be recovered by taking the rank of the $\A^1$-Milnor number.
\end{theorem}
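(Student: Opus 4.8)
The plan is to realize the left-hand side as a sum of local $\A^1$-degrees of $\grad f$ at the nodes, and then to invoke conservation of the $\A^1$-degree (the global degree equals the sum of local degrees over any fiber with isolated preimages) to collapse this sum to the single local degree at the origin. First I would observe that the bifurcation family $f + a_1 x_1 + \dots + a_n x_n = t$ has the property that its singular points are exactly the points where $\grad(f + \sum a_i x_i) = \grad f + (a_1,\dots,a_n)$ vanishes, i.e. the points of the fiber $(\grad f)^{-1}(-(a_1,\dots,a_n))$. For generic $(a_1,\dots,a_n)$ this point $-(a_1,\dots,a_n)$ is a regular value of $\grad f$ away from the origin, so $\grad f$ is \'etale at each such node; by Proposition~\ref{prop:local-deg-tr-Jac} (and its base-changed form for non-rational points, using $\Tr_{k(p)/k}$), the local degree of $\grad f$ at a node $p$ is $\Tr_{k(p)/k}\langle \Jac(\grad f)(p)\rangle = \Tr_{k(p)/k}\langle \det(\mathrm{Hess}\, f)(p)\rangle$.

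The second step is to identify this local degree with the type of the node. A node at $p$ is, over $\bar k$, a quadric $\sum a_i x_i^2 = 0$, and its type was defined precisely as $\Tr_{k(p)/k}\langle \det(\partial^2 g/\partial x_i\partial x_j)(p)\rangle$ where $g = f + \sum a_i x_i - t$; since the linear and constant terms do not affect the Hessian, this Hessian determinant is exactly $\det(\mathrm{Hess}\, f)(p) = \Jac(\grad f)(p)$. Hence $\type(p) = \deg^{\A^1}_p \grad f$ for each node $p$ in the family, and summing gives
\[
	\sum_{\substack{\text{nodes } p \\ \text{in family}}} \type(p) = \sum_{p \in (\grad f)^{-1}(-a)} \deg^{\A^1}_p(\grad f),
\]
where $a = (a_1,\dots,a_n)$.

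The third step is conservativity: the right-hand sum is independent of the chosen (generic) value $a$ and equals $\deg^{\A^1}(\grad f)$; this is the global-equals-sum-of-local principle for Morel's degree, and it is the analog of the statement used in Section~\ref{sec:lannes} and proved in \cite[Proposition~14]{KW-EKL}. On the other hand, taking $a = 0$ (after possibly a finite-determinacy modification of $f$ so that $\grad f$ extends to a finite flat map, exactly as in Step~2 of Section~\ref{sec:EKL-proof}), the only preimage in a neighborhood of the origin is the origin itself, since $f$ has a single isolated singularity there; thus $\deg^{\A^1}(\grad f)$ localized near $0$ is $\deg^{\A^1}_0 \grad f$. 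Chaining these equalities yields $\sum \type(p) = \deg^{\A^1}_0 \grad f = \mu^{\A^1}_0 f$, and the final remark about ranks follows from the compatibility of Morel's degree with the topological degree via $\C$-points together with the classical fact $\deg^{\top}_0 \grad f = \mu$.

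The main obstacle I expect is the genericity argument in the first step: one must show that for a Zariski-dense set of $(a_1,\dots,a_n)$, every singular point of the deformed hypersurface is an honest node (i.e. the Hessian there is nondegenerate) and that $\grad f$ is \'etale at each such point, so that the local-degree formula and the identification with $\type$ both apply. This requires a Bertini-type / generic smoothness argument controlling the critical locus of the map $\grad f$, plus the input from \cite[Expos\'e XV, Th\'eor\`eme 1.2.6]{Deligne-Katz} that the residue fields appearing are separable so that the trace construction behaves well; the reduction to a finite flat model and the conservativity statement are then relatively formal given the tools already assembled in Section~\ref{sec:EKL-proof}.
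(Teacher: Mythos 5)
The paper gives no proof of this statement---it simply cites \cite[Corollary~45]{KW-Milnor-Number}---and your outline is essentially the argument of that source, as foreshadowed in Sections~\ref{sec:EKL-proof} and~\ref{sec:Milnor-no}: the nodes of the deformed family are the points of $(\grad f)^{-1}(-a)$, where $\type(p)=\Tr_{k(p)/k}\gw{\det \mathrm{Hess}\,f(p)}=\deg^{\A^1}_p(\grad f)$ by Proposition~\ref{prop:local-deg-tr-Jac} and Equation~\ref{eqn:etale-EKL-local-deg}, and the sum collapses to $\deg^{\A^1}_0\grad f$ by the finite-determinacy plus Harder's-theorem conservation argument of Section~\ref{sec:EKL-proof}. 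The genericity step you flag (all singular points of the generic deformation are honest nodes, $\grad f$ is \'etale there, and the residue fields are separable) is indeed the one substantive ingredient not already assembled here, and it is exactly where the cited work does the remaining work.
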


\begin{example} Let $f(x,y) = x^3 - y^2$, over a field of characteristic not equal to 2 or 3. Let $p=(0,0)$ be a point on the hypersurface $\{f=0\}$. We can compute $\grad f = (3x^2, -2y)$, and then we have that
\begin{align*}
	\deg \grad f &= \deg(3x^2) \cdot \deg(-2y) \\
	&= \begin{pmatrix} 0 & 1/3 \\ 1/3 & 0 \\ \end{pmatrix} \gw{-2} \\
	&= \H.
\end{align*}

This has rank two, so the classical Milnor number is $\mu = 2$. We can take our family to be $y^2 = x^3 + ax + t$. If $a=0$, then we have a node at $0$. In general, for $a\neq 0$, we have nodes at those $t$ with the property that the discriminant of the curve $y^2 = x^3 + ax + t$ vanishes, that is at those $t$ where $\Delta = -16 \left( 4a^3 + 27t^2 \right) = 0$. This has at most two solutions in $t$, which we may denote by $\left\{ x^2 + u_1 y^2 = 0 \right\}$ and $\left\{ x^2 + u_2 y^2 = 0 \right\}$, and we see by Theorem~\ref{thm:A1-Milnor-number} that $\H = \left\langle u_1 \right\rangle + \left\langle u_2 \right\rangle$. This implies, by taking determinants, that $-1$ agrees with $u_1 u_2$ up to squares. This provides us with obstructions to the existence of pairs of nodes of certain types, depending on the choice of field we are working over. For example:

\begin{itemize}
    \item Over $\F_5$, we see that $\left\langle 1 \right\rangle = \left\langle -1 \right\rangle$ in $\GW(\F_5)$ implying that $u_1 u_2$ is always a square. In particular, $u_1$ and $u_2$ cannot have the property that exactly one of them is a non-square, meaning that we cannot bifurcate into a split and a non-split node.
    \item Over $\F_7$ we have that $\gw{1} \neq \gw{-1}$, implying $u_1 u_2$ is a non-square, so we cannot bifurcate into two split or two non-split nodes.
\end{itemize}
\end{example}

\begin{exercise} Compute $\mu^{\A^1}$ for the following ADE singularities over $\Q$:
\begin{center}
	\begin{tabular}{l | l}
	singularity & equation \\
	\hline
	$A_n$	&	$x^2 + y^{n+1}$ \\
	$D_n$	&	$y(x^2 + y^{n-2}) \quad (n\geq 4)$ \\
	$E_6$	&	$x^3 + y^4$ \\
	$E_7$	&	$x ( x^2 + y^3 )$ \\
	$E_8$	&	$x^3 + y^5$.
	\end{tabular}
\end{center}
\end{exercise}

\subsection{An arithmetic count of the lines on a smooth cubic surface}\label{sec:lines}

The following is based off of joint work of Jesse Kass and Kirsten Wickelgren \cite{KW-arithmetic-count}. Let $f \in k[x_0, x_1, x_2, x_3]$ be a homogeneous polynomial of degree three. Consider the following surface 
\[
	V = \{f = 0 \} \subseteq \Proj k[x_0, x_1, x_2 , x_3] = \P_k^3,
\]
and suppose that $V$ is smooth.

\begin{theorem}\label{thm:cayley-salmon} \textit{(Cayley-Salmon Theorem)} When $k=\C$, there are exactly 27 lines on $V$ \cite{cayley_2009}.
\end{theorem}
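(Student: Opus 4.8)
The plan is to realise the lines on $V$ as the zeros of a section of a rank-four vector bundle on the Grassmannian $\Gr(2,4)$ of lines in $\P^3_\C$, to show this section is transverse to the zero section whenever $V$ is smooth (so the lines are finite in number and each counts with multiplicity one), and then to evaluate the resulting Euler number by Schubert calculus.

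Concretely, let $\mathcal{S} \to \Gr(2,4)$ be the tautological rank-two subbundle, whose fibre over a line $\ell = \P(W_\ell)$ is the plane $W_\ell \subseteq \C^4$. A cubic form $f \in H^0(\P^3,\O(3)) = \Sym^3(\C^4)^\vee$ restricts over each $\ell$ to $f|_\ell \in \Sym^3 W_\ell^\vee$, the space of cubics on $\ell \cong \P^1$; these fit together into a global section $s_f$ of $\mathcal{E} := \Sym^3\mathcal{S}^\vee$, and $s_f(\ell) = 0$ exactly when $f|_\ell \equiv 0$, i.e.\ when $\ell \subseteq V$. Since $\rank\mathcal{E} = 4 = \dim\Gr(2,4)$, one expects the zero scheme $Z(s_f)$ --- the Fano scheme of lines on $V$ --- to be zero-dimensional, in which case the weighted number of lines is $\int_{\Gr(2,4)} c_4(\mathcal{E})$.

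First I would prove that $s_f$ vanishes transversally whenever $V$ is smooth. The crux is the deformation-theoretic identification $T_{[\ell]}Z(s_f) \cong H^0(\ell, N_{\ell/V})$, compatible with the normal bundle sequence $0 \to N_{\ell/V} \to N_{\ell/\P^3} \to N_{V/\P^3}|_\ell \to 0$; making this precise is, to my mind, the main obstacle, as the rest of the argument is formal. Granting it: since $V$ is smooth along $\ell$, $N_{\ell/V}$ is a line bundle on $\P^1$, and adjunction gives $\det N_{\ell/V} = K_{\P^1}\otimes(K_V|_\ell)^{-1}$; because $V$ is a cubic hypersurface in $\P^3$, adjunction also gives $K_V = \O_V(-1)$, so $\det N_{\ell/V} = \O_{\P^1}(-2)\otimes\O_{\P^1}(1) = \O_{\P^1}(-1)$, which has no global sections. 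Hence each line on $V$ is a reduced isolated point of $Z(s_f)$: there are finitely many lines, $s_f$ is transverse, and the number of lines equals $\int_{\Gr(2,4)} c_4(\mathcal{E})$ on the nose.

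It remains to compute this integer. Writing $a,b$ for the Chern roots of $\mathcal{S}^\vee$, so that $a+b = \sigma_1$ and $ab = \sigma_{1,1}$ in the Schubert basis of $H^\ast(\Gr(2,4))$, the Chern roots of $\Sym^3\mathcal{S}^\vee$ are $3a,\ 2a+b,\ a+2b,\ 3b$, hence
\[
c_4(\mathcal{E}) \;=\; 9ab(2a+b)(a+2b) \;=\; 9\,\sigma_{1,1}\bigl(2\sigma_1^2+\sigma_{1,1}\bigr) \;=\; 9\bigl(2\sigma_1^2\sigma_{1,1} + \sigma_{1,1}^2\bigr).
\]
Using the relations $\sigma_1^2 = \sigma_2 + \sigma_{1,1}$, $\ \sigma_2\sigma_{1,1} = 0$, $\ \sigma_{1,1}^2 = \sigma_{2,2}$, together with $\int_{\Gr(2,4)}\sigma_{2,2} = 1$, one finds $\sigma_1^2\sigma_{1,1} = \sigma_{2,2}$, and therefore $c_4(\mathcal{E}) = 9(2\sigma_{2,2} + \sigma_{2,2}) = 27\,\sigma_{2,2}$, so $\int_{\Gr(2,4)} c_4(\mathcal{E}) = 27$. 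As a sanity check one can verify the count directly on the Fermat cubic $x_0^3 + x_1^3 + x_2^3 + x_3^3 = 0$: its lines are those of the form $\{x_i = \omega x_j,\ x_k = \omega' x_l\}$, and the three ways to partition $\{0,1,2,3\}$ into two pairs $\{i,j\}\sqcup\{k,l\}$ together with the three cube roots of $-1$ for each of $\omega,\omega'$ produce $3\cdot 3\cdot 3 = 27$ lines, each of which one checks to be a reduced point of $Z(s_f)$ --- recovering the count and confirming there are no others.
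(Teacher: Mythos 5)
Your proposal is correct and follows essentially the same route as the paper: realise the lines on $V$ as the zeros of the section $\sigma_f$ of $\Sym^3\mathcal{S}^\ast$ on $\Gr_\C(2,4)$ and identify the count with $c_4(\Sym^3\mathcal{S}^\ast)=27$. The only difference is one of completeness rather than strategy: where the paper cites \cite{3264} for the isolatedness of the zeros and asserts that smoothness of $V$ forces order-one vanishing, you actually supply the transversality argument (via $T_{[\ell]}Z(s_f)\cong H^0(\ell,N_{\ell/V})$ and $N_{\ell/V}\cong\O_{\P^1}(-1)$) and carry out the Schubert-calculus evaluation of $c_4$, both of which check out.
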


\begin{proof} Consider the Grassmannian $\Gr_\C(2,4)$, which parametrizes 2-dimensional complex subspaces $W\subseteq \C^{\oplus 4}$, or equivalently, lines in $\P_\C^3$. As the Grassmannian is a moduli space, it admits a \textit{tautological bundle} $\mathcal{S}$ whose fiber over any point $W\in \Gr_\C(2,4)$ is the vector space $W$ itself. A chosen homogeneous polynomial $f$ of degree three defines a section $\sigma_f$ of $\Sym^3 \mathcal{S}^\ast$, where
\begin{align*}
	\sigma_f \left( [W] \right) = \left. f \right|_W.
\end{align*}
Thus we see that the line $\ell \subseteq \P_\C^3$ corresponding to $[W]$ lies on the surface $V$ if and only if $\sigma_f [W] = 0$. One may see that $\sigma_f$ has isolated zeros \cite[Corollary~6.17]{3264}, and thus we may express the Euler class of the bundle as

\begin{equation}\label{eqn:euler-class}
\begin{aligned}
    e(\Sym^3 \mathcal{S}^\ast) = c_4 (\Sym^3 \mathcal{S}^\ast) = \sum_{\ell} \deg^\top_\ell \sigma_f,
\end{aligned}
\end{equation}

where this last sum is over the zeros of $\sigma_f$. We determine $\deg^\top_\ell \sigma_f$ by choosing local coordinates near $\ell$ on $\Gr_\C(2,4)$ as well as a compatible trivialization for $\Sym^3 \mathcal{S}^\ast$ over this coordinate patch. Then $\sigma_f$ may be viewed as a function
\begin{align*}
	\A_\C^4 \supseteq U \xto{\sigma_f} \A_\C^4
\end{align*}
with an isolated zero at $\ell$. We can then define $\deg^\top_\ell \sigma_f$ as the local degree of this function. It is a fact that the smoothness of $V$ implies that $\sigma_f$ vanishes to order 1 at $\ell$. Thus the Euler class counts the number of lines on $V$. Finally, one may compute $c_4(\Sym^3 \mathcal{S}^\ast) = 27$ by applying the splitting principle and computing the cohomology of $\Gr_\C(2,4)$.
\end{proof}

In the real case, Sch\"{a}fli \cite{Schafli} and Segre \cite{Segre} showed that there can be 3, 7, 15, or 27 real lines on $V$. One of the main differences between the real and the complex case was the distinction that Segre drew between \textit{hyperbolic} and \textit{elliptic} lines.

\begin{definition} We say that $I\in \PGL_2(\R)$ is \textit{hyperbolic} (resp. \textit{elliptic}) if the set
\begin{align*}
	\Fix(I) = \{x \in \P^1_\R \ : \ Ix = x\}
\end{align*}
consists of two real points (resp. a complex conjugate pair of points).
\end{definition}

To a real line $\ell \subseteq V$ we may associate an involution $I\in \Aut(\ell) \cong \PSL_2(\R)$, where $I$ sends $p\in \ell$ to $q\in \ell$ if $T_p V \cap V = \ell \cup Q$, for some $Q$ satisfying $\ell \cap Q = \{p,q\}$, (that is, for any point $p$ on a line $\ell$, there is exactly one other point $q$ having the same tangent space). We can say that $\ell$ is hyperbolic (resp. elliptic) whenever $I$ is.

Alternatively, we may describe these classes of lines topologically. We think of the frame bundle as a principal $\SO(3)$-bundle over $\RP^3$. As $\SO(3)$ admits a double cover $\Spin(3)$, from any principal $\SO(3)$-bundle we may obtain a principal $\Spin(3)$-bundle. Traveling on our cubic surface along the line $\ell$ gives a distinguished choice of frame at every point on $\ell$, that is, a loop in the frame bundle. This loop may or may not lift to the associated $\Spin(3)$-bundle. If the loop lifts, then $\ell$ is hyperbolic, and if it doesn't then $\ell$ is elliptic.

\begin{theorem}\label{thm:hyperbolic-minus-elliptic} In the real case, we have the following relationship between hyperbolic and elliptic lines:
\begin{align*}
	\#\{\text{real hyperbolic lines on } V \} - \#\{\text{real elliptic lines on } V \} = 3.
\end{align*}
We refer the reader to the following sources \cite{Segre,Benedetti-Silhol,Horev-Solomon,Okonek-Teleman,Finashin-Kharlamov}.
\end{theorem}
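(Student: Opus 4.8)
The plan is to reinterpret the classical proof of Theorem~\ref{thm:cayley-salmon} by replacing the topological local degree $\deg^\top_\ell\sigma_f$ appearing in Equation~\ref{eqn:euler-class} with the local $\A^1$-degree $\deg^{\A^1}_\ell\sigma_f \in \GW(\R)$, so that the count of lines becomes a single element of $\GW(\R)$ whose rank recovers the Cayley--Salmon number $27$ and whose signature is the quantity we wish to compute. Concretely, work over $k=\R$ with the rank-$4$ bundle $\Sym^3\mathcal{S}^\ast$ on the $4$-dimensional variety $\Gr_\R(2,4)$ and its section $\sigma_f$, which has only nondegenerate zeros since $V$ is smooth. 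Choosing, near a zero $\ell$, local coordinates on $\Gr_\R(2,4)$ and a local trivialization of $\Sym^3\mathcal{S}^\ast$, the section $\sigma_f$ becomes a map of affine spaces with an isolated zero, and we define $\deg^{\A^1}_\ell\sigma_f$ via purity exactly as in Definition~\ref{def:local-a1-deg}.

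First I would establish that the sum $\sum_\ell \deg^{\A^1}_\ell\sigma_f$ is a well-defined element of $\GW(\R)$ independent of the smooth cubic $f$. The subtlety is that each local degree is only defined up to a square class unless the coordinate and trivialization choices are coherent; the remedy, carried out in \cite{KW-arithmetic-count}, is to check that $\Sym^3\mathcal{S}^\ast$ is \emph{relatively orientable} over $\Gr_\R(2,4)$, i.e.\ that $\Hom\!\left(\det T\Gr_\R(2,4),\, \det \Sym^3\mathcal{S}^\ast\right)$ is a square in the Picard group, which pins down the relevant sign data and produces an $\A^1$-Euler number $n\!\left(\Sym^3\mathcal{S}^\ast\right)\in\GW(\R)$ with rank the classical Euler number $c_4(\Sym^3\mathcal{S}^\ast)=27$. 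The next, and most computational, step is to evaluate $n\!\left(\Sym^3\mathcal{S}^\ast\right)$ itself: since it does not depend on $f$, one may compute it on a convenient cubic (for instance the Fermat cubic, whose $27$ lines are defined over a small extension, so each contribution is a traced rank-one form) or by an enriched Schubert-calculus argument, obtaining $n\!\left(\Sym^3\mathcal{S}^\ast\right) = 15\gw{1} + 12\gw{-1}$ in $\GW(\R)$.

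Next I would identify the local contributions with Segre's dichotomy. For a real line $\ell\subseteq V$ the local degree is $\deg^{\A^1}_\ell\sigma_f = \gw{\pm 1}$, and one shows $\gw{1}$ occurs precisely for hyperbolic lines and $\gw{-1}$ for elliptic lines --- e.g.\ by computing $\deg^{\A^1}_\ell\sigma_f$ as the EKL class of $\sigma_f$ at $\ell$ (Theorem~\ref{kw-deg-ekl}), hence as $\gw{\Jac(\sigma_f)(\ell)}$, and matching the sign of this Jacobian against the involution $I\in\Aut(\ell)\cong\PSL_2(\R)$ being hyperbolic or elliptic, equivalently against whether the distinguished loop in the frame bundle of $V$ lifts to the $\Spin(3)$-cover. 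For a line whose residue field is $\C$, the contribution is $\Tr_{\C/\R}\gw{u} = \H$, which has signature $0$. Taking signatures of both sides of $\sum_\ell \deg^{\A^1}_\ell\sigma_f = 15\gw{1} + 12\gw{-1}$ then gives $\#\{\text{real hyperbolic lines on }V\} - \#\{\text{real elliptic lines on }V\} = 15-12 = 3$, while taking ranks recovers Theorem~\ref{thm:cayley-salmon} and Segre's constraint that the number of real lines is odd and at most $27$.

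I expect the main obstacle to be a combination of (i) verifying relative orientability and deducing that $\sum_\ell \deg^{\A^1}_\ell\sigma_f$ is independent of the smooth cubic, and (ii) the explicit determination $n\!\left(\Sym^3\mathcal{S}^\ast\right) = 15\gw{1} + 12\gw{-1}$ --- this is the technical heart of \cite{KW-arithmetic-count} and requires either a delicate model computation combined with the independence statement or an enriched splitting principle. Matching the local sign with Segre's geometric hyperbolic/elliptic criterion (iii) is also genuine work, but it is more classical in nature, resting on the EKL description of the local degree and the elementary projective geometry of the residual conic $T_pV\cap V = \ell\cup Q$.
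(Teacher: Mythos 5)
Your argument is correct, but it is not the route the paper takes for this theorem. You deduce the real statement as the signature specialization of the $\GW(k)$-valued count of Theorem~\ref{thm:Thm2-KW-arithm}: relative orientability of $\Sym^3\mathcal{S}^\ast$ over $\Gr_k(2,4)$, independence of the enriched Euler class from the smooth cubic, the evaluation $15\gw{1}+12\gw{-1}$ on a convenient cubic, the identification of the local index at a real zero with $\gw{\pm 1}$ according to Segre's hyperbolic/elliptic dichotomy, and the observation that a conjugate pair of complex lines contributes $\Tr_{\C/\R}\gw{u}=\H$, which has signature $0$. This is precisely the deduction the paper records in the remark following Theorem~\ref{thm:Thm2-KW-arithm}, and it is sound. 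The paper's own proof sketch of Theorem~\ref{thm:hyperbolic-minus-elliptic}, by contrast, stays entirely in classical topology over $\R$: it writes the ordinary Euler number $e(\Sym^3\mathcal{S}^\ast)$ on $\Gr_\R(2,4)$ as the sum of local Brouwer degrees $\deg^\top_\ell\sigma_f$ as in Equation~\ref{eqn:euler-class}, shows $\deg^\top_\ell\sigma_f=+1$ at hyperbolic and $-1$ at elliptic lines, and computes $e(\Sym^3\mathcal{S}^\ast)=3$ by passing to the Grassmannian of oriented planes. The trade-off: the classical route is self-contained, needs no $\GW$-enrichment, no relative orientability discussion, and no Fermat-cubic computation, which matters here since this theorem is presented as pre-existing motivation for the enriched count (historically it is an input due to Segre, Okonek--Teleman, Finashin--Kharlamov, et al.); your route leans on the full technical content of \cite{KW-arithmetic-count}, but in exchange it proves the real and complex statements simultaneously (signature and rank of one class in $\GW(\R)$) and generalizes verbatim to arbitrary fields. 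Logically there is no circularity in your argument, since the enriched theorem is proved independently, but you should be aware that you are invoking a strictly stronger result to obtain this one.
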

\begin{proof}[Proof sketch] Via the map $\sigma_f : \Gr_\R(2,4) \to \Sym^3 \mathcal{S}^\ast$, we have that
\begin{align*}
	e(\Sym^3 \mathcal{S}^\ast) = \sum_{ \substack{ \ell \in \Gr_\R (2,4) \\ \sigma_f (\ell) = 0 }} \deg^\top_\ell \sigma_f.
\end{align*}
One may also show that
\begin{align*}
	\deg^\top_\ell \sigma_f = \begin{cases} 1 & \text{if }\ell \text{ is hyperbolic} \\ -1 & \text{if }\ell \text{ is elliptic}, \\ \end{cases}
\end{align*}
and compute that $e(\Sym^3 \mathcal{S}^\ast) = 3$ using the Grassmannian of oriented planes.
\end{proof}

To define a notion of hyperbolic and elliptic which holds in more generality, we introduce the \textit{type} of a line. As before, we let $V\subseteq \P_k^3$ be a smooth cubic surface, and consider a closed point $\ell\in \Gr_k(2,4)$, with residue field $L = k(\ell)$. We can then view $\ell$ as a closed immersion
\begin{align*}
	\ell\cong \P_L^1 \clhook \P_k^3 \otimes_k L.
\end{align*}

Given such a line $\ell \subseteq V$, we again have an associated involution:
\begin{align*}
    I = \begin{pmatrix} a & b \\ c & d \end{pmatrix} \in \PGL_2(L).
\end{align*}

Since $I$ is an involution, its fixed points satisfy $\frac{az+d}{cz+d} = z$, from which we can see they are defined over the field $L \left( \sqrt{D} \right)$, where $D$ is the discriminant of the subscheme $\Fix(I) \subseteq \P^1_L$.

\begin{definition}\label{defn:type-line} The \textit{type} of a line $\ell$ is the element of $\GW(k(\ell))$ given by
\begin{align*}
	\type(\ell) := \gw{D} = \gw{ad-bc} = \gw{-1} \deg^{\A^1}(I).
\end{align*}
We say a line is \textit{hyperbolic} if $\type(\ell) = \left\langle 1 \right\rangle$, and \textit{elliptic} otherwise.
\end{definition}

\begin{theorem}\label{thm:Thm2-KW-arithm}\cite[Theorem~2]{KW-arithmetic-count} The number of lines on a smooth cubic surface is computed via the following weighted count
\begin{align*}
	\sum_{\ell \subseteq V} \Tr_{k(\ell)/k} (\type(\ell)) = 15\cdot \gw{1} + 12\cdot \gw{-1}.
\end{align*}
\end{theorem}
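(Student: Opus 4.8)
The plan is to realise the weighted count as the $\A^1$-Euler number of the rank-$4$ bundle $\Sym^3\mathcal{S}^\ast$ over the $4$-dimensional Grassmannian $\Gr_k(2,4)$, running the proof of the Cayley--Salmon theorem (Theorem~\ref{thm:cayley-salmon}) with the local topological degree of $\sigma_f$ replaced throughout by a local $\A^1$-degree valued in $\GW(k)$. Assuming $\char k\neq 2$, the first step is to check that $\Sym^3\mathcal{S}^\ast$ is \emph{relatively orientable} over $\Gr_k(2,4)$: since $\mathcal{S}$ has rank $2$ one computes $\det\Sym^3\mathcal{S}^\ast=(\det\mathcal{S}^\ast)^{\otimes 6}$ while $\det T\Gr_k(2,4)=(\det\mathcal{S}^\ast)^{\otimes 4}$, so $\Hom\!\big(\det T\Gr_k(2,4),\det\Sym^3\mathcal{S}^\ast\big)\cong(\det\mathcal{S}^\ast)^{\otimes 2}$ is a square in the Picard group. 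Granting relative orientability, the $\A^1$-enriched Euler-number formalism gives a well-defined class $e^{\A^1}(\Sym^3\mathcal{S}^\ast)\in\GW(k)$ such that $\sum_{\ell}\ind_\ell(\sigma_f)=e^{\A^1}(\Sym^3\mathcal{S}^\ast)$ for any homogeneous cubic $f$ whose associated section $\sigma_f$ has only isolated zeros; in particular the left-hand side of the theorem is independent of $f$ among smooth cubic surfaces, where $\ind_\ell$ denotes the local index of $\sigma_f$ computed in local coordinates on $\Gr_k(2,4)$ together with a local trivialisation of $\Sym^3\mathcal{S}^\ast$ compatible with the chosen relative orientation.

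The heart of the argument is then the local identification $\ind_\ell(\sigma_f)=\Tr_{k(\ell)/k}\big(\type(\ell)\big)$ for each line $\ell\subseteq V$. I would first treat the case $k(\ell)=k$: choose coordinates on $\P^3_k$ so that $\ell=\{x_2=x_3=0\}$, use the standard affine chart of $\Gr_k(2,4)$ around $\ell$ (whose coordinates record the $2\times 2$ block parametrising nearby $2$-planes) together with the trivialisation of $\Sym^3\mathcal{S}^\ast$ coming from the monomial basis. In these coordinates $\sigma_f$ becomes a morphism $U\subseteq\A^4_k\to\A^4_k$ with an isolated zero at $\ell$, and smoothness of $V$ forces this zero to be simple, i.e.\ $\sigma_f$ is \'etale at $\ell$; Proposition~\ref{prop:local-deg-tr-Jac} then gives $\ind_\ell(\sigma_f)=\gw{\Jac(\sigma_f)(\ell)}$. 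The remaining task is linear algebra: writing $f$ near $\ell$ as $x_2g_2+x_3g_3$ with $g_2,g_3$ restricting to binary cubics on $\ell\cong\P^1$, one shows the $4\times4$ Jacobian of $\sigma_f$ at $\ell$ equals, up to a square in $k^\times$, the discriminant $D=ad-bc$ of the fixed-point scheme of the residual involution $I\in\PGL_2(k)$ attached to $\ell$ — the pairing of a point $p\in\ell$ with the second point $q$ of $\ell$ having the same tangent plane is exactly the geometry that the B\'ezout/resultant data of $g_2|_\ell,g_3|_\ell$ records. This yields $\ind_\ell(\sigma_f)=\gw{D}=\type(\ell)$. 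For a closed point $\ell$ with residue field $L=k(\ell)$ — which is a separable extension of $k$ by smoothness of $V$ — the identical computation carried out over $L$ followed by the field trace $\Tr_{L/k}$ gives $\ind_\ell(\sigma_f)=\Tr_{L/k}\type(\ell)$, using compatibility of the local $\A^1$-degree with base change and transfer.

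Finally I would evaluate $e^{\A^1}(\Sym^3\mathcal{S}^\ast)$. Since $\Gr(2,4)$ and $\Sym^3\mathcal{S}^\ast$ are defined over $\Z$, this class is pulled back along $\GW(\text{prime field})\to\GW(k)$, so it suffices to compute it once over $\Q$ (or over $\F_p$, $p\neq 2$): pick a single explicit smooth cubic surface whose $27$ lines and their types can be written down by hand, sum the contributions $\Tr_{k(\ell)/k}\type(\ell)$ from the second paragraph, and verify the total is $15\gw{1}+12\gw{-1}$. Two independent consistency checks are available: taking rank recovers the $27$ lines over $\bar k$ (Theorem~\ref{thm:cayley-salmon}), and taking signature over $\R$ recovers $\#\{\text{hyperbolic}\}-\#\{\text{elliptic}\}=3$ (Theorem~\ref{thm:hyperbolic-minus-elliptic}), consistent with $15-12=3$. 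The main obstacle is the computation in the middle paragraph: one must arrange mutually compatible local coordinates on $\Gr_k(2,4)$ and a local trivialisation of $\Sym^3\mathcal{S}^\ast$ that genuinely respects the relative orientation (an overall square-class ambiguity would otherwise render the sign of $D$ meaningless), and then push the $4\times4$ Jacobian determinant of $\sigma_f$ through to the discriminant $ad-bc$ of $I$; this is exactly where the enumerative geometry of lines on a cubic surface — the residual conic and its two marked points — is used.
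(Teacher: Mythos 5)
Your proposal follows essentially the same route as the paper's sketch (and the underlying argument of Kass--Wickelgren): establish relative orientability of $\Sym^3\mathcal{S}^\ast$ over $\Gr_k(2,4)$, express the weighted count as the $\GW(k)$-valued Euler class $\sum_\ell \ind_\ell \sigma_f$, identify $\ind_\ell\sigma_f$ with $\Tr_{k(\ell)/k}(\type(\ell))$ via a local Jacobian computation, and evaluate the Euler number on one convenient surface (the paper suggests the Fermat cubic) using its independence of the chosen section. The outline is correct, with the genuine work concentrated exactly where you say it is: the well-definedness of the Euler number and the Jacobian-equals-discriminant computation of \cite[\S5]{KW-arithmetic-count}.
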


\begin{remark} We may apply the previous theorem to observe the following results:
\begin{enumerate}
\item If $k=\C$, then by taking the rank, we obtain the Cayley-Salmon Theorem (\ref{thm:cayley-salmon}), stating that the number of lines on a cubic surface is 27.

\item If $k=\R$, then $\Tr_{\C/\R} \gw{1} = \gw{1} \oplus \gw{-1}$. Taking the signature, we recover Theorem \ref{thm:hyperbolic-minus-elliptic}, stating that the number of hyperbolic lines minus the number of elliptic lines is 3.
\end{enumerate}
\end{remark}

As a particular application, if we are working over a finite field $k=\F_q$, then its square classes are $\F_q^\times \Big/ (\F_q^\times)^2 \cong \{1,u\}$. Thus the type of a line $\ell$ over $\F_{q^a}$ is either $\gw{1}$ or $\gw{u_a}$, which by Definition \ref{defn:type-line} we call hyperbolic or elliptic, respectively.

\begin{corollary}\label{cor-ellip-hyperbolic-lines} \cite[Theorem~1]{KW-arithmetic-count} For any natural number $a$, we have that the number of lines on $V$ satisfies
\begin{align*}
	&\#\{\text{elliptic lines with field of definition } \F_{q^{ 2a + 1}} \} \\
    &+ \#\{\text{hyperbolic lines with field of definition } \F_{q^{ 2a }}\} \equiv 0 \pmod{2}.
\end{align*}
In particular when all the lines in question are defined over a common field $k$, we have that the number of elliptic lines is even.
\end{corollary}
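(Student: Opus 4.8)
The plan is to deduce the corollary directly from Theorem~\ref{thm:Thm2-KW-arithm} by base-changing to the finite field $\F_q$ and tracking ranks. First I would specialize Theorem~\ref{thm:Thm2-KW-arithm} to $k = \F_q$, so that
\[
\sum_{\ell \subseteq V} \Tr_{\F_q(\ell)/\F_q}(\type(\ell)) = 15\cdot\gw{1} + 12\cdot\gw{-1} \in \GW(\F_q).
\]
Grouping the lines by their field of definition $\F_{q^a}$, each closed point $\ell$ with $\F_q(\ell) = \F_{q^a}$ contributes $\Tr_{\F_{q^a}/\F_q}\gw{1}$ if $\ell$ is hyperbolic and $\Tr_{\F_{q^a}/\F_q}\gw{u_a}$ if $\ell$ is elliptic, where $u_a$ is a non-square in $\F_{q^a}^\times$. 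The key computational input is the behavior of the trace form $\Tr_{\F_{q^a}/\F_q}$ on rank-one forms over $\F_{q^a}$: I would show that $\Tr_{\F_{q^a}/\F_q}\gw{1}$ and $\Tr_{\F_{q^a}/\F_q}\gw{u_a}$ each have rank $a$, and determine their discriminants. The standard fact (provable via the normal basis theorem, or by diagonalizing the trace form of a separable extension) is that $\Tr_{\F_{q^a}/\F_q}\gw{1}$ has discriminant represented by the norm/discriminant of the extension; when $a$ is even the extension $\F_{q^a}/\F_q$ contains $\F_{q^2}$ and the trace form becomes isotropic in a controlled way, whereas for $a$ odd the trace form of $\gw{1}$ stays ``close to'' $a\cdot\gw{1}$. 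The precise statement I want is: over $\F_q$, the rank-$a$ trace form $\Tr_{\F_{q^a}/\F_q}\gw{c}$ is determined by its rank and discriminant, the discriminant being $\gw{1}$ or $\gw{u}$ according to an explicit parity condition in $a$ and in whether $c$ is a square.

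Next I would pass to the quotient $\GW(\F_q) \to \Z/2$ that records the rank mod $2$ together with the discriminant class — more efficiently, I would apply a suitable $\GW(\F_q) \to \Z/2$ homomorphism that isolates exactly the parity asserted in the corollary. Concretely: $\gw{1}$ and $\gw{-1}$ have the same rank, so $15\gw{1} + 12\gw{-1}$ has rank $27$; reducing the rank equation mod $2$ gives that the total number of lines (with multiplicity $a = [\F_q(\ell):\F_q]$, weighted) is odd. That alone is not the statement; I need the finer invariant. The right move is to use the discriminant homomorphism $\disc\colon \GW(\F_q) \to \F_q^\times/(\F_q^\times)^2 \cong \Z/2$ composed appropriately, or better, to combine rank mod $2$ with the total discriminant. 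Applying $\disc$ to $15\gw{1}+12\gw{-1}$ gives $\gw{-1}^{12} = \gw{1}$, i.e. the total discriminant is trivial. On the left side, $\disc\big(\Tr_{\F_{q^a}/\F_q}\type(\ell)\big)$ depends on $a$ and on hyperbolic-vs-elliptic via the parity computation of the previous paragraph; collecting these contributions across all lines and equating to the trivial class yields a mod-$2$ linear relation among the counts $\#\{\text{hyperbolic lines over }\F_{q^a}\}$ and $\#\{\text{elliptic lines over }\F_{q^a}\}$ for each $a$. Sorting the $a$ by parity (splitting $a = 2b$ versus $a = 2b+1$) and using that only the even-$a$ hyperbolic terms and odd-$a$ elliptic terms survive with a nontrivial discriminant contribution produces exactly
\[
\#\{\text{elliptic lines over }\F_{q^{2a+1}}\} + \#\{\text{hyperbolic lines over }\F_{q^{2a}}\} \equiv 0 \pmod 2.
\]
The final sentence of the corollary is then immediate: if every line is defined over a common field $k = \F_q$ (so $a = 1$ for all of them, contributing to the ``elliptic over $\F_{q^{2\cdot 0 + 1}} = \F_q$'' term), the relation forces the number of elliptic lines to be even.

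The main obstacle is the bookkeeping in the second paragraph: correctly computing $\disc\big(\Tr_{\F_{q^a}/\F_q}\gw{c}\big)$ for all $a$ and all square classes $c \in \F_{q^a}^\times/(\F_{q^a}^\times)^2$, and verifying that the contributions organize cleanly by the parity of $a$ so that precisely the two families named in the corollary appear. This requires the classical formula for the discriminant of the trace form of $\F_{q^a}/\F_q$ (which distinguishes $a$ even from $a$ odd, and interacts with whether $c$ is a square in $\F_{q^a}$), together with the observation that $\F_{q^a}^\times/(\F_{q^a}^\times)^2 \to \F_q^\times/(\F_q^\times)^2$ under the norm/transfer behaves differently for even and odd $a$. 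Everything else — the specialization of Theorem~\ref{thm:Thm2-KW-arithm}, the structure of $\GW(\F_q) \cong \Z \times \F_q^\times/(\F_q^\times)^2$ from the proposition on $\GW(\F_q)$, and extracting rank and discriminant — is routine.
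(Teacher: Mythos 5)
Your proposal is correct and is essentially the derivation intended here (and carried out for \cite[Theorem~1]{KW-arithmetic-count}): specialize Theorem~\ref{thm:Thm2-KW-arithm} to $k=\F_q$, where $\GW(\F_q)$ is detected by rank and discriminant, and compare discriminants, using that $15\gw{1}+12\gw{-1}$ has square discriminant while $\Tr_{\F_{q^a}/\F_q}\gw{c}$ has non-square discriminant exactly for elliptic lines with $a$ odd and hyperbolic lines with $a$ even. One small correction to your bookkeeping: the parity dependence enters only through the discriminant of $\Tr_{\F_{q^a}/\F_q}\gw{1}$ (a square if and only if $a$ is odd, e.g.\ by Stickelberger's theorem), whereas the norm map on square classes $\F_{q^a}^\times/(\F_{q^a}^\times)^2 \to \F_q^\times/(\F_q^\times)^2$ is an isomorphism for every $a$, so the elliptic-versus-hyperbolic distinction contributes the same sign in all degrees.
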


In order to prove Theorem~\ref{thm:Thm2-KW-arithm}, one considers $\sigma_f$ to be a section of the bundle $\Sym^3 \mathcal{S}^\ast \to \Gr_k(2,4)$, and computes a sum over its isolated zeros, weighted by their local index. Over the complex numbers, this is precisely Equation \ref{eqn:euler-class}, which recovers the Euler number of the bundle. In a more general context, however, we will want to obtain an element of $\GW(k)$. This requires us to use an enriched notion of an \textit{Euler class}, described below.

\begin{digression} In this exposition, given a vector bundle $E\to X$ with section $\sigma$, we use the Euler class $e(E,\sigma)$ valued in $\GW(k)$ of \cite[Section 4]{KW-arithmetic-count}. In the literature, there are a number of other Euler classes which coincide with this definition in various settings. One may define this Euler class via Chow-Witt groups \cite{Barge-Morel} or oriented Chow groups \cite{Fasel08} as in the work of M. Levine \cite{Levine-enumerative-geometry}. In his seminal book, Morel defines the Euler class of a bundle $E \to X$ as a cohomology class in twisted Milnor-Witt $K$-theory $H^n(X; \mathcal{K}^{\text{MW}}_n(\det E^\ast))$ \cite{morel}, and when $\det(E^\ast)$ is trivial, one may relate these Euler classes up to a unit multiple via the isomorphism
\begin{align*}
    H^n(X; \mathcal{K}^{\text{MW}}_n(\det E^\ast)) \cong \widetilde{\text{CH}}(X, \det E^\ast).
\end{align*}
For more details, see the work of Asok and Fasel \cite{Asok-Fasel}. Other versions of the Euler class in $\A^1$-homotopy theory occur in the work of D\'eglise, Jin and Khan \cite{DJK} and the work of Levine and Raksit \cite{Levine-Raksit}. Many of these notions are equated in work of Bachmann and Wickelgren \cite{BW3}.
\end{digression}

\begin{definition} Let $X$ be a smooth projective scheme of dimension $r$, and let $\mathcal{E} \to X$ be a rank $r$ bundle. We say that $\mathcal{E}$ is \textit{relatively oriented} if we are given an isomorphism
\begin{align*}
	\Hom( \det TX, \det \mathcal{E}) \cong \mathcal{L}^{\otimes 2},
\end{align*}
where $\mathcal{L}$ is a line bundle on $X$.
\end{definition}

Suppose that $\sigma$ is a section of a relatively oriented bundle $\mathcal{E}$ with isolated zeros, and define $Z= \{\sigma = 0\}$ to be its vanishing locus. For each $x\in Z$, we will define $\deg^{\A^1}_x \sigma$ as follows:
\begin{enumerate}
    \item Choose \textit{Nisnevich coordinates} (\cite[Definition~17]{KW-arithmetic-count}) near $x\in Z$, that is, pick an open neighborhood $U\subseteq X$ around $x$, and an \'etale morphism $\phi : U \to \A_k^r$ such that $k(\phi(x)) \cong k(x)$.
    \item Choose a \textit{compatible oriented} trivialization $\mathcal{E}\big|_U$, that is, a local trivialization
    \begin{align*}
    	\psi : \mathcal{E}\big|_U \to \O_U^{\oplus r},
    \end{align*}
    such that the associated section $\Hom( \det TX, \det \mathcal{E})(U)$ is a square of a section in $\mathcal{L}(U)$. Then we have that $\psi\circ \sigma \in \O_U^{\oplus r}$ and there exists a $g\in (\mathfrak{m}_x^N)^{\oplus r}$, with $N$ sufficiently large, so that
    \begin{align*}
    	\psi\circ \sigma + g \in \phi^\ast \O_{\A_k^r}.
    \end{align*}
    Define $f:= \psi\circ\sigma + g$, and then we have that $f: \phi(U) \to \A_k^r$ has an isolated zero at $\phi(x)$. Since our trivialization was compatibly oriented, this definition is independent of the choice of $g$.
    
    \item Finally, we define $\deg^{\A^1}_x \sigma := \deg^{\A^1}_{\phi(x)} f \in \GW(k)$.
\end{enumerate}

\begin{definition} For a relatively oriented bundle $\mathcal{E} \to X$, and a section $\sigma$ with isolated zeros, we define the \textit{Euler class} to be
\begin{align*}
    e(E,\sigma) := \sum_{x:\sigma(x) = 0} \deg^{\A^1}_x \sigma.
\end{align*}
\end{definition}

In order to conclude the proof of Theorem~\ref{thm:Thm2-KW-arithm}, we must identify $\deg^{\A^1}_\ell \sigma_f$ with $\type(\ell)$. Then we are able to compute $e(\Sym^3 \mathcal{S}^\ast)$ using a well-behaved choice of cubic surface, for instance the Fermat cubic. For more details, see \cite[\S5]{KW-arithmetic-count}.

\begin{remark} Following our definition of an Euler class for a relatively oriented bundle, we include the following closely related remarks.
\begin{enumerate}
    \item Interesting enumerative information is still available when relative orientability fails. For an example of this in the literature, we refer the reader to the paper of Larson and Vogt \cite{Larson-Vogt} which defines relatively oriented bundles relative to a divisor in order to compute an enriched count of bitangents to a smooth plane quartic \cite{Larson-Vogt}.
    \item Given a smooth projective scheme over a field, one may push forward the Euler class of its tangent bundle to obtain an Euler characteristic which is valued in $\GW(k)$. A particularly interesting consequence of this is an enriched version of the Riemann--Hurwitz formula, first established by M. Levine \cite[Theorem~12.7]{Levine-enumerative-geometry} and expanded upon by work of Bethea, Kass, and Wickelgren \cite{Bethea-Kass-Wickelgren}.

\end{enumerate}
\end{remark}

Forthcoming work of Pauli investigates the related question of lines on quintic threefold \cite{Pauli-Quintic_3_fold}. We also refer the reader to work of M. Levine, which includes an examination of Witt-valued characteristic classes, including an Euler class of $\Sym^{2n-d} \mathcal{S}^\ast$ on $\Gr_k(2,n+1)$ \cite{MLevine-Witt-char-classes}, and results of Bachmann and Wickelgren for symmetric bundles on arbitrary Grassmannians \cite[Corollary~6.2]{BW3}. Finally, for a further investigation of enriched intersection multiplicity, we refer the reader to recent work of McKean on enriching B\'ezout's Theorem \cite{McKean}.

\subsection{An arithmetic count of the lines meeting 4 lines in space}\label{sec:four-lines}
The following is based off of work by Padmavathi Srinivasan and Kirsten Wickelgren \cite{Padma-Kirsten}.

In enumerative geometry, one encounters the following classical question: given four complex lines in general position in $\CP^3$, how many other complex lines meet all four? The answer is two lines, whose proof we sketch out below.

\begin{customenvironment}{Four lines in three-space, classically} Let $L_1, L_2, L_3, L_4$ be lines in $\CP^3$ so that no three of them intersect at one point (we refer to this condition as \textit{general}). Given a point $p\in L_1$, there is a unique line $L_p$ through $p$ which intersects both $L_2$ and $L_3$. We then examine the surface sweeped out by all such lines $Q := \bigcup_{p\in L_1} L_p$, and we claim that this is a degree two hypersurface which contains $L_1$, $L_2$, and $L_3$. To see this, it suffices to verify that it is the vanishing locus of a degree two homogeneous polynomial. A homogeneous polynomial of degree two, considered as an element of $H^0(\CP^3, \O(2))$, will vanish on the line $L_i$ if and only if it lies in the kernel
\begin{align*}
    H^0(\CP^3, \O(2)) \to H^0(L_i, \O(2)).
\end{align*}
We verify that
\begin{align*}
	\dim_k H^0( \CP^3, \O(2)) = \binom{2+3}{2} = 10 \\
	\dim_k H^0(L_i, \O(2)) = 3,
\end{align*}
therefore for $i=1,2,3$ each such map has kernel of dimension $\ge 7$. This implies there is a polynomial $f$ in the common kernel of all three maps. We claim that $L_p \subseteq V(f)$ for each $p\in L_1$, and indeed since three points of $L_p$ lie in $V(f)$, we see that $V(f)$ contains the entire line. Therefore we have containment $V(f) \supseteq Q$, and it is easy to see we must have equality. Finally by applying B\'ezout's Theorem, we see that $Q\cap L_4$ consists of two points, counted with multiplicity.
\end{customenvironment}

One might ask how to answer this question over an arbitrary field $k$. We recall that the Grassmannian $\Gr_k(2,4)$ parametrizes lines in $\P_k^3$ (that is, two-dimensional subspaces of $k^{\oplus 4}$), which is an appealing moduli space for this problem. We first select a basis $\{e_1,e_2,e_3,e_4\}$ of $k^{\oplus 4}$ satisfying 
\begin{align*}
	L_1 &= k e_3 \oplus k e_4,
\end{align*}
and we define a new line $L$ such that
\begin{align*}
	L &= k \tilde{e}_3 \oplus k \tilde{e}_4,
\end{align*}
where $\tilde{e}_3$ and $\tilde{e}_4$ are some linearly independent vectors whose definition we defer until further below. Letting $\phi_i$ denote the dual basis element to $e_i$, one may compute that $L \cap L_1$ is nonempty if and only if 
\[
	(\phi_1 \wedge \phi_2)(\tilde{e}_3 \wedge \tilde{e}_4) = 0.
\]
Consider the line bundle $\det \mathcal{S}^\ast = \mathcal{S}^\ast \wedge \mathcal{S}^\ast \to \Gr_k(2,4)$, whose fiber over a point $W\in \Gr_k(2,4)$ is $W^\ast \wedge W^\ast$. We then have that $\phi_1 \wedge \phi_2 \in H^0( \Gr_k(2,4), \mathcal{S}^\ast \wedge \mathcal{S}^\ast)$ and
\begin{align*}
	(\phi_1 \wedge \phi_2)([W]) = \left. \phi_1 \right|_W \wedge \left. \phi_2 \right|_W.
\end{align*}
It is then clear that we obtain a bijection between lines intersecting $L_1$ and zeros of $\phi_1 \wedge \phi_2$:
\begin{align*}
	\{ L \ : \ L \cap L_1 \neq \emptyset\} = \{ [W] \ : (\phi_1 \wedge \phi_2)([W]) = 0\}.
\end{align*}
We may repeat this process for each line to form a section $\sigma$ of $\oplus_{i=1}^4 \mathcal{S}^\ast \wedge \mathcal{S}^\ast$. Then the zeros of $\sigma$ will correspond exactly to lines which meet all four of our chosen lines:
\begin{align*}
	\{ L \ : \ L \cap L_i \neq \emptyset,\ i=1,2,3,4 \} = \{ [W] \in \Gr_k(2,4) \ : \ \sigma([W]) = 0\}.
\end{align*}

In particular, if $\sigma$ is a section of a relatively oriented bundle, then we may calculate an enriched count of lines meeting four lines in space, given by the Euler class
\begin{align}\label{eqn:enriched-count-line-bundle}
	e\left( \oplus_{i=1}^4 \mathcal{S}^\ast \wedge \mathcal{S}^\ast , \sigma\right) = \sum_{L \ : \ L\cap L_i \neq 0} \ind_L \sigma.
\end{align}

Denote by $\mathcal{E} = \oplus_{i=1}^4 \mathcal{S}^\ast \wedge \mathcal{S}^\ast$ our rank four vector bundle over $X := \Gr_k(2,4)$. Since $X$ is a smooth projective scheme of dimension four, we have that $(\det TX)^\ast \cong \omega_X \cong \O(-2)^{\otimes 2}$, and $\det\mathcal{E} \cong \left( \otimes_{i=1}^2 \mathcal{S}^\ast \wedge \mathcal{S}^\ast\right)^{\otimes 2}$. Therefore $\Hom(\det TX, \det \mathcal{E}) \cong w_X \otimes \det \mathcal{E} \cong \mathcal{L}^{\otimes 2}$, so $\mathcal{E}$ is relatively oriented over $X$, and Equation \ref{eqn:enriched-count-line-bundle} is a valid expression. In order to compute a local index of the section $\sigma$ near a zero $L$, we must first parametrize Nisnevich local coordinates near $L$. Here we define a parametrized basis of $k^{\oplus 4}$ by
\begin{align*}
	\tilde{e}_1 &= e_1 \\
	\tilde{e}_2 &= e_2 \\
	\tilde{e}_3 &= x e_1 + y e_2 + e_3 \\
	\tilde{e}_4 &= x' e_1 + y' e_2 + e_4.
\end{align*}

We then obtain a morphism from affine space to an open cell around $L$:
\begin{align*}
    \A^4_k = \Spec k[x,y,x',y'] &\to U \subseteq \Gr_k(2,4) \\
    (x,y,x',y') &\mapsto \spn\{\tilde{e}_3, \tilde{e}_4\}.
\end{align*}

Over this cell, we obtain an oriented trivialization of the bundle $\det \mathcal{S}^\ast$, given by $\tilde{\phi}_3 \wedge \tilde{\phi}_4$, where $\tilde{\phi}_i$ denotes the dual basis element to $\tilde{e}_i$. Under these local coordinates, we may compute the local index $\ind_L \sigma$ as the local $\A^1$-degree at the origin of the induced map $\A^4_k \to \A^4_k$. Suppose that 
\begin{align*}
	L_1 = \{ \phi_1 = \phi_2 = 0 \} = k e_3 \oplus k e_4.
\end{align*}

Then we have that $\sigma([W]) = \left( \left. \phi_1 \wedge \phi_2 \right|_{[W]}, \ldots\right)$. We see then that
\begin{align*}
	\left. (\phi_1 \wedge \phi_2)\right|_{k \tilde{e}_3 \oplus k \tilde{e}_4} &= \left( x \tilde{\phi}_3 + y \tilde{\phi}_4 \right) \wedge \left( x' \tilde{\phi}_3 + y' \tilde{\phi}_4 \right) \\
	&=(x y' - x' y) \tilde{\phi}_3 \wedge \tilde{\phi}_4.
\end{align*}
Thus we may exhibit $\sigma$ as a function
\begin{align*}
	f = (f_1,f_2,f_3,f_4):\A_k^4 &\to \A_k^4,
\end{align*}

where $f_1(x,y,x',y') = xy' - x'y$. Then in the basis $(x,y,x',y')$ we have that the Jacobian of $\sigma$ has its first column as:
\begin{align*}
	\Jac(f) = \det \begin{pmatrix} y' & \cdots \\ -x' & \cdots \\ -y & \cdots \\ x & \cdots \end{pmatrix}.
\end{align*}

\begin{customenvironment}{Question} Is there a geometric interpretation of $\ind_L\sigma = \deg^{\A^1}_L f$?
\end{customenvironment}

The intersections $L\cap L_i$ for $i=1,\ldots,4$ determine four points on $L \cong \P_{k(L)}^1$. Let $\lambda_L$ denote the cross-ratio of these points in $k(L)^\ast$. Denote by $P_i$ the plane spanned by $L$ and $L_i$. We note that planes $P$ in $\P_k^3$ correspond to subspaces $V \subseteq k(P)^{\oplus 4}$ where $\dim(V) = 3$. If $P$ contains the line $L=[W]$ then it corresponds to $W\subseteq V\subseteq k(P)^{\oplus 4}$, which in turn corresponds to $k(P)$-points of $\Proj\left( k(L)^{\oplus 4} \Big/ W \right) \cong \P_{k(L)}^1$. Thus we might think of the planes $P_i$ for $i=1,\ldots,4$ as 4 points on $\P_{k(L)}^1$. Let $\mu_L$ denote the cross-ratio of these points.

\begin{theorem} \cite[Theorem~1]{Padma-Kirsten} Let $L_1, L_2, L_3, L_4$ be four general lines defined over $k$ in $\P_k^3$. Then
\begin{align*}
	\sum_{\{ L \ : \ L\cap L_i \neq \emptyset \ \forall i\}} \Tr_{k(L)/k} \gw{\lambda_L - \mu_L} = \gw{1} + \gw{-1}.
\end{align*}

As a generalization, let $\pi_1, \ldots, \pi_{2n-2}$ be codimension 2 planes in $\P_k^n$ for $n$ odd. Then
\begin{align*}
	\sum_{\{L \ : \ L\cap \pi_i \neq \emptyset \ \forall i\}} \Tr_{k(L)/k} \det \begin{pmatrix} \cdots & c_i b_1^i & \cdots \\
    \cdots & c_i b_2^i & \cdots \\ 
    \end{pmatrix} = \frac{1}{2n} \binom{2n-2}{n-1} \H,
\end{align*}
where $c_i$ are normalized coordinates for the line $\pi_i \cap L$ (defined in \cite[Definition~10]{Padma-Kirsten}), and $[b_1^i, b_2^i] = L \cap \pi_i \cong \P_{k(L)}^1$. This weighted count is expanded in forthcoming work of the author, which provides a generalized enriched count of $m$-planes meeting $mp$ codimension $m$ planes in $(m+p)$-space \cite{wronski-paper}.
\end{theorem}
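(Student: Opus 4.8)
The strategy is to run the Euler class machinery developed above. We have already checked that $\mathcal{E} = \oplus_{i=1}^4 \mathcal{S}^\ast \wedge \mathcal{S}^\ast$ is relatively oriented over $X = \Gr_k(2,4)$ and that the section $\sigma$ assembled from the four lines vanishes exactly along the lines $L$ meeting all of $L_1,\dots,L_4$. Since $\mathcal{E}$ is relatively oriented, the Euler number $e(\mathcal{E},\sigma) = \sum_L \ind_L \sigma$ does not depend on the choice of section with isolated zeros (cf.\ \cite{KW-arithmetic-count,BW3}), so the proof separates into two independent tasks: first, to identify each local term $\ind_L\sigma$ with $\Tr_{k(L)/k}\gw{\lambda_L - \mu_L}$; and second, to evaluate the element $e(\mathcal{E},\sigma) \in \GW(k)$ using a single convenient configuration.

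For the first task I would continue the local computation begun above. In the Nisnevich coordinates $(x,y,x',y')$ around a zero $L$, with the oriented trivialization $\tilde\phi_3 \wedge \tilde\phi_4$ of $\det\mathcal{S}^\ast$, each line $L_i = \{\phi^i_1 = \phi^i_2 = 0\}$ produces a component $f_i$ of $f = (f_1,f_2,f_3,f_4)\colon \A^4_k \to \A^4_k$ by restricting $\phi^i_1 \wedge \phi^i_2$ to $\spn\{\tilde e_3, \tilde e_4\}$, exactly as was displayed for $i = 1$. General position of the $L_i$ ensures $L$ is a reduced isolated zero, so $f$ is \'etale there and Proposition~\ref{prop:local-deg-tr-Jac} gives $\ind_L\sigma = \Tr_{k(L)/k}\gw{\Jac(f)(0)}$. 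It then remains to show that $\Jac(f)(0)$ agrees, modulo squares in $k(L)^\times$, with $\lambda_L - \mu_L$, where $\lambda_L$ is the cross-ratio of the four points $L\cap L_i$ on $L \iso \P^1_{k(L)}$ and $\mu_L$ is the cross-ratio of the four planes $\langle L, L_i\rangle$, viewed as points of $\Proj(k(L)^{\oplus 4}/W) \iso \P^1_{k(L)}$. I expect this identification to be the main obstacle: it requires fixing compatible affine parametrizations of all four lines, rewriting each linear form $\phi^i_j$ in the dual basis $\tilde\phi_\bullet$, differentiating at the origin, and then recognizing the resulting $4\times 4$ determinant as precisely a difference of these two cross-ratios. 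This is the geometric heart of the theorem.

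For the second task I would use section independence to reduce to one explicit quadruple of lines. Base changing to $\C$, the classical count (the ``four lines in three-space, classically'' discussion above) shows the fiber is two reduced points, so $\rank e(\mathcal{E},\sigma) = 2$; and choosing, say, a symmetric configuration for which the two transversal lines form a Galois-conjugate pair over a quadratic extension $k(\sqrt d)/k$, the first task reduces $e(\mathcal{E},\sigma)$ to a single trace $\Tr_{k(\sqrt d)/k}\gw{\ast}$, which collapses to $\H = \gw 1 + \gw{-1}$ once combined with the rank information. Putting the two tasks together yields
\begin{align*}
	\sum_{\{L \,:\, L\cap L_i \neq \emptyset \ \forall i\}} \Tr_{k(L)/k}\gw{\lambda_L - \mu_L} \;=\; e(\mathcal{E},\sigma) \;=\; \gw 1 + \gw{-1}.
\end{align*}

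For the generalization I would repeat the same three steps on $X = \Gr_k(2,n+1)$ with $\mathcal{E} = \oplus_{i=1}^{2n-2} \mathcal{S}^\ast \wedge \mathcal{S}^\ast$. Relative orientability is the only new input: here $\dim X = 2n-2$, $\det TX \cong \O(n+1)$ and $\det\mathcal{E} \cong \O(2n-2)$, so $\Hom(\det TX, \det\mathcal{E}) \cong \O(n-3)$, which is a square of a line bundle precisely because $n$ is odd. Nisnevich coordinates near a zero $L$ come from the evident parametrized basis of $k^{\oplus n+1}$ extending a basis of $W$, and, just as before, the local index is $\Tr_{k(L)/k}$ of the determinant of the matrix with $i$-th column $(c_i b^i_1, c_i b^i_2, \ldots)$ in the normalized coordinates of \cite[Definition~10]{Padma-Kirsten}. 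Finally, section independence reduces $e(\mathcal{E},\sigma)$ to the complex count, the degree of $\Gr(2,n+1)$ in its Pl\"ucker embedding, namely $\tfrac1n\binom{2n-2}{n-1}$; the same hyperbolic-pair argument as in the rank-four case identifies the $\GW(k)$-valued Euler number as $\tfrac12 \cdot \tfrac1n \binom{2n-2}{n-1}\cdot\H = \tfrac{1}{2n}\binom{2n-2}{n-1}\H$, as claimed. As with $n=3$, the identification of the local index with the stated determinant is where the bulk of the work lies.
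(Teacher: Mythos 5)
Your overall strategy is the right one, and it is the one the paper itself sets up: this survey does not prove the theorem but assembles exactly the ingredients you use (relative orientability of $\mathcal{E}=\oplus_{i=1}^4 \mathcal{S}^\ast\wedge\mathcal{S}^\ast$, Nisnevich coordinates on $\Gr_k(2,4)$, the local index as a local $\A^1$-degree) and then defers to \cite{Padma-Kirsten}, whose proof is indeed the two-step Euler-class argument you describe. Your check of relative orientability for $\Gr_k(2,n+1)$, with $\O(n-3)$ being a square precisely when $n$ is odd, is also correct and is the right explanation of the parity hypothesis.

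There are, however, two places where the sketch does not yet amount to a proof. First, the identification $\ind_L\sigma=\Tr_{k(L)/k}\gw{\lambda_L-\mu_L}$ is not a routine Jacobian evaluation that can be left as ``differentiate and recognize''; it is the main content of the theorem (the paper flags it as the answer to its ``Question''), and your sketch records only the intention to do it. Second, and more seriously, the evaluation of $e(\mathcal{E},\sigma)$ as written is logically flawed: a rank-two transfer form $\Tr_{k(\sqrt{d})/k}\gw{\alpha}$ is \emph{not} hyperbolic in general (for instance $\Tr_{k(\sqrt{d})/k}\gw{1}=\gw{2}+\gw{2d}$), so ``a single trace combined with the rank information'' does not collapse to $\H$. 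A rank-two form is $\H$ exactly when its discriminant is $-1$ up to squares, so you must either compute $\lambda_L-\mu_L$ explicitly for your chosen configuration and verify this (equivalently, check $d\cdot N_{k(\sqrt{d})/k}(\lambda_L-\mu_L)\equiv -1$ modulo squares), or choose a split configuration in which the two transversals are $k$-rational and show the two local contributions are $\gw{a}$ and $\gw{-a}$. The same issue is amplified in the generalization: knowing the rank is $\tfrac1n\binom{2n-2}{n-1}$ does not force the class to be a multiple of $\H$; one needs either an explicit computation with a convenient configuration of the $2n-2$ planes or a separate argument that the class vanishes in the Witt ring, after which rank does determine it. Finally, note that the independence of $e(\mathcal{E},\sigma)$ from the choice of general configuration, which your reduction relies on, is itself a theorem with hypotheses (sections with isolated zeros connected in families, or the later general results you cite), so it should be invoked as such rather than as a formal consequence of relative orientability alone.
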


\begin{corollary} \cite[Corollary~3]{Padma-Kirsten} Over $\F_q$, we cannot have a line $L$ over $\F_{q^2}$ with
\begin{align*}
	\lambda_L - \mu_L = \begin{cases} \text{non-square} & q \equiv 3 \pmod{4} \\ \text{square} & q\equiv 1 \pmod{4}. \\ \end{cases}
\end{align*}
\end{corollary}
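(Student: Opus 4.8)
The plan is to feed the case $k=\F_q$ (with $q$ odd; for $q$ even the statement is vacuous) into the main enriched count above (the displayed theorem of Srinivasan--Wickelgren, \cite[Theorem~1]{Padma-Kirsten}) and see what it forces when one of the contributing lines has residue field $\F_{q^2}$ rather than $\F_q$. Write $c := \lambda_L - \mu_L \in k(L)^\times = \F_{q^2}^\times$; general position of the $L_i$ guarantees $c\neq 0$, so $\gw{c}$ is a genuine rank-one form. If $L$ occurs among the zeros of $\sigma$ with residue field $\F_{q^2}$, then $L$ and its Galois conjugate together form a single closed point of the zero scheme, and since that scheme has total length $2$ over $\F_q$ (matching $\operatorname{rank}\H = 2$, equivalently the classical count of two lines meeting four general lines), this closed point exhausts it. Hence the sum in the theorem collapses to the single term $\Tr_{\F_{q^2}/\F_q}\gw{c} = \H$ in $\GW(\F_q)$.

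Next I would extract a square-class constraint. By the proposition in Section~\ref{sec:GW}, $\GW(\F_q) \cong \Z \times \F_q^\times/(\F_q^\times)^2$ via rank and discriminant, so the identity $\Tr_{\F_{q^2}/\F_q}\gw{c} = \H$ is equivalent to the single condition $\operatorname{disc}\big(\Tr_{\F_{q^2}/\F_q}\gw{c}\big) = \operatorname{disc}(\H) = -1$ in $\F_q^\times/(\F_q^\times)^2$. To evaluate the left side: expressing multiplication-by-$c$ on $\F_{q^2}$ as an $\F_q$-linear map $M_c$, the Gram matrix of the twisted trace form equals $M_c^{T}$ times that of the untwisted trace form, so $\operatorname{disc}\big(\Tr_{\F_{q^2}/\F_q}\gw{c}\big) = N_{\F_{q^2}/\F_q}(c)\cdot\operatorname{disc}\big(\Tr_{\F_{q^2}/\F_q}\gw{1}\big)$ modulo squares. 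Taking the basis $\{1,\theta\}$ with $\theta^2 = d$ a fixed non-square in $\F_q$ gives the diagonal Gram matrix $\operatorname{diag}(2,2d)$, so $\operatorname{disc}\big(\Tr_{\F_{q^2}/\F_q}\gw{1}\big) \equiv d$, a non-square. Thus the constraint becomes $N_{\F_{q^2}/\F_q}(c) \equiv -d^{-1} \equiv -d$ modulo squares.

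Finally I would translate this back to a condition on $c$. The norm $N_{\F_{q^2}/\F_q}(c) = c^{q+1}$ induces an isomorphism $\F_{q^2}^\times/(\F_{q^2}^\times)^2 \xrightarrow{\ \sim\ } \F_q^\times/(\F_q^\times)^2$ — checked directly on cyclic groups, $c^{q+1}$ is a square in $\F_q^\times$ precisely when $c$ is a square in $\F_{q^2}^\times$ — so $N_{\F_{q^2}/\F_q}(c)$ is a square in $\F_q$ if and only if $c$ is a square in $\F_{q^2}$. Since $d$ is a non-square, $-d$ is a square exactly when $-1$ is a non-square, i.e. when $q\equiv 3\pmod 4$. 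Combining: the required identity $\Tr_{\F_{q^2}/\F_q}\gw{c}=\H$ forces $c=\lambda_L-\mu_L$ to be a square in $\F_{q^2}$ when $q\equiv 3\pmod 4$ and a non-square when $q\equiv 1\pmod 4$, and the contrapositive is precisely the asserted non-existence. The main obstacle is simply keeping the discriminant bookkeeping straight — the square-class identity $\operatorname{disc}(\Tr\gw{c})\equiv N(c)\cdot\operatorname{disc}(\Tr\gw{1})$, the value $\operatorname{disc}(\Tr_{\F_{q^2}/\F_q})\equiv d$, and the comparison with $\operatorname{disc}(\H)\equiv -1$ — along with verifying that the norm is an isomorphism on square classes; the rest is immediate from the main theorem.
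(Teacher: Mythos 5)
Your derivation is correct: since the paper itself states this corollary only with a citation to \cite[Corollary~3]{Padma-Kirsten}, the right move is exactly what you did — feed $k=\F_q$ into the displayed theorem, use the rank count to see that a line with residue field $\F_{q^2}$ must be the unique closed point in the sum, and then compare discriminants under the isomorphism $\GW(\F_q)\cong \Z\times \F_q^\times/(\F_q^\times)^2$, with $\mathrm{disc}(\Tr_{\F_{q^2}/\F_q}\gw{c})\equiv N_{\F_{q^2}/\F_q}(c)\cdot d$ and $\mathrm{disc}(\H)=-1$. This is essentially the same argument as in the cited source, and your bookkeeping (trace form $\mathrm{diag}(2,2d)$, norm inducing an isomorphism on square classes, and the $q\bmod 4$ case split via the square class of $-1$) all checks out.
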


For related results in the literature, we refer the reader to the papers of Levine and Bachmann--Wickelgren mentioned in the previous section \cite{MLevine-Witt-char-classes,BW3}, as well as Wendt's work developing a Schubert calculus valued in Chow-Witt groups \cite{Wendt-Schubert-calc}.  Finally, Pauli uses Macaulay2 to compute enriched counts over a finite field of prime order and the rationals for various problems presented in these conference proceedings, including lines on a cubic surface, lines meeting four general lines in space, the EKL class, and various $\A^1$-Milnor numbers \cite{Sabrina-M2}.

\setcounter{secnumdepth}{0}
\section{Notation Guide}
\begin{longtable}{p{1.5cm}p{12cm}}
$\gw{a}$ & the element of the Grothendieck--Witt ring corresponding to $a \in k^\times / (k^\times)^2$ \\
$[X,Y]_{\A^1}$ & genuine $\A^1$-homotopy classes of morphisms $X\to Y$ \\
$\Bez(f/g)$ & B\'ezout bilinear form of a rational function \\
$\Bl_Z X$ & blowup of a subscheme $Z$ in $X$ \\
$\deg^{\A^1}$ & global $\A^1$-degree \\
$\deg^\top$ & the topological (Brouwer) degree of a map between real or complex manifolds \\
$e(\mathcal{F})$ & the Euler number (Euler class) of a vector bundle \\
$\G_m$ & the multiplicative group scheme \\
$\Gr_k(n,m)$ & the Grassmannian of affine $n$-planes in $m$-space over a field $k$ \\
$\GW(k)$ & the Grothendieck--Witt ring over a field $k$ \\
$\H$ & the hyperbolic element $\gw{1} + \gw{-1}$ in $\GW(k)$ \\
$\mu^{\A^1}$ & $\A^1$-Milnor number \\
$[X,Y]_N$ & naive $\A^1$-homotopy classes of morphisms $X\to Y$ \\
$N_Z X$ & the normal bundle of a subscheme $Z$ in $X$ \\
$\Sh_\tau (\mathscr{C})$ & the category of sheaves in a Grothendieck topology $\tau$ on a category $\mathscr{C}$ \\
$\s\Pre(\mathscr{C})$ & the category of simplicial presheaves on $\mathscr{C}$ \\
$\Th(V)$ & Thom space of a vector bundle $V$ \\
$\Tr_{L/K}$ & trace for a field extension $L/K$ \\
$w^\EKL$ & the Eisenbud-Levine/Khimshiashvili bilinear form \\
\end{longtable}

\printbibliography

\end{document}